\newtheorem*{thm*}{Theorem}
\newtheorem{thm-defn}[thm]{Theorem/Definition}
\newtheorem{question}{Question}
\theoremstyle{definition}
\theoremstyle{remark}
\numberwithin{equation}{section}
\numberwithin{thm}{section}
\numberwithin{prop}{section}
\numberwithin{lem}{section}
\numberwithin{cor}{section}
\numberwithin{conj}{section}
\numberwithin{defn}{section}
\newcommand{\td}{\widetilde}
\newcommand{\F}{{\mathbb F}}
\renewcommand{\to}{\longrightarrow}
\newcommand{\comment}[1]{}
\newcommand {\from}{{\colon}}
\newcommand{\dual}[1]{#1^{\vee}}
\newcommand{\Spec}{{\text{\rm Spec}\,}}
\newcommand{\Bl}{{\text{\rm Bl}\,}}
\newcommand{\rk}{{\text{rk}\,}}
\newcommand {\Pic}{{\rm Pic}\,}
\DeclareMathOperator\Sym{Sym}
\DeclareMathOperator\ch{char}
\DeclareMathOperator\PGL{PGL}
\definecolor{purple}        {cmyk}{0.05,0.8,0,0}
\definecolor{adr}        {cmyk}{0.99,0.,0.,0.1}
\theoremstyle{plain}
\newtheorem{theorem}{Theorem}[section]
\newtheorem{proposition}[theorem]{Proposition}
\newtheorem{lemma}[theorem]{Lemma}
\newtheorem{corollary}[theorem]{Corollary}
\theoremstyle{definition}
\newtheorem{definition}[theorem]{Definition}
\newtheorem{remark}[theorem]{Remark}
\newtheorem{example}[theorem]{Example}
\newcommand\nc{\newcommand}
\nc\on{\operatorname}
\nc\renc{\renewcommand}
\newcommand\ssec{\subsection}
\newcommand\sssec{\subsubsection}
\newcommand\bp{{\mathbb P}}
\newcommand\bz{{\mathbb Z}}
\newcommand\so{{\mathscr O}}
\newcommand\scf{\mathscr F}
\newcommand\sch{\mathscr H}
\newcommand\sci{\mathscr I}
\newcommand\scl{\mathscr L}
\newcommand\scm{\mathscr M}
\newcommand\sco{\mathscr O}
\newcommand\scu{\mathscr U}
\newcommand\scv{\mathscr V}
\newcommand\scx{\mathscr X}
\newcommand\scy{\mathscr Y}
\newcommand\scz{\mathscr Z}
\newcommand \ra{\rightarrow}
\newcommand{\id}{\mathrm{id}}
\newcommand\im{\text{im }}
\newcommand \spec{\text{Spec }}
\newcommand \mg{{\overline{\mathscr M}_g}}
\newcommand \hilb[1]{\sch_{#1}}
\newcommand \uhilb[1]{\scv_{#1}}
\newcommand \ci[3]{\sch^{ci}_{#1, #2, #3}}
\newcommand \stor{\textit{Tor}}
\newcommand{\customlabel}[2]{%
   \protected@write \@auxout {}{\string \newlabel {#1}{{#2}{\thepage}{#2}{#1}{}} }%
   \hypertarget{#1}{#2}
}
\DeclareMathOperator\Hilb{Hilb}
\DeclareMathOperator\codim{codim}
\def\listtodoname{List of Todos}
\def\listoftodos{\@starttoc{tdo}\listtodoname}
\title{Interpolation Problems: Del Pezzo Surfaces}
\author{Aaron Landesman} \address{Dept. of Mathematics, Harvard University,
Cambridge, MA 02138 USA} 
\email{aaronlandesman@gmail.com}
\author{Anand Patel} \address{Dept. of
Mathematics, Boston College, Chestnut Hill, MA 02467 USA}
\email{anand.patel@bc.edu}
\date{\today}
\begin{document}

\maketitle

\begin{abstract}
We consider the problem of interpolating projective
varieties through points and linear spaces.
We show that del Pezzo surfaces satisfy weak interpolation.
\end{abstract}

\section{Introduction}
The question of interpolation is one of the most classical
questions in algebraic geometry. Indeed, it dates all the
way back to the ancients, starting with Euclid's postulate
that through any two points there passes a unique line.   The problem of interpolating a polynomial function $y = f(x)$ of degree $\leq n-1$ passing through $n$ general points in the plane was explicitly solved by Lagrange in the late 1700's. These examples should be considered the first two instances of an interpolation problem in the sense of this paper. 

In simple terms, an interpolation problem involves two pieces of data:
\begin{enumerate}
	\item A class $\sch$ of varieties in projective space (e.g. ``rational normal curves'') often specified by a component of a Hilbert scheme
	\item A collection of (usually linear) incidence conditions (e.g. ``passing through five fixed points and incident to a fixed $2$-plane'').
\end{enumerate}
The problem is then to determine whether there exists a variety $[X] \in \sch$ meeting a general choice of conditions of the specified type. 

More precisely,
suppose $U$ is an integral subscheme of the Hilbert scheme
parameterizing varieties of dimension $k$ in $\bp^n$.
Define $q$ and $r$ so that $\dim U = q \cdot (n- k) + r$.
We say $U$ {\bf satisfies interpolation} if for any collection
$p_1, \ldots, p_q, \Lambda$, where $p_i \in \bp^n$ are points
and $\Lambda \subset \bp^n$ is a plane of dimension
$n-k-r$, there exists some $[Y] \in U$ so that
$Y$ passes through $p_1, \ldots, p_q$ and meets $\Lambda$.
We say $U$ {\bf satisfies weak interpolation} if there exists
some $[Y] \in U$ meeting $q$ general points $p_q, \ldots, p_q$.
If $X$ is a projective variety lying on a unique
irreducible component of the Hilbert scheme, denoted
$\hilb X$, then we say $X$ satisfies interpolation if $\hilb X$ does.
Although this description of interpolation,
given in \autoref{theorem:equivalent-conditions-of-interpolation}\ref{interpolation-naive}
is the most classical one,
there are at least twenty two equivalent descriptions of interpolation
under moderate hypotheses, as we show later in \autoref{theorem:equivalent-conditions-of-interpolation}.

The first nontrivial case of interpolation in higher dimensional projective space is that rational normal curves satisfy interpolation,
meaning there is one through a general collection of $n + 3$ points in $\bp^n$, see subsection \ref{sssec:castelnuovos-lemma}.
Interpolation of higher genus curves in projective space is extensively
studied in 
\cite{stevens:on-the-number-of-points-determining-a-canonical-curve}, \cite{atanasovLY:interpolation-for-normal-bundles-of-general-curves}, \cite{atanasovLY:interpolation-for-normal-bundles-of-general-curves}, and \cite{larson:interpolation-for-restricted-tangent-bundles-of-general-curves}. We review interpolation
for rational curves and results of interpolation for higher genus curves in \autoref{section:lay-of-land} below.

Surprisingly, despite being such a  fundamental problem,
very little is known about interpolation
of higher dimensional varieties in projective space.  To our knowledge, the work of Coble in \cite{coble:associated-sets-of-points}, of
Coskun in \cite{coskun:degenerations-of-surface-scrolls}, and of Eisenbud and Popescu in
\cite[Theorem 4.5]{eisenbudP:the-projective-geometry-of-the-gale-transform} 
are the only places where a higher dimensional interpolation problem is addressed. In \cite[Theorem 1.1]{landesman:interpolation-of-varieties-of-minimal-degree}, the first author showed all varieties of minimal degree satisfy
interpolation.  
In this paper, we continue the study of interpolation problems for higher dimensional
varieties:
\begin{restatable}{theorem}{main}
	\label{theorem:main}
All del Pezzo surfaces satisfy weak interpolation.
\end{restatable}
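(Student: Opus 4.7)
The plan is to prove weak interpolation by induction on the anticanonical degree $d$ of the del Pezzo, using the classical description of a del Pezzo of degree $d \leq 8$ as a blowup of $\bp^2$ at $9 - d$ general points (together with the exceptional case $\bp^1 \times \bp^1$ of degree $8$). A preliminary calculation using $\dim U_d = h^0(N_{X/\bp^d})$ identifies the number $q_d$ of general points that weak interpolation asks a member of $U_d$ to contain.

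For the high-degree base cases $d = 9$ and $d = 8$, I would argue directly. The degree-$9$ del Pezzo is the image of the triple Veronese embedding $\bp^2 \hookrightarrow \bp^9$, while the degree-$8$ del Pezzos are the Segre-style embedding of $\bp^1 \times \bp^1$ in $\bp^8$ and a suitable embedding of the Hirzebruch surface $\mathbb{F}_1$. Each of these is rigid as an abstract surface and carries a large automorphism group, so weak interpolation reduces to finding a $\PGL$-translate of the standard embedding whose image contains a given set of general points, which can be handled by a direct parameter count in the style of classical Veronese interpolation.

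For the inductive step at $d \leq 7$, the idea is to construct a flat one-parameter family $\sch \to B$ over a smooth pointed curve $(B, t_0)$, equipped with $q_d$ disjoint sections $\sigma_i \colon B \to \bp^d$, such that for generic $t \in B$ the fiber $\sch_t \subset \bp^d$ is a smooth del Pezzo of degree $d$ with the $\sigma_i(t)$ being general points in $\bp^d$, while the special fiber $\sch_{t_0} = Y \cup_C Z$ is a reducible limit in which $Y \subset H \cong \bp^{d-1}$ is a degree-$(d-1)$ del Pezzo spanning a hyperplane and $Z$ is a surface of minimal degree meeting $Y$ along a smooth curve $C$, with the specialized points $\sigma_i(t_0)$ distributed between $Y$ and $Z$. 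The inductive hypothesis applied to $Y$, together with the main theorem of \cite{landesman:interpolation-of-varieties-of-minimal-degree} applied to $Z$, allows the sections to be arranged so that they lie on the appropriate components at $t_0$. Smoothability of $\sch_{t_0}$ into the component $U_d$, verified via the normal bundle sequence for a nodal union of surfaces, then yields the required smooth del Pezzo of degree $d$ passing through $q_d$ general points of $\bp^d$.

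The main obstacle is the combined construction-and-smoothing step: one must arrange $X_0 := \sch_{t_0}$ so that $[X_0]$ lies in the closure $\overline{U_d}$ of the correct Hilbert component, and verify that a smoothing can be chosen to preserve incidence with the marked sections. Both issues ultimately come down to controlling the normal bundle $N_{X_0/\bp^d}$ along the double curve $C$ and proving vanishing of an $H^1$ twisted by the ideal of the marked points. A secondary complication is the cases $d \leq 2$, where $-K_X$ is not very ample; these require choosing a suitable pluri-anticanonical embedding and adapting the inductive mechanism, or treating them by a separate direct argument.
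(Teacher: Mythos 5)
Your proposed induction on degree diverges sharply from the paper, and the divergence exposes a genuine gap rather than an alternative proof. The most serious issue is the treatment of the base cases $d = 8, 9$. You propose to handle these ``by a direct parameter count in the style of classical Veronese interpolation,'' but the paper itself warns --- via the genus-$4$ canonical curve example --- that a parameter count only provides a plausibility argument and is not a proof. In fact, the degree-$9$ del Pezzo (the $3$-Veronese) is explicitly flagged in the paper as ``by far the most technically challenging case,'' and its resolution requires the Gale transform, the notion of singular triads, and a delicate two-stage degeneration inside a blown-up family. Even the much simpler $2$-Veronese case (Coble's theorem) already needs real work; the $3$-Veronese is substantially harder, and a large automorphism group does not by itself give interpolation. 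Dismissing exactly the hardest cases as ``direct'' is not a workable plan.

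The inductive step also has unresolved problems beyond the smoothing and incidence-preservation issues you flag as ``the main obstacle.'' Note that the number of points $q_d$ required for weak interpolation is not monotone in $d$: from the paper's table it runs $19, 13, 11, 11, 11, 12, 13$ for $d = 3, \dots, 9$. If you degenerate a degree-$d$ del Pezzo in $\bp^d$ to $Y \cup_C Z$ with $Y$ a degree-$(d-1)$ del Pezzo spanning a hyperplane $H \cong \bp^{d-1}$, then $\deg Z = 1$, so $Z$ is a $2$-plane, which passes through only $3$ general points. But $q_{d-1} + 3 \neq q_d$ in every case (e.g.\ $q_4 + 3 = 16 \neq 11 = q_5$), so the naive distribution of the $q_d$ sections between the two components does not match, and the inductive hypothesis on $Y$ does not combine with interpolation for $Z$ in the way sketched. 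Finally, the worry about $d \leq 2$ is moot: the statement concerns del Pezzo surfaces embedded anticanonically in $\bp^d$, which forces $d \geq 3$, consistent with the paper's table.

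For comparison, the paper's actual proof is not an induction at all: degrees $3,4$ are balanced complete intersections; degree $5$ is found inside a $3$-fold scroll through $11$ points; degrees $6$ and $8$ (type $0$) are built up from a genus-$3$ (resp.\ genus-$2$) curve through the points; and degrees $9$, $8$ (type $1$), and $7$ use association and singular triads. The hard work is concentrated exactly where your proposal waves its hands.
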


In many ways, the del Pezzo surfaces are a natural next
class of varieties to look at. First, as mentioned earlier, varieties of
minimal degree were shown to satisfy interpolation
in the first author's \autoref{thm:stronginterpscroll}. Del Pezzo surfaces
are surfaces of degree $d$ in $\bp^d$, one higher
than minimal. Further, all irreducible surfaces of
degree $d$ in $\bp^d$ are either del Pezzo surfaces,
projections of surfaces of minimal degree, or cones
over elliptic curves, by \cite[Theorem 2.5]{coskun:the-enumerative-geometry-of-del-Pezzo-surfaces-via-degenerations}. So, del Pezzo constitute all 
linearly normal smooth varieties of
degree $d$ in $\bp^d$.
By analogy, all curves of degree $d-1$ in $\bp^d$
(also one more than minimal degree) have been shown
to satisfy interpolation in \cite[Theorem 1.3]{atanasovLY:interpolation-for-normal-bundles-of-general-curves}.
Second, since del Pezzos are the only Fano surfaces, they can
be viewed as the surface analogue of rational curves, which are already
known to satisfy interpolation.

\ssec{Relevance of Interpolation}

Before detailing what is currently known about interpolation,
we pause to describe several ways in which interpolation
arises in algebraic geometry.

First, interpolation arises naturally when studying families of varieties.  As an example, we consider the problem of producing {\sl moving curves} in the moduli space of genus $g$ curves, 
$\mg$. 
Suppose we know, for example, that canonical (or multi-canonical) curves
satisfy interpolation through a collection of points
and linear spaces. Then, after imposing the correct number of incidence conditions, one obtains a moving curve in $\mg$.
Indeed, as one varies the incidence conditions, these
curves sweep out a dense open set in $\mg$, and hence
determine a moving curve.
One long-standing open problem in this area  is that of determining the least upper bound for
the slope $\delta/\lambda$ of a moving
curve in $\mg$.
In low genera, moving
curves constructed via interpolation realize the least upper bounds. Establishing interpolation is a necessary first step
in the construction of such moving curves. For a more in depth discussion of slopes, see \cite[Section 3.3]{chenFM:effective-divisors-on-moduli-spaces-of-curves-and-abelian-varieties}.
This application is also outlined in the second
and third
paragraphs of \cite{atanasov:interpolation-and-vector-bundles-on-curves}.

We next provide an application of interpolation to the problems in Gromov-Witten theory.
Gromov-Witten theory can be used to count the number of curves satisfying incidence or tangency conditions.
Techniques in interpolation can also be used to count this number,
and we now explain how interpolation techniques can sometimes
lead to solutions where Gromov-Witten Theory fails.
When the Kontsevich space is irreducible and of the correct dimension 
one can employ Gromov-Witten theory without too much
difficulty to count the number of varieties meeting a certain
number of general points. 
In more complicated cases, one needs a virtual fundamental class,
and then needs to find the contributions of this
virtual fundamental class from nonprincipal components
and subtract the contributions from these components.
However, arguments in interpolation can very often be used
to count the number of varieties containing a general set of points,
as is done for surface scrolls in \cite[Results, p.\ 2]{coskun:degenerations-of-surface-scrolls}.
Coskun's technique also allows one to efficiently compute
Gromov-Witten invariants for curves in $\mathbb G(1,n)$.
Although there was a prior algorithm to compute this using
Gromov-Witten theory, Coskun notes that his method
is exponentially faster. The
standard algorithm, when run on Harvard's MECCAH cluster
``took over four weeks to determine the cubic invariants
of $\mathbb G(1,5)$. The algorithm we prove here allows
us to compute some of these invariants by hand'' \cite[p.\ 2]{coskun:degenerations-of-surface-scrolls}.
 
 Interpolation also distinguishes components of Hilbert schemes. For a typical example of this phenomenon,
consider the Hilbert scheme of twisted cubics in $\bp^3$.
This connected component of the Hilbert scheme has two
irreducible components.
One of these components has general member which is a smooth
rational normal curve in $\bp^3$ and is $12$ dimensional.
The other component
has general member corresponding to the union of a plane
cubic and a point in $\bp^3$, which is $15$ dimensional.
While the component of rational normal curves satisfies interpolation
through $6$ points, the other component doesn't 
even pass through $5$ general points, despite having
a larger dimension than the component parameterizing
smooth rational normal curves.

\ssec{Interpolation: a lay of the land}\label{section:lay-of-land}

\sssec{Rational normal curves}
\label{sssec:castelnuovos-lemma}

Interpolation holds for rational normal curves.
This is precisely the well known fact that through $r+3$ general points in $\bp^{r}$ there exists a unique rational normal curve $\bp^{1} \subset \bp^{r}$.  A dimension count provides evidence for existence: the (main component of the) Hilbert scheme of rational normal curves is $r^{2}+2r-3 = (r+3)(r-1)$ dimensional, and the requirement of passing through a point imposes $r-1$ conditions on rational normal curves. Therefore we {\sl expect} finitely many rational normal curves through $r+3$ general points.  
``Counting constants'' as above  only provides a plausibility argument for existence of rational curves interpolating through the required points -- it is not a proof. To illustrate this, we give an example where interpolation is not satisfied, even though the dimension count says otherwise.

\begin{example}
	\label{example:}
	A parameter count suggests there should be a genus $4$ canonical curve through $12$ general points in $\bp^{3}$: The dimension of the Hilbert scheme of canonical curves is $\dim \mathscr M_4 + \dim Aut(\bp^3) = 3 \cdot 4 - 3 + 4^2 - 1 = 24$ and each point imposes two conditions on a curve in $\bp^3$, so that
	we expect a $0$ dimensional family through $12 = 24/2$ points. However, such a canonical
	curve is a complete intersection of a quadric and a cubic.
	Since
	a quadric is determined by $9$ general points,
	the curve,
	which lies on the quadric, cannot contain $12$ general points.
	In other words, canonical genus $4$ curves do not satisfy interpolation.
\end{example}

There are many proofs that there is a unique rational normal curve through $r + 3$ points in $\bp^r$.  One proof proceeds by directly constructing a rational normal curve using explicit equations.  Another approach is via a degeneration argument, as in \autoref{example:degeneration-rnc}. One can also use {\sl association} (see \cite{eisenbudP:the-projective-geometry-of-the-gale-transform}) to deduce the lemma.  A purely synthetic proof also exists, as is found in \cite[Proposition 2.4.4]{pereiraP:an-invitation-to-web-geometry}.
\subsubsection{Higher genus curves} One way to generalize interpolation
for rational normal curves is to consider higher genus curves in projective space. For many reasons it is simpler to consider curves embedded via nonspecial linear systems.  
Interpolation for arbitrary rational curves, not just rational normal curves, was proven in \cite{sacchiero:normal-bundles-of-rational-curves-in-projective-space},
and later independently proven in \cite{ran:normal-bundles-of-rational-curves-in-projective-spaces}.
Hence, it is natural to ask whether curves of higher genus satisfy interpolation.
The related question of semistability for curves of genus 1 was explored in
\cite{einL:stability-and-restrictions-of-picard-bundles},
which was later used in
\cite[Theorem 1]{ballico2014interpolation} to prove that elliptic normal curves satisfy interpolation.

Around the same time, it was shown in \cite[Theorem 7.1]{atanasov:interpolation-and-vector-bundles-on-curves} that nonspecial curves, apart from those of genus $2$ and degree $5$, in $\mathbb P^3$ satisfy interpolation.
This was generalized from $\mathbb P^3$ to projective spaces of arbitrary dimension in the following comprehensive recent result of Atanasov--Larson--Yang:

\begin{theorem}[Theorem 1.3, \cite{atanasovLY:interpolation-for-normal-bundles-of-general-curves}]\label{thm:NaskoEricDavid}
Strong interpolation holds for the main component of the Hilbert scheme parameterizing nonspecial curves of degree $d$, genus $g$ in projective space $\bp^{r}$, with $d \geq g + r$ unless
\begin{align*}
	(d,g,r) \in \left\{ (5,2,3), (6,2,4), (7,2,5) \right\}.
\end{align*}
\end{theorem}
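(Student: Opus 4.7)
The plan is to translate strong interpolation into a cohomological property of the normal bundle. For a general smooth nonspecial curve $C \subset \bp^r$ of degree $d$ and genus $g$, strong interpolation through $q$ points and a linear space $\Lambda$ of the appropriate codimension is equivalent to certain vanishing statements for twists $N_{C/\bp^r}(-p_1-\cdots-p_q)$. Since $C$ is nonspecial ($d \geq g+r$), one expects $N_{C/\bp^r}$ to be ``as balanced as possible,'' so the first step is to reduce \autoref{thm:NaskoEricDavid} to proving that for a general such $C$ and general divisor $D$ of the appropriate degree, $H^1(C, N_{C/\bp^r}(-D)) = 0$ (plus a companion $H^0$ surjectivity statement accounting for $\Lambda$).

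The main engine would be induction on the triple $(d,g,r)$, degenerating to a reducible curve in order to reduce invariants. Base cases are covered by the rational curve theorems of Sacchiero and Ran, together with the elliptic normal curve result of Ein--Lazarsfeld / Ballico. For the inductive step, I would specialize $C$ to a nodal curve $C_0 = C' \cup_p L$ where $L$ is a line or rational curve meeting a nonspecial curve $C'$ of strictly smaller degree or genus (still within the range $d \geq g + r$) at a single general point $p$. The normal bundle $N_{C_0}$ sits in a short exact sequence relating it to $N_{C'}|_{C'}$, $N_L|_L$, and the tangent-at-node data; by distributing the interpolation points between $C'$ and $L$ and invoking the inductive hypothesis on each component, one obtains the desired vanishing for $N_{C_0}(-D)$. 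Semicontinuity of cohomology then propagates the vanishing to a general smoothing.

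A crucial technical device is the \emph{elementary modification} (Hartshorne--Hirschowitz style): one twists $N_{C/\bp^r}$ along subschemes supported at the node $p$ to move between ``too positive on $C'$'' and ``too negative on $L$'' and realize the cohomology as a balanced quantity. The combinatorial heart of the argument is tracking, through the induction, how many interpolation points must be absorbed by each component at each step, and which elementary modifications are permitted. This is where one must avoid terminating the induction on the three exceptional triples $(5,2,3), (6,2,4), (7,2,5)$: each of these corresponds to a canonically embedded situation in which the curve is forced to lie on a low-degree hypersurface determined by the points (in the spirit of the $g=4$ canonical curve \autoref{example:}), so any inductive scheme has to be chosen to bypass them.

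The main obstacle, and where the real work of \cite{atanasovLY:interpolation-for-normal-bundles-of-general-curves} lies, is carrying out the bookkeeping so that the inductive degeneration simultaneously (i) stays in the nonspecial range, (ii) keeps the interpolation count balanced between the two components at every node, and (iii) never lands on one of the three exceptional triples as a base case. A secondary difficulty is upgrading weak interpolation to strong interpolation (incidence with $\Lambda$), which requires showing not just $H^1$ vanishing of $N_C(-D)$ but the stronger statement that $H^0(N_C)$ surjects onto the appropriate quotient accounting for $\Lambda$; this is handled by an additional modification absorbing the linear-space condition into one more point on a rational component.
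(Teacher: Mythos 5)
This statement is not proved in the paper at all: \autoref{thm:NaskoEricDavid} is imported verbatim as Theorem~1.3 of \cite{atanasovLY:interpolation-for-normal-bundles-of-general-curves}, so there is no internal argument to compare yours against. Judged as a proof in its own right, your proposal has a genuine gap: it is a strategy summary, not an argument. You correctly name the ingredients of the Atanasov--Larson--Yang approach (reduction to a cohomological statement about $N_{C/\bp^r}$ twisted down by points, degeneration to reducible nodal curves, elementary modifications in the style of Hartshorne--Hirschowitz, induction on $(d,g,r)$ with rational and elliptic base cases), but the entire content of the theorem lives precisely in the steps you defer: the exact inductive statement about modified normal bundles, the rules for distributing points and modifications between the two components at a node, the verification that the degenerate curve's modified bundle actually has the required cohomology vanishing, and the check that the induction never needs one of the excluded triples. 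Saying that this bookkeeping ``is where the real work lies'' concedes that the proof has not been given; semicontinuity plus ``induction on each component'' is not yet a proof scheme until those statements are pinned down, and naive distributions of points between $C'$ and $L$ are exactly what fails without the elementary-modification calculus being made precise.

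Two further inaccuracies are worth flagging. First, your reduction conflates interpolation with \emph{strong} interpolation: the theorem asserts $\lambda$-interpolation for every admissible sequence of linear spaces, not merely passage through $q$ points plus incidence to a single plane $\Lambda$; one needs the bundle-theoretic equivalence (interpolation of $N_{C/\bp^r}$ implies strong interpolation, as in conditions \ref{cohomological-definition} and \ref{cohomological-strong} of \autoref{theorem:equivalent-conditions-of-interpolation}, valid here in characteristic $0$) to make the reduction legitimate, and you should say so. Second, your explanation of the exceptional triples is wrong: $(5,2,3)$, $(6,2,4)$, $(7,2,5)$ are genus~$2$ curves, which have no canonical embedding, and the genuinely canonical exceptions such as $(6,4,3)$ and $(10,6,5)$ are already excluded by the nonspeciality hypothesis $d \geq g+r$; the failure in genus~$2$ comes from an unexpected destabilizing structure of the normal bundle tied to the degree-$2$ map to $\bp^1$, not from the curve lying on a low-degree hypersurface through the points.
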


It is also shown in 
\cite[p.\ 108]{stevens:deformations-of-singularities}
(which combines the work in ~\cite{stevens:on-the-number-of-points-determining-a-canonical-curve},
dealing with the canonical curves of genus not equal to $8$ and
~\cite[Proposition, p.\ 3715]{stevens:on-the-computation-of-versal-deformations},
dealing with canonical curves of genus 8)
that canonical curves of genus at least $3$ fail to satisfy weak interpolation if and only if their genus is $4$ or $6$.

In fact, the above leads to a complete classification of
whether Castelnuovo curves satisfy weak interpolation:
\begin{example}
	\label{example:}
	Castelnuovo curves of degree $d$ and genus $g$ in $\bp^r$
	satisfy weak interpolation if and only if
	$d \leq 2r$ and
	$(d,g,r) \notin \left\{ (5,2,3), (6,2,4), (7,2,5), (6,4,3), (10,6,5) \right\}$.
	Further, a Castelnuovo curve of degree $d$ and genus $g$
	in $\bp^r$ of degree not equal
	to $2r$ satisfies interpolation if and only if
	$d < 2r$ and $(d,g,r) \notin \left\{ (5,2,3), (6,2,4), (7,2,5) \right\}$.

	The proof of this statement is not difficult given the above results.
	When, $d < 2r$, the statement follows from
	\autoref{thm:NaskoEricDavid}.
	When $d = 2r$, it follows from 
\cite[p.\ 108]{stevens:deformations-of-singularities}.
Finally, to see that Castelnuovo curves of degree $d < 2r$, do not
satisfy weak interpolation, note that such a curve lies on a surface of minimal degree. However, if weak interpolation is equivalent to the Castelnuovo
curve passing through $n$ points, a dimension count shows that there will be such surface 
of minimal degree passing through $n$ points, and so there can be no
such Castelnuovo curve.
\end{example}

To summarize the above example, canonical curves approximately
``form the boundary'' between Castelnuovo curves satisfying
interpolation and 
Castelnuovo curves not satisfying interpolation.

\subsubsection{Higher dimensional varieties: Varieties of minimal degree}

Recent work of the first author \cite{landesman:interpolation-of-varieties-of-minimal-degree}, establishes interpolation for all varieties of minimal degree. Recall
that a variety of dimension $k$ and degree $d$ in $\bp^n$
is of minimal degree
if it is not contained in a hyperplane and
$d + k = n - 1$.
By \cite[Theorem 1]{eisenbudH:on-varieties-of-minimal-degree}, an irreducible variety is of minimal degree if and only if
it is a degree 2 hypersurface, the $2$-Veronese in $\bp^5$
or a rational normal scroll.

\begin{theorem}[Landesman, \cite{landesman:interpolation-of-varieties-of-minimal-degree}]\label{thm:stronginterpscroll}
Smooth varieties of minimal degree satisfy interpolation.
\end{theorem}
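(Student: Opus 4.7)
The plan is to use the Eisenbud--Harris classification to reduce to three families of smooth varieties of minimal degree: quadric hypersurfaces, the $2$-Veronese surface in $\bp^{5}$, and smooth rational normal scrolls $S(a_{1},\ldots,a_{k}) \subset \bp^{n}$ with $n = k-1+\sum a_{i}$. Each family is a single $\PGL_{n+1}$-orbit (or closure thereof) in the Hilbert scheme, so I only need to exhibit, for each family and each dimension count, a single variety in the family through a concrete general incidence configuration; transversality plus semicontinuity then propagates to the generic fiber.

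The quadric case is the classical statement that $\binom{n+2}{2}-1$ general points impose independent conditions on $H^{0}(\bp^{n},\sco(2))$, handled by an induction on $n$ together with the observation that a general codimension-$k$ plane already meets any quadric (so the auxiliary $\Lambda$, when $r>0$, contributes no obstruction beyond points). The $2$-Veronese case, which requires passing through $9$ general points in $\bp^{5}$, I would treat by specialization together with a direct normal-bundle computation, using that $N_{V/\bp^{5}}$ decomposes in a controlled way on $\bp^{2}$ and that the incidence conditions, pulled back along the Veronese embedding, become a general $0$-dimensional configuration in $\bp^{2}$.

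The main case is that of rational normal scrolls, which I would attack by induction on $d=\sum a_{i}$ and on $k$, via a flat degeneration to broken scrolls. Specifically, a general smooth scroll $X=S(a_{1},\ldots,a_{k})$ admits a flat degeneration to a reducible variety $X_{0}=X_{1}\cup_{Y} X_{2}$, where $X_{1}$ and $X_{2}$ are smooth scrolls of smaller numerical type meeting transversely along a common hyperplane section $Y$, itself a scroll of dimension $k-1$. Given a general incidence configuration $(p_{1},\ldots,p_{q},\Lambda)$, I would distribute the data between the two pieces, applying the inductive hypothesis to each $X_{i}$ to produce a broken scroll $X_{0}$ incident to all the data, and then smooth $X_{0}$ to a nearby smooth scroll passing through the configuration (now only approximately, but genericity of the original data suffices).

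The hard part is the normal-bundle bookkeeping that underwrites the smoothing step: I need to show that $H^{1}(N_{X_{0}/\bp^{n}}(-Z))=0$ where $Z$ denotes the scheme of incidence conditions, so that the broken scroll admits a first-order smoothing through the data. Via a Mayer--Vietoris sequence relating $N_{X_{0}/\bp^{n}}$ to $N_{X_{1}/\bp^{n}}$, $N_{X_{2}/\bp^{n}}$, and the normal crossings contribution along $Y$, this vanishing reduces to inductive vanishings for each piece together with a residual statement on $Y$. The delicate parts are (i) choosing the partition of points (and the way $\Lambda$ cuts each component) so that both inductive hypotheses apply simultaneously and the remainders agree along $Y$, and (ii) handling the base cases --- low-rank scrolls, including rational normal curves (known from \cite{sacchiero:normal-bundles-of-rational-curves-in-projective-space}) and small surface scrolls --- by explicit computation. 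Once the combinatorics of the partition is set up correctly and the base cases are in hand, the induction runs essentially automatically.
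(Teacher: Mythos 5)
First, a point of orientation: the paper does not prove \autoref{thm:stronginterpscroll} at all --- it is imported wholesale from the cited reference --- so your proposal can only be judged on its own merits; its general shape (Eisenbud--Harris classification, quadrics and the $2$-Veronese handled separately, scrolls by degeneration to reducible unions plus induction) is in the same family of techniques as the known proofs (Coskun-style scroll degenerations, association or normal-bundle arguments for the Veronese). The decisive gap is the smoothing step. You propose to put a broken scroll $X_0$ through the general configuration and then ``smooth $X_0$ to a nearby smooth scroll passing through the configuration (now only approximately).'' A smooth scroll passing approximately through the points proves nothing: interpolation is not established by approximate incidence, and genericity of the data cannot repair this. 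The correct logic runs in the other direction: one \emph{specializes} the configuration onto the reducible variety, shows that the resulting point of the incidence correspondence $\Phi$ is isolated in its fiber of $\pi_2$ (or, equivalently in the deformation-theoretic language you invoke, that $H^1(N_{X_0/\bp^n}\otimes \sci_Z)=0$ \emph{and} that deformations of the pair $(X_0,Z)$ include a smoothing direction), and then concludes surjectivity of $\pi_2$ from properness, irreducibility of $\Phi$, and the dimension count --- precisely conditions \ref{interpolation-isolated}/\ref{interpolation-pointed-isolated} of \autoref{theorem:equivalent-conditions-of-interpolation} together with \autoref{lemma:isolated-fiber-implies-dominant}. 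You correctly identify the twisted $H^1$ vanishing as ``the hard part,'' but that vanishing, the choice of specialization making it checkable, and the verification that the incidence conditions do not obstruct the smoothing are the entire content of the scroll case, and none of them is carried out; the Mayer--Vietoris bookkeeping you sketch does not by itself deliver the smoothability of $(X_0,Z)$.

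There are also two concrete inaccuracies in the setup. First, your degeneration cannot look the way you describe: if $X_1,X_2$ are nondegenerate minimal-degree $k$-folds of degrees $d_1,d_2$ spanning $\bp^{d_1+k-1}$ and $\bp^{d_2+k-1}$, and the union is to be a nondegenerate degree-$(d_1+d_2)$ limit in $\bp^{d_1+d_2+k-1}$, then the two spans meet in exactly a $\bp^{k-1}$, so $X_1\cap X_2$ is a ruling $(k-1)$-plane --- of degree $1$ --- and not ``a common hyperplane section'' (a hyperplane section of $X_i$ has degree $d_i$). This changes the normal-crossings term along $Y$ in your exact sequence, i.e. the very term your induction must control. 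Second, smooth scrolls of fixed dimension and degree but different splitting types (e.g. $S(1,3)$ and $S(2,2)$ in $\bp^5$) are not one $\PGL$-orbit; what you actually need, and what is true, is that they lie in a single irreducible component of the Hilbert scheme, so that one isolated-in-its-fiber point suffices. Smaller remarks: for quadric hypersurfaces $r=0$, so no auxiliary $\Lambda$ ever appears (cf.\ \autoref{lemma:balanced-complete-intersection}); and for the $2$-Veronese, interpolation of the normal bundle is strictly stronger than interpolation of the Hilbert component (it fails in characteristic $2$ while interpolation holds), so in characteristic $0$ your normal-bundle route is legitimate but must actually be computed --- alternatively one can use the elliptic-curve/association argument of \autoref{theorem:counting-veronese-interpolation}. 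As written, then, your text is a plausible plan whose decisive steps remain open rather than a proof.
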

\begin{remark}
	\label{remark:}
	Parts of ~\autoref{thm:stronginterpscroll}
	have been previously established. For example,
	the dimension $1$ case is covered in \ref{sssec:castelnuovos-lemma}.
	The Veronese surface was shown to satisfy interpolation
	in \cite[Theorem 19]{coble:associated-sets-of-points},
	see ~\autoref{ssec:2-veronese-interpolation}
	for a more detailed description of this proof.
	It was already established that $2$-dimensional
	scrolls satisfy interpolation in
	Coskun's thesis ~\cite[Example, p.\ 2]{coskun:degenerations-of-surface-scrolls}, and furthermore,
	Coskun gives a method for computing the number of scrolls
	meeting a specified collection of general linear spaces.
	Finally, weak interpolation was established for
	scrolls of degree $d$ and dimension $k$ with
	$d \geq 2k - 1$ in \cite[Theorem 4.5]{eisenbudP:the-projective-geometry-of-the-gale-transform}.
\end{remark}

\subsection{Approaches to interpolation}
There are at least three approaches to solving interpolation problems.  

The first approach is to directly construct a variety $[Y] \in \sch$ meeting the specified constructions.  This method is quite ad hoc: For one, we would need ways of constructing varieties in projective space.  Our ability to do so is very limited and always involves special features of the variety. For examples of this approach, see
\autoref{sec:degree-5}, \autoref{sec:degree-6}, and \autoref{sec:degree-8-type-0}.

The second standard approach is via specialization
and degeneration. In this approach,
we specialize the points to a configuration
for which it is easy to see there is an isolated point
of $\sch$ containing such a configuration.
Often, although not always, the isolated point of $\sch$
corresponds to
a singular variety.
Finding singular varieties may often be easier than
finding smooth ones, particularly if those singular
varieties have multiple components, because we may
be able to separately interpolate each of the components
through two complementary subsets of the points.
An instance of specialization, although in a slightly different context, can be found in
\autoref{ssec:existence-of-singular-triads}. Here is
a simpler example:

\begin{example}\label{example:degeneration-rnc}
	\label{example:}
	A simple example of specialization
and degeneration can be seen in proving
that there is a twisted cubic curve
through $6$ general
points in $\bp^3$. Start by specializing four of the five
points to lie in a plane. There are no smooth twisted
cubics through such a collection of points. However,
there is a singular twisted cubic, realized
as the union of a line and a plane conic through such a collection of points.

To see there is such a singular twisted cubic, note
that if we draw a line through the two non-planar points,
it will intersect the plane containing the four points
at a fifth point $p$. There will then be a unique conic
through $p$ and the $4$ planar points. The union of this
line and conic is a degenerate twisted cubic.
Omitting several technical details, this
curve ends up being isolated among curves in the irreducible component
of the Hilbert scheme of twisted cubics through this
collection of points, and hence twisted cubics
satisfy interpolation.
\end{example}

The third approach is via association, see
~\autoref{section:association} for more details on
what this means. The general picture
is that association determines a natural way of
identifying a set of $t$ points in $\bp^a$ with a collection of $t$ points
in $\bp^b$, up to the action of $\PGL_{a+1}$ on the first
and $\PGL_{b+1}$ on the second. 
Then, if one can find a certain variety through the $t$ points
in $\bp^a$, one may be able to use association to find
the desired variety through the $t$ associated points in
$\bp^b$. For an example of this approach, see
\autoref{ssec:2-veronese-interpolation} and \autoref{sec:degree-7-8-9}.

\subsection{Main results of this paper} 

Recall that a del Pezzo surface, embedded in $\bp^n$,
is a surface with ample anticanonical bundle, embedded
by the complete linear system of its
anticanonical bundle. All del Pezzo surfaces have
degree $d$ in $\bp^d$, and
all linearly normal smooth surfaces of degree $d$ in $\bp^d$ are del Pezzo
surfaces by
\cite[Theorem 2.5]{coskun:the-enumerative-geometry-of-del-Pezzo-surfaces-via-degenerations}.
We also know the dimension of the component of the Hilbert
scheme containing a del Pezzo surface from \cite[Lemma 2.3]{coskun:the-enumerative-geometry-of-del-Pezzo-surfaces-via-degenerations}, as given
in \autoref{table:del-Pezzo-conditions}, and that all del Pezzo
surfaces have $H^1(X, N_{X/\bp^n}) = 0$, by \cite[Lemma 5.7]{coskun:the-enumerative-geometry-of-del-Pezzo-surfaces-via-degenerations}.

Assuming the remainder of the paper, we now restate and prove
our main result.

\main*
\begin{proof}
Recall that there is a unique component of the Hilbert
scheme of del Pezzo surfaces in
degrees $3, 4, 5, 6, 7, 9$, and there are two in degree $8$.
One component in degree $8$, which we call type $0$,
has general member abstractly isomorphic to $\mathbb F_0 \cong \bp^1 \times \bp^1$.
The other component in degree $8$, which we call type $1$,
has general member abstractly isomorphic to $\mathbb F_1$.
The cases of degree $3$ and $4$ surfaces hold by
\autoref{lemma:balanced-complete-intersection}.
The case of degree $5$ del Pezzo surfaces is
\autoref{thm:quinticDPinterpolation}.
The case of degree $6$ del Pezzo surfaces is
\autoref{thm:sexticDPweakinterpolation}.
The case of degree $8$, type $0$
surfaces is ~\autoref{thm:weakinterpolationP1xP1}.
Finally, the three remaining cases of del Pezzo surfaces in
degrees $7,8,9$ are 
\autoref{corollary:degree-8-type-1-interpolation},
\autoref{corollary:degree-7-interpolation},
and
~\autoref{thm:3veroneseexistence},
respectively.
\end{proof}

\begin{table}
\begin{tabular}
	{| c || c | c | c |}
	\hline
	Degree & Dimension & Number of Points & Additional Linear Space Dimension, If Any \\
	\hline
	\hline

	$3$ & $19$ & $19$ & None \\	\hline

	$4$ & $26$ & $13$ & None \\	\hline
	
	$5$ & $35$ & $11$ & $1$ \\	\hline
	
	$6$ & $46$ & $11$ & $2$ \\	\hline
	
	$7$ & $59$ & $11$ & $1$ \\	\hline
	
	$8$, type 0 & $74$ & $12$ & $4$ \\	\hline
	
	$8$, type 1 & $74$ & $12$ & $4$ \\	\hline
	
	$9$ & $91$ & $13$ & None \\	\hline
\end{tabular}
\vspace{.5cm}
\caption{Conditions for del Pezzo surfaces to satisfy interpolation.
Type $0$ refers to the component of the Hilbert scheme whose
general member is a degree $8$ del Pezzo surface, isomorphic to $\mathbb F_0$,
(this also includes, in its closure, del Pezzo surfaces abstractly
isomorphic to $\mathbb F_2$,)
while type $1$ refers to those isomorphic to $\mathbb F_1$.
The dimension counts are proven in \cite[Lemma 2.3]{coskun:the-enumerative-geometry-of-del-Pezzo-surfaces-via-degenerations}.
}
\label{table:del-Pezzo-conditions}
\end{table}

\ssec{Organization of Paper}

The remainder of this paper is structured as follows:
We also
include a proof of the elementary fact that {\sl balanced} complete intersections satisfy interpolation.
In \autoref{sec:degree-5}, \autoref{sec:degree-6}, and \autoref{sec:degree-8-type-0}, we show del Pezzo surfaces of degree $5, 6$, and degree $8$, type $0$,
respectively,
satisfy weak interpolation. Our approach for
surfaces of degree $5,6$ and the degree $8$, type $0$ del Pezzo
surfaces is to find surfaces
through a collection of points by first finding a curve or threefold
containing the points and then a surface containing the curve or contained
in the threefold.
In \autoref{section:association}, we recall the technique
of association, in preparation for \autoref{sec:degree-7-8-9},
where we use association to prove weak interpolation of
del Pezzo surfaces of degree $7$, degree $8$, type 1, and degree $9$.  The degree $9$ del Pezzo surface case is by far the most technically challenging case in the paper. 
We were led to the approach of association after reading Coble's remarkable paper ``Associated Sets of Points" \cite{coble:associated-sets-of-points}. We discuss further questions and open problems in \autoref{sec:questions}.
Finally, in \autoref{sec:interpolation-in-general} we prove many distinct formulations of interpolation are equivalent. 

\subsection{Notation and conventions} We work over an algebraically closed field $k$ of characteristic zero, unless otherwise stated. We freely use the language of line bundles, divisor classes, and linear systems. When $V$ is a vector space of dimension $d$,
we sometimes write it as $V^d$ to indicate its dimension.

\subsection{Acknowledgments} We would like to thank Francesco Cavazzani, Izzet Coskun, Igor Dolgachev, Joe Harris, Brendan Hassett, J\'anos Koll\'ar,
Peter Landesman, Eric Larson, Rahul Pandharipande, and Adrian Zahariuc, and several anonymous referees for helpful conversations and correspondence.

\section{Degree 5 del Pezzos}

\label{sec:degree-5}
\begin{theorem}
	\label{thm:quinticDPinterpolation}
	Quintic del Pezzo surfaces satisfy
	weak interpolation.
\end{theorem}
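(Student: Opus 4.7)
By \autoref{table:del-Pezzo-conditions} the Hilbert scheme of quintic del Pezzo surfaces in $\bp^5$ has dimension $35$, and each point of $\bp^5$ imposes $3$ conditions, so $11$ general points should admit a $2$-dimensional family of quintic del Pezzos through them. Since $H^1(S, N_{S/\bp^5}) = 0$ by Coskun's lemma cited in the introduction, weak interpolation through $11$ general points is equivalent to finding \emph{some} pair $(S,(p_1,\ldots,p_{11}))$ with $p_i \in S$ such that the evaluation map
\[
H^0(S, N_{S/\bp^5}) \longrightarrow \bigoplus_{i=1}^{11} N_{S/\bp^5}|_{p_i}
\]
is surjective (compare \autoref{theorem:equivalent-conditions-of-interpolation}). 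Following the ``curve then surface'' outline from the introduction, I would exhibit such a configuration by specialization to a hyperplane.

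\textbf{Step 1 (Specialize).} Place $8$ of the $11$ points on a general hyperplane $H \cong \bp^4 \subset \bp^5$, keeping the remaining $3$ points general in $\bp^5$.

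\textbf{Step 2 (Find the curve).} A smooth hyperplane section of a quintic del Pezzo is an elliptic normal quintic, so it is natural to seek one through the $8$ specialized points. Since $(d,g,r) = (5,1,4)$ is not in the exclusion list of \autoref{thm:NaskoEricDavid}, elliptic normal quintics in $\bp^4$ satisfy strong interpolation, and their $25$-dimensional Hilbert scheme delivers a $1$-parameter family of elliptic normal quintics $E \subset H$ through the $8$ points.

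\textbf{Step 3 (Extend to the surface).} The heart of the argument is to show that the ``hyperplane section'' morphism
\[
\{(S,H) : S \text{ quintic del Pezzo}, H \text{ a hyperplane}\} \longrightarrow \{(E,H) : E \subset H \text{ elliptic normal quintic}\},
\]
of source dimension $35 + 5 = 40$ and target dimension $25 + 5 = 30$, is dominant and smooth of relative dimension $10$ at our specialization. This yields, for the general $E$ produced in Step 2, a $10$-dimensional family of quintic del Pezzos $S \subset \bp^5$ with $S \cap H = E$. I would verify this either by constructing extensions explicitly (each of the $5$ quadrics cutting out $E$ in $\bp^4$ lifts to $\bp^5$ modulo $x_0 \cdot (\text{linear})$, and a generic lift cuts out a quintic del Pezzo by a Plücker/Pfaffian identification with a linear section of $G(2,5)$), or by a cohomological computation using the exact sequence
\[
0 \longrightarrow N_{E/S} \longrightarrow N_{E/\bp^5} \longrightarrow N_{S/\bp^5}|_E \longrightarrow 0
\]
and vanishing of $H^1$ on the right.

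\textbf{Step 4 (Impose the remaining points).} Each of the $3$ points in $\bp^5 \setminus H$ imposes $3$ conditions on the $10$-dimensional extension family, so a dimension count yields an expected nonempty $1$-parameter family of quintic del Pezzos through all $11$ specialized points. Combined with the $1$-parameter family of choices of $E$ in Step 2, this produces an honest $2$-parameter family of pairs $(S,(p_i))$ mapping to our specialized configuration, matching the generic expected fiber dimension.

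\textbf{Step 5 (From specialization to generic).} Verify that the evaluation map above has rank $33$ at the constructed pair $(S,(p_1,\ldots,p_{11}))$ by identifying the tangent directions to the $2$-parameter family just built with the kernel of the evaluation map; the dimension count in Steps 2--4 is designed precisely so this matches. Surjectivity of the evaluation map at one point is open in the $11$-tuple, so weak interpolation for $11$ general points in $\bp^5$ follows.

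The main obstacle is \textbf{Step 3}, the extension from elliptic normal quintic in $H$ to quintic del Pezzo in $\bp^5$: making the relative dimension $10$ claim rigorous and non-empty for general $E$ requires either an explicit Plücker-style construction or careful bookkeeping with the normal bundle sequence and $H^1$-vanishing. Step 5 is technically delicate but formal once Step 3 is in hand.
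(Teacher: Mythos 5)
Your proof proposal takes a genuinely different route from the paper's. The paper places a degree-$3$ threefold scroll through $12$ general points (invoking the first author's interpolation theorem for varieties of minimal degree), restricts to a $2$-parameter family of scrolls through $11$ points, and then inside each scroll performs an explicit residuation: a quadric containing the $11$ points and a ruling $2$-plane $P$ cuts out $P$ together with a quintic del Pezzo. The paper's remark also sketches a curve-first variant closer in spirit to yours, but uses a genus-$6$ canonical curve in $\bp^5$, which is known (by Stevens) to pass through all $11$ points at once. Your curve, an elliptic normal quintic in a hyperplane $\bp^4$, can only absorb $8$ of the $11$, and that is where the trouble starts.

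The gap is in Step 4. Once $8$ of the $11$ points are fixed on $H$ and $E$ is chosen through them, you have (granting Step 3) a $10$-dimensional family of quintic del Pezzos $S$ with $S\cap H = E$. But the assertion that the $3$ remaining general points of $\bp^5$ impose $9$ independent conditions on this family, leaving a nonempty $1$-parameter family, is exactly the ``counting constants'' plausibility argument the paper warns is not a proof. To impose even the first off-$H$ point you would need to know the $10$-dimensional family sweeps out all of $\bp^5$; to impose the second you would need the $7$-dimensional residual family to still sweep out $\bp^5$; and so on. None of these sweeping claims is established. The paper's scroll argument avoids this entirely because interpolation for the auxiliary variety (the scroll) is already a theorem, and the quadric construction inside a fixed scroll is explicit.

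Step 5 correctly identifies the criterion that would close the argument -- surjectivity of the evaluation map $H^0(S,N_{S/\bp^5}) \to \bigoplus_{i=1}^{11} N_{S/\bp^5}|_{p_i}$ at a single smooth configuration, which by $H^1(S,N_{S/\bp^5})=0$ and semicontinuity would give weak interpolation -- but Steps 2--4 do not actually produce a configuration where you have verified this rank. If you want to salvage the hyperplane-section strategy, a cleaner tack would be: start from an arbitrary $(S,H)$ and points ($8$ on $E=S\cap H$, $3$ off $E$), then prove surjectivity directly from the exact sequence $0 \to N_{S/\bp^5}(-E) \to N_{S/\bp^5} \to N_{S/\bp^5}|_E \to 0$ by computing $h^1(N_{S/\bp^5}(-1))=0$ (so that $h^0(N_{S/\bp^5}(-E))=10$) and then showing that $8$ general points of $E$ impose $24$ independent conditions on $H^0(N_{S/\bp^5}|_E)$ and $3$ general points of $S\setminus E$ impose $9$ independent conditions on $H^0(N_{S/\bp^5}(-E))$. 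That is a real cohomological computation, not a parameter count, and it is the content your outline is missing. Also note that the sequence you wrote in Step 3, $0\to N_{E/S}\to N_{E/\bp^5}\to N_{S/\bp^5}|_E\to 0$, governs deformations of the curve $E$, not of the surface $S$ with $E$ held fixed; the extension problem you need is controlled by $H^0(S, N_{S/\bp^5}(-E))$, not by the cokernel of that sequence.
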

\begin{proof}
	By \autoref{table:del-Pezzo-conditions}	it suffices to show quintic del Pezzo surfaces pass through $11$ points.
	
	Start by choosing $11$ points.
	Since degree $3$, dimension $3$	scrolls satisfy interpolation,
	by \autoref{thm:stronginterpscroll},
	there is such a scroll through any $12$ general points.
	Equivalently, there is a two dimensional family of scrolls
	through $11$ points, which sweeps out all of $\bp^5$.
	In any scroll in this two dimensional family, we will show
	there is a quintic del Pezzo.

	First, start with a scroll $X$ containing the $11$ points.
	Since $X$ is projectively normal and its ideal is defined by
	$3$ quadrics, $h^0(X, \sco_X(2)) = 21 - 3 = 18.$
	Therefore, if we let $P$ be a ruling two plane of $X$,
	since $h^0(P, \sco_P(2)) = 6$, there will be an $18 - 6 = 12$
	dimensional space of quadrics on $X$ vanishing on $P$.
	Therefore, there will be a $12 - 11 = 1$ dimensional
	space of quadrics vanishing on $P$ and containing the $11$ points.
	However, the intersection of any such quadric with $X$
	is the union of $P$ and a quintic del Pezzo surface. 
	Therefore, we have produced a two dimensional family of
	quintic del Pezzo surfaces containing the $11$ points.
	\end{proof}

\begin{remark}
	\label{remark:}
	Another way to prove weak interpolation of quintic del Pezzo
	surfaces uses curves instead of threefolds.
	Specifically, by \cite[Corollary 6]{stevens:on-the-number-of-points-determining-a-canonical-curve}, every genus $6$ canonical curve passes through
	$11$ general points. Then, since there is a quintic del Pezzo
	surface containing any genus 6 canonical curve,
	as proved in, among other places,
	\cite[5.8]{arbarelloH:canonical-curves-and-quadrics-of-rank-4}.
	Then, because there is a canonical curve containing these points
	and a quintic del Pezzo containing the canonical curve,
	there is a quintic del Pezzo containing these points.
\end{remark}

\section{Degree $6$ del Pezzos}
\label{sec:degree-6}
By \autoref{table:del-Pezzo-conditions}, weak interpolation for sextic del Pezzos amounts to showing that through $11$ general points $\Gamma_{11} \subset \bp^{6}$ there passes a sextic del Pezzo.

\begin{lemma}\label{lem:genus3degree9}
Through $11$ general points $\Gamma_{11} \subset \bp^{6}$ there passes a smooth degree $9$, genus $3$ curve. 
\end{lemma}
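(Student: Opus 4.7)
The plan is to apply the Atanasov--Larson--Yang interpolation theorem (\autoref{thm:NaskoEricDavid}) with $(d, g, r) = (9, 3, 6)$.

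First I would verify that these numerics satisfy the hypotheses of \autoref{thm:NaskoEricDavid}. Since $9 > 2g - 2 = 4$, any degree $9$ line bundle on a smooth genus $3$ curve is nonspecial, with $h^0 = 7$, and hence embeds the curve as a linearly normal nonspecial curve in $\bp^6$. The hypothesis $d \geq g + r$ reads $9 \geq 3 + 6 = 9$, which holds with equality, and $(9,3,6)$ is not one of the three exceptional triples $(5,2,3)$, $(6,2,4)$, $(7,2,5)$. Therefore strong interpolation holds for the main component $\scu$ of the Hilbert scheme parameterizing smooth degree $9$, genus $3$ curves in $\bp^6$.

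Next I would carry out the dimension count to confirm that strong interpolation on $\scu$ specializes to the desired statement. Such a curve has normal bundle of rank $n - 1 = 5$ and degree $(n+1)d + 2g - 2 = 67$, so $\chi(N) = 67 + 5(1 - g) = 57$; the vanishing of $H^1(N)$ (a byproduct of \autoref{thm:NaskoEricDavid}) then gives $\dim \scu = 57$. Each point of $\bp^6$ imposes $n - k = 5$ conditions on a curve, and $57 = 11 \cdot 5 + 2$, with remainder $r = 2$. Strong interpolation therefore produces a curve in $\scu$ through $11$ general points and incident to a general plane of dimension $n - k - r = 3$; discarding the incidence with the $3$-plane gives the desired statement.

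There is really no significant obstacle here — the lemma is essentially a direct consequence of \autoref{thm:NaskoEricDavid}. If that theorem were not available, the alternative would be a hands-on construction: specialize the $11$ points into a convenient configuration, produce a reducible or nodal degree $9$, genus $3$ curve through the specialization (for instance, by attaching lower-degree rational components to an embedded plane quartic model of a genus $3$ curve) and check that its point in the Hilbert scheme is unobstructed, then smooth to obtain the desired curve. This would be considerably more involved and is unnecessary given the cited theorem.
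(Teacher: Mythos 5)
Your proof is correct and takes exactly the same approach as the paper, which simply cites \autoref{thm:NaskoEricDavid}; you have merely made the verification of the hypotheses and the dimension count explicit.
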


\begin{proof}
This is a special case of \autoref{thm:NaskoEricDavid}.
\end{proof}

Starting from a curve $C \subset \bp^{6}$ as in \autoref{lem:genus3degree9}, we can ``build'' a sextic del Pezzo surface containing $C$.

\begin{lemma}\label{lem:p+q+r}
Let $D$ be a general degree $9$ divisor class on a genus $3$ curve $C$. Then there exists a unique degree three effective divisor $P+Q+R$ such that $D \sim 3K_{C} - (P+Q+R)$.
\end{lemma}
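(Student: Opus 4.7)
The plan is a straightforward application of Riemann--Roch together with a genericity argument. First I would translate the statement into a question about a degree three linear system. Setting $E := 3K_C - D$, which has degree $\deg(3K_C) - \deg(D) = 12 - 9 = 3$, the existence and uniqueness of an effective divisor $P+Q+R$ representing $3K_C - D$ is equivalent to the assertion that $h^0(C, E) = 1$.

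Next I would apply Riemann--Roch on the genus $3$ curve $C$:
\[
h^0(C,E) - h^0(C, K_C - E) \;=\; \deg E - g + 1 \;=\; 3 - 3 + 1 \;=\; 1.
\]
Thus it suffices to show $h^0(C, K_C - E) = 0$. The line bundle $K_C - E = D - 2K_C$ has degree $9 - 2 \cdot 4 = 1$, so the claim reduces to showing that $D - 2K_C$ is a non-effective degree one divisor class.

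The main (and only) obstacle is the genericity input: effective divisor classes of degree $1$ on $C$ form a $1$-dimensional image $C \hookrightarrow \mathrm{Pic}^1(C)$ inside the $3$-dimensional Picard variety, so a general class of degree $1$ is not effective. Since translation by $-2K_C$ is an isomorphism $\mathrm{Pic}^9(C) \xrightarrow{\sim} \mathrm{Pic}^1(C)$, for $D$ general in $\mathrm{Pic}^9(C)$ the class $D - 2K_C$ avoids this $1$-dimensional locus of effective classes, giving $h^0(C, K_C - E) = 0$. Riemann--Roch then yields $h^0(C, E) = 1$, producing the unique effective divisor $P + Q + R$.
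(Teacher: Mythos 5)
Your proof is correct. The paper's argument is a compressed version of the same idea: it cites the standard fact that the Abel--Jacobi map $\Sym^g C \to \Pic^g C$ is birational and observes that translation by $-3K_C$ sends a general point of $\Pic^9 C$ to a general point of $\Pic^3 C$, so the latter has a unique effective representative; effectiveness in all cases comes from Riemann--Roch since $\deg = g$. You instead re-derive the generic injectivity of the Abel--Jacobi map in the case at hand: Riemann--Roch reduces $h^0(E) = 1$ to $h^0(K_C - E) = 0$, and you rule out effectiveness of the degree-$1$ class $D - 2K_C$ by the dimension count $\dim C = 1 < 3 = \dim \Pic^1 C$, again using translation to propagate genericity. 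So the two routes are mathematically equivalent; yours is a bit more self-contained (it does not invoke Abel--Jacobi birationality as a black box), while the paper's is shorter because it leans on the standard citation. Both are valid.
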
 

\begin{proof}
In general, if $X$ is a smooth genus $g$ curve, the natural map \[J \from \Sym^{g}C \to \Pic^{g}C\] is a birational morphism.

In our setting, if $D$ is a general degree $9$ divisor class, $3K_C - D$ will be a general degree $3$ divisor class, and therefore can be represented by a unique degree three divisor class $P+Q+R$. Of course, by Riemann-Roch,
every degree three divisor class is effective.
\end{proof}

\begin{lemma}\label{lem:sexticcontainingC}
	Let $\Gamma_{11} \subset \bp^{6}$ be general, and let $C$ be general among the degree $9$, genus $3$ curves containing $\Gamma_{11}$.  Then there is a smooth sextic del Pezzo surface containing $C$.
\end{lemma}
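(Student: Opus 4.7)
The strategy is to realize the sextic del Pezzo as the blow-up of $\bp^{2}$ at three points lying on the canonical image of $C$. Since the general genus $3$ curve is non-hyperelliptic, the canonical map embeds $C$ as a smooth plane quartic $C \hookrightarrow \bp^{2}$; in particular $K_{C} \sim H|_{C}$ with $H$ the hyperplane class on $\bp^{2}$. Applying \autoref{lem:p+q+r} to the hyperplane class $D = \sco_{C}(1)$ yields a unique effective divisor $P+Q+R$ with $D \sim 3K_{C} - (P+Q+R)$, and we let $p, q, r \in \bp^{2}$ denote the images of $P, Q, R$ under the canonical embedding.

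For general $D$ the points $p, q, r$ are distinct and non-collinear: collinearity of three points on the plane quartic forces $P+Q+R \sim K_{C} - T$ for some $T \in C$, which cuts out a proper subvariety of $\Pic^{3}(C)$, and non-distinctness is similarly codimension at least one. Assuming this, let $\pi \colon \tilde{S} \to \bp^{2}$ be the blow-up at $p, q, r$; then $\tilde{S}$ is a smooth sextic del Pezzo, and $|-K_{\tilde{S}}|$ embeds $\tilde{S} \hookrightarrow \bp^{6}$. The strict transform $\tilde{C}$ of the plane quartic has class $4H - E_{1} - E_{2} - E_{3}$ on $\tilde{S}$, and the essential identity is
\begin{equation*}
(-K_{\tilde{S}})|_{\tilde{C}} = (3H - E_{1} - E_{2} - E_{3})|_{\tilde{C}} \sim 3K_{C} - (P+Q+R) \sim D,
\end{equation*}
using $H|_{\tilde{C}} \sim K_{C}$ and $E_{i} \cap \tilde{C} = \{P\}, \{Q\}, \{R\}$ respectively.

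Hence $\tilde{C} \subset \bp^{6}$ is embedded by $|D|$; Riemann--Roch gives $h^{0}(C, D) = 7$ since $\deg(K_{C} - D) < 0$, so $|D|$ is the complete linear system. The original $C \subset \bp^{6}$ is also embedded by the complete linear system $|D|$, so the two embeddings of $C \cong \tilde{C}$ into $\bp^{6}$ differ by a projective transformation $g \in \PGL_{7}$, and $g(\tilde{S})$ is a smooth sextic del Pezzo containing the original curve $C$. The main obstacle I anticipate is the genericity claim: for the general $(\Gamma_{11}, C)$, the divisor class $D$ on $C$ must be sufficiently general that $p, q, r$ come out distinct and non-collinear on the canonical model. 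This should follow once one checks that the construction produces a dominant map from pairs $(\Gamma_{11}, C)$ to pairs (smooth plane quartic, degree-$9$ divisor class).
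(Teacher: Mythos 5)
Your proof takes essentially the same route as the paper's. Both start from the non-hyperelliptic canonical model $C \hookrightarrow \bp^{2}$ as a plane quartic, invoke \autoref{lem:p+q+r} to get $D \sim 3K_{C} - (P+Q+R)$ with $P,Q,R$ non-collinear for a general $D$, and realize the sextic del Pezzo as the image of $\bp^{2}$ under the system of cubics through $p,q,r$ (you make the blow-up and the identification $-K_{\tilde S}|_{\tilde C}\sim D$ explicit, but that is the same construction). The only thing you flag as a worry — that $D=\sco_{C}(1)$ is general enough in $\Pic^{9}(C)$ to ensure $p,q,r$ distinct and non-collinear — is also not spelled out in the paper, which simply asserts ``Under the generality conditions, we can assume $P,Q,R$ are not collinear.'' Your collinearity argument (collinear $p,q,r$ force $P+Q+R \sim K_{C}-T$, a codimension-$2$ locus in $\Pic^{3}$) is a correct and welcome addition of detail, and your final $\PGL_{7}$ matching step is the standard justification that two embeddings by the same complete linear system agree up to projectivity. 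In short: correct, same approach, somewhat more careful bookkeeping.
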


\begin{proof}
Since $\Gamma_{11}$ are chosen generally, we have that a general $C$ containing them is not hyperelliptic.
So, we may embed $C \subset \bp^{2}$ via its canonical series $|K_{C}|$. The linear system $|3K_{C} - (P + Q + R)|$ on $C$ is cut out by plane cubics passing through the three points $P + Q + R$. Under the generality conditions, we can assume $P,Q,R$ are not collinear in $\bp^{2}$.

The linear system of plane cubics through three noncollinear points maps $\bp^{2}$ birationally to a smooth sextic del Pezzo surface in $\bp^{6}$.
\end{proof}

\begin{theorem}\label{thm:sexticDPweakinterpolation}
Sextic del Pezzo surfaces satisfy weak interpolation.
\end{theorem}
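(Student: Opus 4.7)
The plan is to combine the two preceding lemmas in a straightforward way. By \autoref{table:del-Pezzo-conditions}, weak interpolation for sextic del Pezzo surfaces reduces to exhibiting, for a general configuration $\Gamma_{11} \subset \bp^{6}$ of $11$ points, a smooth sextic del Pezzo passing through $\Gamma_{11}$. So I would simply chain \autoref{lem:genus3degree9} and \autoref{lem:sexticcontainingC}: first produce a smooth degree $9$, genus $3$ curve $C$ containing $\Gamma_{11}$, then produce a smooth sextic del Pezzo $X$ containing $C$, and conclude $X \supset \Gamma_{11}$.

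The one subtlety is that \autoref{lem:sexticcontainingC} requires $C$ to be \emph{general} among the degree $9$, genus $3$ curves containing $\Gamma_{11}$ (so that $C$ is non-hyperelliptic and the three base points $P,Q,R$ arising from \autoref{lem:p+q+r} are non-collinear in the canonical $\bp^2$). To make this rigorous, I would observe that the family of such $C$ has positive dimension, and that both the hyperelliptic locus and the locus where $P,Q,R$ are collinear are proper closed conditions, so for $\Gamma_{11}$ general we may select $C$ outside of them. Since \autoref{lem:genus3degree9} comes from \autoref{thm:NaskoEricDavid}, which gives strong interpolation, we actually get a nonempty open sublocus of curves through $\Gamma_{11}$ to choose from.

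I do not anticipate any real obstacle: the heavy lifting has already been done in establishing the two lemmas, and the theorem is essentially their composition. If forced to identify a pitfall, it would be confirming that ``general $C$ through $\Gamma_{11}$'' really meets the hypotheses of \autoref{lem:sexticcontainingC}, i.e.\ that the generality of $\Gamma_{11}$ propagates to generality of $C$. This follows because the incidence correspondence of pairs $(C,\Gamma_{11})$ is irreducible and dominates both factors, so a general $\Gamma_{11}$ admits a general $C$.
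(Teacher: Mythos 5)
Your proposal is correct and takes essentially the same route as the paper: reduce to $11$ general points via \autoref{table:del-Pezzo-conditions}, then chain \autoref{lem:genus3degree9} with \autoref{lem:sexticcontainingC}. Your extra remark about propagating generality from $\Gamma_{11}$ to $C$ is a reasonable clarification of a point the paper leaves implicit (indeed, \autoref{lem:sexticcontainingC} is already phrased in terms of a general $C$ through a general $\Gamma_{11}$, so the paper's proof is just the composition you describe).
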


\begin{proof}
To show a sextic del Pezzo satisfies interpolation,
by \autoref{table:del-Pezzo-conditions}, it suffices to show it passes through $11$ general points.
By ~\autoref{lem:genus3degree9}, there is a degree $9$ genus $3$
curve through $11$ general points in $\bp^6$. By \autoref{lem:sexticcontainingC},
there is a sextic del Pezzo containing a degree $3$ genus $9$ curve in
$\bp^6$.
\end{proof}

\section{The degree $8$, type 0 del Pezzos}
\label{sec:degree-8-type-0}
Next we consider $\bp^{1} \times \bp^{1} \subset \bp^{8}$ embedded by the linear system of $(2,2)$-curves. To prove weak interpolation, by \autoref{table:del-Pezzo-conditions}, we want to show there is such a surface passing through $12$ general points $\Gamma_{12} \subset \bp^{8}$.  As in the sextic del Pezzo case, we will again ``build'' a surface starting from a curve.

\begin{lemma}\label{lem:genus2degree10}
Through $12$ general points $\Gamma_{12} \subset \bp^{8}$ there passes a smooth genus $2$ curve of degree $10$.
\end{lemma}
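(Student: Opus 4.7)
The plan is to invoke the Atanasov--Larson--Yang theorem (\autoref{thm:NaskoEricDavid}) as a black box, exactly as was done in \autoref{lem:genus3degree9}. Set $(d,g,r) = (10,2,8)$. Since $d \geq g+r$ holds (in fact with equality, $10 = 2+8$), and since $(10,2,8)$ is not among the exceptional triples $\{(5,2,3),(6,2,4),(7,2,5)\}$, the hypotheses of \autoref{thm:NaskoEricDavid} are met, so strong interpolation holds for the main component of the Hilbert scheme of nonspecial degree $10$, genus $2$ curves in $\bp^8$; weak interpolation through $12$ points is a special case.

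As a sanity check on the number $12$, I would perform the dimension count. Since $d = 10 > 2g - 2 = 2$, such a curve $C$ is nonspecial and the embedding line bundle is general, so $H^1(C, N_{C/\bp^8}) = 0$ and the main component of the Hilbert scheme has dimension
\[
h^0(N_{C/\bp^8}) = (r+1)d + (r-3)(1-g) = 9 \cdot 10 + 5 \cdot (-1) = 85.
\]
Each point of $\bp^8$ imposes $n - k = 8 - 1 = 7$ conditions on a curve, so $12$ general points cut out an $85 - 84 = 1$-parameter residual family, matching the format of the interpolation problem described in the introduction. Hence the count is consistent, and weak interpolation through $12$ points is exactly what \autoref{thm:NaskoEricDavid} provides.

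The main (and only) obstacle is simply confirming that the hypotheses of \autoref{thm:NaskoEricDavid} apply; there is no additional geometric construction needed here, in contrast to the subsequent step of producing a $(2,2)$-embedded $\bp^1 \times \bp^1$ containing such a curve, which will be the substantive part of handling the degree $8$, type $0$ case.
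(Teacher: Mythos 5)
Your proof is correct and coincides with the paper's: the paper's entire proof is the single line ``This is a special case of \autoref{thm:NaskoEricDavid},'' which is exactly the invocation you make with $(d,g,r)=(10,2,8)$. The added dimension count is a sound and welcome sanity check (it confirms $\dim \hilb{C} = 85 = 12\cdot 7 + 1$, so weak interpolation is indeed passing through $12$ points), but it is not part of the paper's argument.
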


\begin{proof}
This is a special case of \autoref{thm:NaskoEricDavid}.
\end{proof}

\begin{lemma}\label{lem:degree10divisors}
A general degree $5$ divisor class $D$ on a smooth genus $2$ curve may be written uniquely as $K_{C} + A$, where $A$ is a basepoint free degree $3$ divisor class.  A general degree $10$ divisor class $E$ can be expressed as $2D$ for $2^{4}$ distinct degree $5$ divisor classes $D$.
\end{lemma}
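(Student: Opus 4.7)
The first assertion is essentially formal. Given any degree $5$ divisor class $D$, setting $A := D - K_{C}$ produces a \emph{unique} degree $3$ class with $D = K_C + A$, since translation by $-K_C$ is a bijection $\pic^{5}(C) \to \pic^{3}(C)$. The only issue is to check basepoint-freeness of $A$ for general $D$. By Riemann--Roch, a degree $3$ divisor class $A$ on a genus $2$ curve satisfies $h^{0}(A) = 2$ (since $h^{0}(K_C - A) = 0$ for degree reasons), so $A$ fails to be basepoint free precisely when $A - p$ remains effective for some $p \in C$, i.e.\ when $A = p + B$ with $B$ a degree $2$ class satisfying $h^0(B) \geq 2$. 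The unique degree $2$, two-dimensional linear series on a genus $2$ curve is $|K_C|$, so the non-basepoint-free locus in $\pic^{3}(C)$ is the image of the map $C \to \pic^{3}(C)$, $p \mapsto K_C + p$. This is a $1$-dimensional subset of the $2$-dimensional $\pic^{3}(C)$, hence avoided by a general class.

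For the second assertion, the key observation is that the multiplication-by-$2$ map
\[
[2] \from \pic^{5}(C) \to \pic^{10}(C), \qquad D \mapsto 2D,
\]
is a torsor under the multiplication-by-$2$ isogeny $[2] \from J(C) \to J(C)$ on the Jacobian. For a curve of genus $g$ in characteristic zero, $[2]$ is a finite \'etale cover of degree $|J(C)[2]| = 2^{2g}$. Specializing to $g = 2$, every degree $10$ class $E$ therefore has exactly $2^{4} = 16$ preimages under $[2]$, giving the claimed count.

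The statement is essentially a piece of abelian variety bookkeeping together with a one-line Riemann--Roch check, so there is no serious obstacle; the only place one must be slightly careful is in verifying that the codimension of the non-basepoint-free locus in $\pic^{3}(C)$ really is positive, which is immediate from the parameter count above.
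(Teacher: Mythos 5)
Your proof is correct and complete. Note that the paper itself offers essentially no argument here, writing only ``Similar to \autoref{lem:p+q+r}. We leave the details to the reader,'' so there is no alternative route in the source to compare against; your write-up is the natural filling-in.

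Two small remarks on what you supplied. In the first part, the analogy with \autoref{lem:p+q+r} is somewhat loose, since the uniqueness of $A = D - K_C$ is automatic (translation by $-K_C$ is a bijection, as you observe) rather than coming from the birationality of $\Sym^g C \to \Pic^g C$; the genuine content is the basepoint-freeness, and you handle it correctly by identifying the bad locus as the curve $K_C + C \subset \Pic^3(C)$, using that $K_C$ is the unique $g^1_2$ on a genus $2$ curve. In the second part, your torsor/isogeny argument is clean and in fact shows the count of $2^4$ square roots holds for \emph{every} degree $10$ class (in characteristic zero, or indeed characteristic $\ne 2$), so the ``general'' hypothesis in the statement is not needed for that count; it only becomes relevant when one wants to ensure that some square root $D$ is itself general in the sense of the first assertion, which is how the lemma is used in \autoref{lem:buildP1P1}.
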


\begin{proof}
Similar to \autoref{lem:p+q+r}. We leave the details to the reader.
\end{proof}

\begin{lemma}\label{lem:buildP1P1}
The general genus $2$, degree $10$ curve $C \subset \bp^{8}$ is contained in a $\bp^{1} \times \bp^{1}$ embedded via the linear system of $(2,2)$ curves.
\end{lemma}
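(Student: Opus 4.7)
Plan: Let $E$ denote the degree-$10$ line bundle embedding $C \subset \bp^8$. By \autoref{lem:degree10divisors}, we may write $E \sim 2D$ for some degree-$5$ divisor class $D$ (in fact there are $16$ choices), and for general $C$ each such $D$ decomposes uniquely as $D \sim K_C + A$ where $A$ is a basepoint free degree-$3$ class. By Riemann--Roch, $h^0(A) = 2$, so $|A|$ defines a degree-$3$ morphism $\phi_A \colon C \to \bp^1$, while $|K_C|$ gives the hyperelliptic double cover $\phi_{K_C} \colon C \to \bp^1$. My plan is to use the product map
\[
\psi = (\phi_{K_C}, \phi_A) \colon C \to \bp^1 \times \bp^1
\]
to realize $C$ as a $(3,2)$-curve on a quadric, then promote this to the desired embedding.

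The first key step is to verify that $\psi$ is a closed embedding whose image has bidegree $(3,2)$. Writing the image class as $(a,b)$, the composite of $\psi$ with the two projections has degrees $(\deg \psi)\cdot a = 3$ and $(\deg \psi)\cdot b = 2$; since $\gcd(2,3) = 1$, this forces $\deg \psi = 1$ and $(a,b) = (3,2)$, so $\psi$ is birational onto its image. The arithmetic genus of any $(3,2)$-curve in $\bp^1 \times \bp^1$ is $(3-1)(2-1) = 2$, which equals the geometric genus of $C$. Thus the image is smooth, and a birational morphism from the smooth curve $C$ onto a smooth curve is an isomorphism. This gives the closed embedding $\psi$.

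Now embed $\iota \colon \bp^1 \times \bp^1 \hookrightarrow \bp^8$ via the complete linear system $|\sco(2,2)|$. Since $\psi^*\sco(2,2) = 2K_C + 2A = 2D \sim E$, the composite $\iota \circ \psi \colon C \to \bp^8$ is given by the linear subseries of $|E|$ cut out by restriction from $H^0(\bp^1\times\bp^1, \sco(2,2))$. The ideal sheaf sequence
\[
0 \to \sco(-1, 0) \to \sco(2, 2) \to \sco(2, 2)|_{\psi(C)} \to 0
\]
combined with $h^i(\bp^1 \times \bp^1, \sco(-1,0)) = 0$ for all $i$ shows the restriction map is an isomorphism of $9$-dimensional spaces, so this subseries is actually the complete $|E|$. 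Since the original embedding $C \subset \bp^8$ is also by the complete linear system $|E|$ (as $\deg E = 10 > 2g-2$ gives $h^0(E)=9$), the two embeddings of $C$ differ by a single projective transformation of $\bp^8$. Transporting $\iota(\bp^1 \times \bp^1)$ by this transformation produces a $(2,2)$-embedded $\bp^1 \times \bp^1$ in $\bp^8$ containing $C$. The main obstacle is the verification that $\psi$ is an embedding, but the coprimality of the two pencil degrees together with the adjunction genus computation settle this cleanly.
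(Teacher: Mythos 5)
Your proof is correct and takes essentially the same approach as the paper: decompose $H \sim 2D \sim 2(K_C + A)$, map $C$ to $\bp^1 \times \bp^1$ via $(|K_C|, |A|)$, and observe that $|(2,2)|$ pulls back to the complete system $|2D|$. You supply welcome detail that the paper leaves unstated, namely the coprimality-plus-adjunction argument showing $\psi$ is a closed embedding, and the ideal-sheaf computation showing the restriction $H^0(\bp^1\times\bp^1,\sco(2,2)) \to H^0(C, E)$ is an isomorphism.
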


\begin{proof}
Let $H$ denote the degree $10$ hyperplane divisor class on $C \subset \bp^{8}$. Write $H = 2D$ for some degree five divisor class $D$, and write $D \sim K_{C} + A$ for a unique degree $3$ divisor class $A$. By generality assumptions, $A$ is basepoint free, and we obtain a map \[f \from C \to \bp^{1} \times \bp^{1}\] given by the pair of series $(|K_{C}|, |A|)$. This map embeds $C$ as a $(2,3)$ curve.

The linear system $|(2,2)|$ on this $\bp^{1} \times \bp^{1}$ restricts to the complete linear system $2D$ on $C$, and therefore induces the original embedding $C \subset \bp^{8}$.  The image of $\bp^{1} \times \bp^{1}$ under the system $|(2,2)|$ is therefore the surface we desire.
\end{proof} 

\begin{theorem}\label{thm:weakinterpolationP1xP1}
$\bp^{1} \times \bp^{1} \subset \bp^{8}$ embedded via the linear system of $(2,2)$ curves satisfies weak interpolation.
\end{theorem}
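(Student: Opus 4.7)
The proof should be essentially a direct chaining of the preceding lemmas, mirroring the structure used for the sextic del Pezzo case in \autoref{thm:sexticDPweakinterpolation}. The plan is to start with $12$ general points $\Gamma_{12} \subset \bp^{8}$, produce a smooth genus $2$, degree $10$ curve $C$ through them via \autoref{lem:genus2degree10}, and then apply \autoref{lem:buildP1P1} to embed $C$ in a $(2,2)$--embedded $\bp^{1} \times \bp^{1}$. By \autoref{table:del-Pezzo-conditions}, weak interpolation for the type $0$ degree $8$ component amounts precisely to producing such a surface through $12$ general points, so these two facts chained together should finish it.

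The only subtlety is a compatibility-of-generality check: \autoref{lem:buildP1P1} requires that $C$ be \emph{general} among genus $2$, degree $10$ curves in $\bp^{8}$, while \autoref{lem:genus2degree10} only guarantees some such $C$ through the points $\Gamma_{12}$. I would handle this by noting that the incidence correspondence
\[
\Sigma = \{(C,\Gamma) : \Gamma \subset C\} \subset \H \times (\bp^{8})^{12}
\]
has dominant projection to $(\bp^{8})^{12}$ (by \autoref{lem:genus2degree10}) and dominant projection to $\H$ (since a general $C$ in the main component contains $12$ general points, because $C$ itself is nondegenerate in $\bp^{8}$ and $12$ general points on $C$ remain general in $\bp^{8}$ by a parameter count). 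Consequently, the curve $C$ obtained from $12$ general points is itself general in $\H$, so the hypotheses of \autoref{lem:buildP1P1}, in particular the basepoint-freeness of $A$ and the resulting $(2,3)$--embedding in $\bp^{1} \times \bp^{1}$, are satisfied.

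The main step to write carefully is this generality transfer; everything else is a one--line invocation. The actual geometric content, namely that $(2,2)$ restricted to a $(2,3)$ curve on $\bp^{1} \times \bp^{1}$ cuts out the degree $10$ hyperplane class on $C$, is already packaged inside \autoref{lem:buildP1P1}. I do not expect any real obstacle here; the proof will be a short paragraph, structurally identical to that of \autoref{thm:sexticDPweakinterpolation}.
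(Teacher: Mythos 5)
Your proof is correct and is essentially the paper's own argument: the paper's proof of \autoref{thm:weakinterpolationP1xP1} is precisely the one-line combination of \autoref{lem:genus2degree10} and \autoref{lem:buildP1P1}. The compatibility-of-generality check you flag is a genuine (if minor) gap the paper leaves to the reader; your incidence-correspondence argument is the standard way to fill it and matches the spirit of the analogous step in \autoref{lem:sexticcontainingC}.
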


\begin{proof}
	This follows by combining \autoref{lem:genus2degree10} and \autoref{lem:buildP1P1}.
\end{proof}

\begin{remark}
An interesting feature of this solution to our interpolation problem is that the surfaces we've constructed through the $12$ general points $\Gamma_{12}$ are in fact {\sl special} among the two dimensional family of surfaces passing through these points.  Indeed, the set $\Gamma_{12}$ is contained in a $(2,3)$ curve on the surfaces we've constructed, but a general set of twelve points on $\bp^{1}\times \bp^{1}$ does not lie on any $(2,3)$ curve!
\end{remark}

\section{Interlude: Association} 
\label{section:association}

This section is meant to provide the reader with basic familiarity with {\sl association}, also known as the {\sl Gale transform}.  Association will be a recurring tool in the rest of the paper. We closely follow the exposition in \cite{eisenbudP:the-projective-geometry-of-the-gale-transform}.

\subsection{Preliminaries} Throughout this section, we let $\Gamma$ be a Gorenstein scheme, finite over $k$ of length $\gamma = r + s + 2$, $L$ an invertible sheaf on $\Gamma$, and $V \subset H^{0}(\Gamma, L)$ a vector space of dimension $r+1$.  In practice, $\Gamma$ will be given as embedded in projective space $\bp^{r}$, $L$ will be $\so_{\Gamma}(1)$, and $V$ will be the image of the restriction map \[H^{0}(\bp^{r}, \so_{\bp^{r}}(1) ) \to H^{0}(\Gamma, \so_{\Gamma}(1)).\] 
For brevity, we will often refer to the data of the pair $(V, L)$ as a {\sl linear system} on $\Gamma$. For clarity, we will sometimes put subscripts on $\Gamma$ emphasizing the number of points.

The Gorenstein hypothesis on $\Gamma$ says that the dualizing sheaf $\omega_{\Gamma}$ is a line bundle, and furthermore Serre duality holds: There is a trace map $t \from H^{0}(\Gamma, \omega_{\Gamma}) \to k$, and for any line bundle $L$ the {\sl trace pairing}  \[H^{0}(\Gamma, L) \otimes H^{0}(\Gamma, {\dual L} \otimes \omega_{\Gamma}) \to k\] is nondegenerate.  

Therefore if $V$ is a $r+1$ dimensional subspace of $H^{0}(\Gamma, L)$, we obtain a natural $s+1$-dimensional subspace \[V^{\perp} \subset H^{0}(\Gamma, {\dual L} \otimes \omega_{\Gamma}),\] namely the orthogonal complement of $V$ under the trace pairing.

\begin{definition}\label{def:associatedseries}
Let $\Gamma$ be a length $\gamma$ Gorenstein scheme over $k$, and let $(V, L)$ be a linear system on $\Gamma$. Then we say $(V^{\perp}, {\dual L} \otimes \omega_{\Gamma})$ is the {\sl associated linear system} of $(V,L)$.
\end{definition} 

Notice that association provides a correspondence between {\sl vector spaces} $V \leftrightarrow V^{\perp}$, and not between vector spaces with chosen bases. Geometrically this means association provides a bijection between the $\PGL_{r+1}(k)$--orbits of Gorenstein $\Gamma \subset \bp^{r}$ (in general linear position) and $\PGL_{s+1}(k)$--orbits of Gorenstein $\Gamma \subset \bp^{s}$ (in general linear position).  Given this, in the future when we refer to ``the associated set,'' we really mean the $\PGL_{s+1}(k)$--orbit.  Moreover, it is known that association provides an isomorphism of GIT quotients \[(\bp^{r})^{\gamma}//\PGL_{r+1}(k) \xrightarrow{\sim} (\bp^{s})^{\gamma}//\PGL_{s+1}(k),\] and therefore takes general subsets to general subsets.

\subsection{Inducing association from an ambient linear system} Association is a very algebraic construction. Therefore, it is interesting to find geometric constructions which ``induce'' association for a set $\Gamma \subset \bp^{r}$.  To see many examples of the geometry underlying association, we refer to \cite{eisenbudP:the-projective-geometry-of-the-gale-transform}. 

In \cite[p.\ 2]{coble:associated-sets-of-points}, Coble asks, in less modern language, whether there exists a linear system $W^{s+1} \subset H^{0}(\bp^{r}, \so(d))$ whose base locus is disjoint from $\Gamma$, and which restricts on $\Gamma$ to the associated linear system.

A linear system $W^{s+1} \subset H^{0}(\bp^{r}, \so(d))$ yields a rational map \[\phi_{W} \from \bp^{r} \dashrightarrow \bp^{s}.\] 

\begin{definition}
Let $\Gamma \subset \bp^{r}$ be a Gorenstein scheme of degree $\gamma = r+s+2$. 
An {\sl ambient linear system} is any vector space $V \subset  H^{0}(\bp^{r}, \so(d))$.
An ambient linear system $W^{s+1} \subset  H^{0}(\bp^{r}, \so(d))$  {\sl induces association for} $\Gamma$ if its base locus is disjoint from $\Gamma$ and if the image $\phi_{W}(\Gamma) \subset \bp^{s}$ is the associated set of $\Gamma$.  
\end{definition}
  It is important to note, as Coble does, that an ambient system inducing association won't be unique in general.

  When association is induced from an ambient system, we automatically get a variety $\phi_{W}(\bp^{r}) \subset \bp^{s}$ containing $\Gamma$. Our task is ultimately to find an ambient linear system $W$ which induces association  for $\Gamma$, and such that the image $\phi_{W}(\bp^{r})$, (by this, we mean the image of the resolution of $\phi_{W}$) is a prescribed type of variety, e.g. Veronese images, del Pezzo surfaces, etc. 

\subsection{Goppa's theorem} Goppa's theorem is frequently useful when looking for ambient systems inducing association.  

\begin{theorem}[Goppa's Theorem]\label{thm:goppa}
Let $f \from B \to \bp^{r}$ be a map from a smooth curve given by a nonspecial, complete linear system $|H|$. Let $\Gamma \subset B$ be a scheme of length $\gamma = r+s+2$.  Then association for $\Gamma$ is induced by the restriction of the linear system $|K_{B}+\Gamma-H|$ to $\Gamma$.
\end{theorem}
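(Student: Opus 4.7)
The plan is to derive Goppa's theorem by combining adjunction for the finite subscheme $\Gamma \subset B$ with Serre duality on the curve $B$. First, adjunction on $B$ gives $\omega_\Gamma \cong \omega_B(\Gamma)|_\Gamma$, so the target sheaf of the associated linear system satisfies
$$\cO_\Gamma(H)^\vee \otimes \omega_\Gamma \cong \cO_\Gamma(K_B + \Gamma - H),$$
i.e., the restriction to $\Gamma$ of $\cO_B(K_B + \Gamma - H)$. It therefore makes sense to restrict the linear system $|K_B + \Gamma - H|$ from $B$ to $\Gamma$, and the goal is to show the image of this restriction map equals the orthogonal complement $V^\perp$ of $V = \mathrm{image}\bigl(H^0(B,H) \to H^0(\Gamma, \cO_\Gamma(H))\bigr)$ under the trace pairing.

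Next I would analyze the short exact sequence
$$0 \to \cO_B(K_B - H) \to \cO_B(K_B + \Gamma - H) \to \cO_\Gamma(K_B + \Gamma - H) \to 0$$
on $B$. Taking cohomology and using the nonspeciality of $H$ (so that $H^0(B, K_B - H) = H^1(B, H)^\vee = 0$) produces an injection
$$H^0(B, K_B + \Gamma - H) \hookrightarrow H^0(\Gamma, \omega_\Gamma(-H))$$
with connecting homomorphism landing in $H^1(B, K_B - H) \cong H^0(B, H)^\vee$. A Riemann--Roch count, together with $\deg H = r+g$ (forced by $h^0(B,H) = r+1$ and nonspeciality) and $h^0(B, H - \Gamma) = 0$ (automatic once $\gamma \geq r+1$ for sufficiently general $\Gamma$), shows that $h^0(B, K_B + \Gamma - H) = s+1 = \dim V^\perp$, so the dimensions on both sides of the claimed equality agree.

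The essential step, and the main technical point, is identifying the coboundary map $H^0(\Gamma, \omega_\Gamma(-H)) \to H^0(B,H)^\vee$ with the map $\sigma \mapsto \bigl(v \mapsto t(\sigma \cdot v|_\Gamma)\bigr)$ coming from the trace pairing on $\Gamma$. This is a standard local Serre-duality computation: the Yoneda/residue description of the connecting homomorphism, combined with compatibility between the global Serre duality trace on $B$ and the trace on the Gorenstein subscheme $\Gamma$, forces the coboundary to be exactly this pairing. Granting this identification, the image of $H^0(B, K_B + \Gamma - H)$ in $H^0(\Gamma, \omega_\Gamma(-H))$ is exactly the annihilator of $V$ under the trace pairing, which is $V^\perp$ by \autoref{def:associatedseries}.

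Combined with the dimension match from the previous paragraph, this shows the injection hits all of $V^\perp$, proving that restriction of $|K_B + \Gamma - H|$ to $\Gamma$ realizes the associated linear system. I expect the hardest part to be nailing down the identification of the coboundary with the trace pairing; everything else is bookkeeping with Riemann--Roch and the long exact sequence of cohomology.
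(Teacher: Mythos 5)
The paper does not actually prove Goppa's theorem: it is stated without proof as a classical result, with the surrounding section explicitly deferring to the exposition in Eisenbud--Popescu (\emph{The projective geometry of the Gale transform}) for all of this material. So there is no ``paper's own proof'' to compare against.

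That said, your sketch is the standard argument and it is sound. The three pieces fit together correctly: the vanishing $h^0(B, K_B - H) = 0$ from nonspeciality gives injectivity of restriction, the Riemann--Roch computation $h^0(B, K_B + \Gamma - H) = s+1$ (using $\deg H = r+g$ and $h^0(B, H - \Gamma) = 0$, the latter being implicit in $V$ having dimension $r+1$ so that the associated system is even defined) matches $\dim V^\perp$, and the identification of the connecting homomorphism $H^0(\Gamma, \omega_\Gamma \otimes L^\vee) \to H^1(B, \omega_B(-H)) \cong H^0(B,H)^\vee$ with the trace pairing is precisely what is needed to show the image lands inside $V^\perp$. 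You correctly flag that this last identification is the substantive point: it is the compatibility of the Serre trace on $B$ with the dualizing trace on the Gorenstein subscheme $\Gamma$, i.e.\ the residue theorem, applied after cupping with $v \in H^0(B,H)$ and using multiplicativity of the connecting map. You state this rather than prove it, so the write-up is a sketch, but the logical skeleton is complete and the route is the one any careful account would take. One small point worth making explicit in a full write-up: the hypothesis $h^0(B, H - \Gamma) = 0$ is not ``automatic'' from $\gamma \geq r+1$ alone; it should be stated as a standing assumption (equivalently, that the restriction $H^0(B,H) \to H^0(\Gamma, L)$ is injective), since the associated linear system is only defined under it.
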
 

In practice, we will typically find a curve $B \subset \bp^{r}$ passing through $\Gamma$, and will try to induce association by realizing the linear system $|K_{B} + \Gamma - H|$ on $B$ via an ambient system on $\bp^{r}$.

\subsection{The $2$-Veronese surfaces through $9$ general points in $\bp^{5}$}
\label{ssec:2-veronese-interpolation}
We conclude this section with a result going back to Coble \cite[Theorem 19]{coble:associated-sets-of-points} and more rigorously explored in Dolgachev \cite[Theorems 5.2 and 5.6]{dolgachev:on-certain-families-of-elliptic-curves-in-projective-space} showing there are four Veronese surfaces in $\bp^{5}$ containing $9$ general points (in characteristic not equal to $2$). 

The following result follows without too much work from \cite{dolgachev:on-certain-families-of-elliptic-curves-in-projective-space} although it isn't explicitly stated there. 

The key to finding $2$-Veronese surfaces through $9$ points is to find a genus $1$ curve through the $9$ points,
and then find a $2$-Veronese surface containing that curve. We start off by understanding $2$-Veronese
surfaces containing a genus $1$ curve.

\begin{proposition}
  \label{proposition:veronese-surfaces-through-genus-1-curves}
  Let $k$ be an algebraically closed field and let $E \subset \bp^5_k$
  be a general genus $1$ curve, embedded by a complete linear series of degree
  $6$.
  If $\ch k \neq 2$, there are four $2$-Veronese surfaces
  containing $E$ and if $\ch k = 2$, there are two $2$-Veronese
  surfaces containing $E$.
\end{proposition}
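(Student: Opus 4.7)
The plan is to reduce the count to the number of square roots of $L := \mathcal{O}_E(1) \in \Pic^6(E)$ in $\Pic^3(E)$, since a 2-Veronese surface $V \supset E$ turns out to be essentially the same data as a realization of $E$ as a plane cubic compatibly with the given embedding in $\bp^5$.

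In one direction, suppose $V \subset \bp^5$ is a 2-Veronese surface containing $E$. Picking an identification $V \cong \bp^2$ (under which $\mathcal{O}_V(1) \cong \mathcal{O}_{\bp^2}(2)$), the curve $E$ pulls back to a plane curve $C \subset \bp^2$ with $C \cdot \mathcal{O}_{\bp^2}(2) = \deg E = 6$, so $C$ is a plane cubic. The line bundle $M_V := \mathcal{O}_{\bp^2}(1)|_C$ is a degree $3$ square root of $L$ on $E$, and its isomorphism class is independent of the chosen identification $V \cong \bp^2$.

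Conversely, given a square root $M$ of $L$: since $\deg M = 3 \geq 2g(E)+1$, the complete linear system $|M|$ embeds $E$ as a plane cubic into $\bp(H^0(E,M)^*) \cong \bp^2$, and the 2-Veronese sends this $\bp^2$ into $\bp(H^0(\bp^2, \mathcal{O}(2))^*)$. The key technical point is to verify that the restriction map $H^0(\bp^2, \mathcal{O}(2)) \to H^0(E, M^{\otimes 2}) = H^0(E, L)$ is an isomorphism; both spaces are $6$-dimensional, and the kernel $H^0(\bp^2, \mathcal{I}_C(2))$ vanishes because no plane conic can contain a nondegenerate plane cubic. This canonically identifies the Veronese target with the original ambient $\bp^5 = \bp(H^0(E,L)^*)$, yielding a 2-Veronese surface $V_M$ containing $E$. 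A direct chase then shows $V \mapsto M_V$ and $M \mapsto V_M$ are mutually inverse, so the count of 2-Veronese surfaces equals the count of $k$-rational square roots of $L$.

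To finish, the set of square roots of $L$ is nonempty (because $\deg L$ is even and $\Pic^0(E) \cong E$ is $2$-divisible) and forms a torsor under the group $E[2] = \Pic^0(E)[2]$. Over an algebraically closed field of characteristic $\neq 2$, $E[2](k) \cong (\bz/2)^2$ has order $4$, giving four Veronese surfaces; in characteristic $2$ with $E$ ordinary (the generic case), $E[2](k) \cong \bz/2$ has order $2$, giving two. The most delicate step in the argument is the sectional isomorphism above; the characteristic-$2$ count implicitly invokes that the generic elliptic curve is ordinary, and otherwise the argument is bookkeeping about line bundles on elliptic curves.
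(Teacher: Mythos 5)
Your argument is correct, and it takes a genuinely different route through the hard (uniqueness) half of the proposition. Both you and the paper reduce the count to square roots $M$ of $L = \sco_E(1)$ in $\Pic^3(E)$, and both construct a $2$-Veronese surface from each $M$ by embedding $E \hookrightarrow \bp(H^0(E,M)^\ast) \cong \bp^2$ and then applying the Veronese. Where you diverge is in proving there are \emph{no other} $2$-Veronese surfaces through $E$: the paper fixes a square root $\scm$, supposes $X$ and $X'$ are two Veroneses realizing $E$ via $\scm$, and then constructs an automorphism of $\bp^5$ fixing $E$ pointwise and carrying $X$ to $X'$, which requires a careful analysis of the group $A(E,\scm)$ of automorphisms of $E$ preserving $\scm$ (via the extension $0 \to E[3] \to A(E,\scm) \to \bz/2 \to 0$). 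You instead set up an explicit two-sided bijection: from $V \supset E$ you extract $M_V := \sco_{\bp^2}(1)|_E$ under any identification $V \cong \bp^2$ (and check this is well-defined since $\PGL_3$ preserves $\sco(1)$), and from $M$ you build $V_M$; the sectional isomorphism $H^0(\bp^2,\sco(2)) \xrightarrow{\ \sim\ } H^0(E,L)$, together with the analogous isomorphism $H^0(\bp^2,\sco(1)) \cong H^0(E,M)$, lets you rigidly identify the ambient spaces and verify $V_{M_V} = V$ and $M_{V_M} = M$. Your route is more direct and avoids the automorphism-group bookkeeping; the price is that you must be careful (as you were) that every identification is canonical at the level of the linear systems involved. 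One shared caveat worth making explicit: in characteristic $2$, the count of square roots is $|E[2](k)|$, which is $2$ only when $E$ is ordinary and $1$ when $E$ is supersingular; the proposition as stated (and the paper's own proof, which says ``a general genus $1$ curve'') implicitly assumes the ordinary case, and you correctly flag this rather than paper over it.
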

\begin{remark}
  \label{remark:}
  It is shown in
     \cite[Theorem 5.6]{dolgachev:on-certain-families-of-elliptic-curves-in-projective-space} that there are exactly four $2$-Veronese surfaces containing
  a given genus 1 curve of degree $6$ in $\bp^5$
over a field of characteristic $0$.
  However, the proof given there does not make it completely clear
  why there is a unique $2$-Veronese surface through general $E$
  corresponding to each chosen square root of the line bundle embedding
  $E$. Therefore, we now repeat the proof in more detail,
  and generalize it to all characteristics.
  In fact, the proof shows there are $4$ such $2$-Veronese surfaces
  for all $E$ if $\ch k \neq 2$, there are $2$ such $2$-Veronese surfaces
  if $\ch k = 2$ and $E$ corresponds to a non-supersingular elliptic curve after choosing a point,
  and there is $1$ such $2$-Veronese if $\ch k = 2$ and $E$ corresponds to a supersingular elliptic
  curve after choosing a point.
\end{remark}
\begin{proof}
  Say $E \ra \bp^5$ is given by the invertible sheaf $\scl$. For any degree three invertible sheaf
$\scm$ with $\scm^{\otimes 2} \cong \scl$, we can map $E \ra \bp^2$ using $\scm$. Then, the composition of
$E \ra \bp^2$ with the $2$-Veronese map $\bp^2 \ra \bp^5$ will send $E$ to $\bp^5$ by $\scl$ and so we have
constructed a $2$-Veronese surface containing $E$. Since there are two such sheaves $\scm$
in characteristic $2$ for a general $E$ and four in all other characteristics (since a non supersingular elliptic curve has
two $2$ torsion points in characteristic $2$ and every genus $1$ curve has four such points in all other characteristics), it suffices to show these are the only 
$2$-Veronese surfaces containing $E$.
That is, we only need show that for each square root $\scm$ of $\scl$, there is a unique $2$-Veronese surface
$X \cong \bp^2$ containing $E$ so that the map $E \ra \bp^2$ is given by a basis for the global sections of $\scm$.
 
  First, note that if an automorphism fixes $E$ pointwise then it fixes all of $\bp^5$.
  This holds because $E$ spans $\bp^5$, and so a linear automorphism fixing $E$ pointwise would
also fix a basis for the vector space $H^0(\sco_E(1))$ which satisfies $\bp H^0(\sco_E(1)) \cong \bp^5$. Hence, such an automorphism would 
fix all of $\bp^5$.

 Suppose we have two $2$-Veronese surfaces $X$ and $X'$ containing
  $E$ so that $E$ 
  we have a map $\phi_1: E \ra X$ and $\phi_2 : E \ra X'$ so that
  both maps $\phi_1$ and $\phi_2$ are given by the same degree $3$ invertible
sheaf $\scm$, together with a choice of basis for $H^0(E, \scm)$.
  We will show that there exists an automorphism $\phi:\bp^5 \ra \bp^5$
  fixing $E$ pointwise and sending $X$ to $X'$. Since any automorphism of $\bp^5$
  fixing $E$ pointwise is the identity, this would imply $X = X'$, and would complete the proof.
 
  First, we show there is an automorphism $\phi: \bp^5 \ra \bp^5$
  fixing $E$ as a set and taking
  $X$ to $X'$.
  We know there is an automorphism $\psi:\bp^5 \ra \bp^5$ with $\psi(X) = X'$.
  Say $\psi$ sends the curve $E \subset X$
  to some curve $E' := \psi(E) \subset X'$.
  Next, by our assumption that $E$ and $E'$ are two curves on $X'$ both given by
  global sections associated to the same invertible sheaf $\scm$,
  there is some automorphism of $\psi': X' \ra X'$ with $\psi'(E') = E$.
  Thus, taking $\phi := \psi' \circ \psi$, we see
  $\phi(X) = X'$ and $\phi(E) = E'$ as sets.
  If we could arrange for $\phi|_E = \id$, we would be done, as then $\phi = \id$.

  Hence, it suffices to show that $\phi|_E$ is an automorphism of $E$ fixing both 
  $X$ and $X'$.    

  Let $A(E, \scm)$ denote the automorphisms $\pi:E \ra E$ with $\pi^* \scm \cong \scm$.
  Note that we have an exact sequence
\begin{equation}
  \nonumber
  \label{equation:}
  \begin{tikzcd} 
    0 \ar{r} & E[3] \ar {r} & A(E, \scm) \ar{r} & \bz/2 \ar{r} & 0 
  \end{tikzcd}\end{equation}
  where the generator of the quotient $\bz/2$ is the hyperelliptic involution and the
  subset $E[3]$ is a torsor over the $3$ torsion of $E$ with any given
  choice of origin.
  In particular, if we choose a point $p$ so that $\scm \cong \sco_E(3p)$,
  we have that $E[3]$ is precisely translation by $6$-torsion.

  It suffices to show that any element of $A(E, \scm)$
  fixes the $2$-Veronese surface we constructed above corresponding
  to $\scm$.

  But, if we view $E \ra \bp^2$ by a complete linear system corresponding
  to $\scm$, the automorphisms
  $A(E, \scm)$ are precisely the automorphisms of $\bp^2$
  fixing $E \subset \bp^2$ as a set.
  These automorphisms of $\bp^2$ extend to automorphisms on $\bp^5$
  with $\bp^2 \ra \bp^5$ embedded via the $2$-Veronese map.
  Therefore, they also fix the $2$-Veronese surface, as desired.
\end{proof}

\begin{theorem}
  \label{theorem:counting-veronese-interpolation}
  Through 9 general points in $\bp^5_k$ there exist four
  $2$-Veronese surfaces $\bp^2 \ra \bp^5$ if $k$
  is an algebraically closed field with $\ch k \neq 2$
  and two $2$-Veronese surfaces $\bp^2 \ra \bp^5$
  if $k$ is an algebraically closed field with $\ch k = 2$.
  In particular, the $2$-Veronese surface satisfies interpolation.
\end{theorem}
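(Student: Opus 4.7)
The plan is to reduce to Proposition~\ref{proposition:veronese-surfaces-through-genus-1-curves} by showing that there is a unique elliptic normal sextic $E \subset \bp^5$ through the $9$ general points, and that every $2$-Veronese surface through the $9$ points contains $E$. Granted these two facts, the proposition gives exactly $4$ (respectively $2$, when $\ch k = 2$) Veronese surfaces containing $E$, hence containing the $9$ points, and no others.

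Existence of $E$ follows from Theorem~\ref{thm:NaskoEricDavid} applied to $(d,g,r) = (6,1,5)$, which satisfies $d = g + r$ and is not in the exceptional list, so elliptic normal sextics satisfy strong interpolation; dimensionally, the main Hilbert scheme component is $36$-dimensional and each point imposes $4$ conditions, which is consistent with a finite family through $9$ points. For uniqueness, I would invoke Goppa's Theorem~\ref{thm:goppa}: taking $B = E$, $H$ the degree $6$ hyperplane class, and $\Gamma$ the $9$ points on $E$, association for $\Gamma$ is induced by $|K_E + \Gamma - H| = |\Gamma - H|$, which is a complete $g^2_3$ re-embedding $E$ as a plane cubic in $\bp^2$. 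Hence the $9$ associated points in $\bp^2$ lie on a plane cubic; since $9$ general points in $\bp^2$ lie on a unique plane cubic, and since association is a bijection of $\PGL$-orbits of general $9$-tuples in $\bp^5$ and in $\bp^2$, the curve $E$ is forced to be unique.

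Next, if $X \cong \bp^2 \hookrightarrow \bp^5$ is any $2$-Veronese surface containing the $9$ points, let $p_1, \ldots, p_9 \in \bp^2$ denote their preimages; these are general in $\bp^2$, so lie on a unique plane cubic $C$. The image of $C$ under the $2$-Veronese map is a degree $6$, genus $1$ curve in $\bp^5$ through the original $9$ points, so by the uniqueness just established it equals $E$. Thus $E \subset X$, and Proposition~\ref{proposition:veronese-surfaces-through-genus-1-curves} gives exactly $4$ (respectively $2$) such $X$. This also settles interpolation, since the Hilbert scheme of $2$-Veronese surfaces in $\bp^5$ is irreducible of dimension $35 - 8 = 27 = 9 \cdot 3$, so $9$ is indeed the balanced number of points.

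The main obstacle is uniqueness of $E$: a pure parameter count only yields finiteness, and bounding the number of elliptic sextics through the $9$ points by hand would be delicate. Goppa's theorem is the key trick, transferring the question to the classical fact that $9$ general points of $\bp^2$ lie on a unique plane cubic; after that, the pullback construction of $E$ inside each Veronese is a quick geometric observation that matches the count from the proposition.
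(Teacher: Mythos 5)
Your overall structure matches the paper's: reduce to Proposition~\ref{proposition:veronese-surfaces-through-genus-1-curves} via a unique elliptic normal sextic $E$ through the nine points, then show every $2$-Veronese through the points contains $E$. The second half of your argument (pull the nine points back to $\bp^2$ via an isomorphism with the Veronese, find the unique plane cubic, push it forward to a sextic genus-$1$ curve which must equal $E$) is essentially verbatim what the paper does. The difference is in how you establish the key input that $E$ is unique: the paper simply cites Dolgachev's Theorem~5.2, whereas you sketch a proof via Goppa's theorem and association. That is a genuinely different and attractive route, but as written it has a gap.

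The gap is in the sentence ``since association is a bijection of $\PGL$-orbits of general $9$-tuples\ldots\ the curve $E$ is forced to be unique.'' What Goppa plus uniqueness of the plane cubic actually gives you is: for every elliptic sextic $E$ through $\Gamma$, the re-embedding $E \to \bp^2$ by $|\Gamma - H|$ sends $\Gamma$ to (a representative of) $A(\Gamma)$ and sends $E$ onto the unique plane cubic $C$ through $A(\Gamma)$. This identifies the \emph{abstract marked curve} $(E,\Gamma)$ with $(C, A(\Gamma))$ for every such $E$, but it does not immediately say there is a unique \emph{subvariety} $E \subset \bp^5$. To close this, you need two further observations: (i) the hyperplane bundle of $E$ is determined by the picture in $\bp^2$, namely $H \sim \Gamma - L$ where $L$ is the hyperplane class on $C$, so that any two sextics through $\Gamma$ are carried to one another by a projectivity; and (ii) nine general points in $\bp^5$ have trivial stabilizer in $\PGL_6$, so that projectivity is the identity and the two sextics coincide as subvarieties. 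Without (i) and (ii), the passage from ``unique $\PGL_3$-orbit of $(C, A(\Gamma))$'' to ``unique $E \subset \bp^5$'' is not justified. Once you add those two lines, your Goppa-based proof of uniqueness is a clean, self-contained alternative to invoking Dolgachev, and the rest of the argument goes through as you wrote it.
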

\begin{proof}
  Fix $9$ general points $p_1, \ldots, p_9 \in \bp^5$.
  First, by \cite[Theorem 5.2]{dolgachev:on-certain-families-of-elliptic-curves-in-projective-space}, there is a unique genus 1 curve embedded by
  a complete linear series through 9 general points in $\bp^5$.
  Call this curve $E$.
  Next, by \autoref{proposition:veronese-surfaces-through-genus-1-curves},
  there are four $2$-Veronese surfaces containing $E$
  if $\ch k \neq 2$ and two $2$-Veronese surfaces containing $E$
  if $\ch k = 2$.
  To complete the proof, 
  it suffices to show that every $2$-Veronese surface containing
  $p_1, \ldots, p_9$
  also contains $E$.
  Consider such a $2$-Veronese surface $X \subset \bp^5_k$ containing
  $p_1, \ldots, p_9$.
  Choosing an isomorphism $\phi: \bp^2_k \cong X$,
  we have nine points $q_1, \ldots, q_9$ on $\bp^2$
  so that $\phi(q_i) = p_i$.

  Then, since $p_1, \ldots, p_9$ were general on $\bp^5_k$,
  we have that $q_1, \ldots, q_9$ are general on $\bp^2_k$,
  and so there is a degree $3$ genus 1 curve $C$ passing through
  $q_1, \ldots, q_9$ on $\bp^2_k$.
  The image of $\phi(C)\subset X$ is a degree $6$ genus $1$ curve
  containing $p_1, \ldots, p_9$.
  Since $E$ is the unique genus $1$ degree $6$ curve $E$ containing
  $p_1, \ldots, p_9$, we must have $\phi(C) \cong E$,
  and therefore $E \subset X$.
\end{proof}

\begin{remark}
Starting with a general $\Gamma_{9} \subset \bp^{5}$, we obtain
an associated set $A(\Gamma_{9}) \subset \bp^{2}$, a set of nine general points in the plane. It is tempting to re-embed this $\bp^{2}$ via the complete system of conics and hope that the image of the set $A(\Gamma_{9})$ is projectively equivalent to $\Gamma_{9}$.  However, this is not the case -- the system of conics in $\bp^{2}$ does {\sl not} induce association for a set of nine general points.
\end{remark}

\section{Degrees $9,8$ and $7$}
\label{sec:degree-7-8-9}
This section establishes weak interpolation for degree $9$ Del Pezzo surfaces, which are $3$-Veronese images of $\bp^{2}$ in $\bp^{9}$.  As we will see
in \autoref{ssec:degrees-7-and-8},
weak interpolation for degree $8$, type 1, and degree $7$ Del Pezzo surfaces immediately follow from the proof for degree $9$. We will also see in
\autoref{ssec:degrees-7-and-8} that tricanonical genus $3$ curves satisfy
interpolation.

\subsection{Results}
The main result of this section is:

\begin{theorem}[Existence]\label{thm:3veroneseexistence}
Let $\Gamma \subset \bp^{9}$ be thirteen general points. Then there exists a $3$-Veronese surface  containing $\Gamma$.
\end{theorem}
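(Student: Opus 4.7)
The plan is to employ the technique of association introduced in \autoref{section:association}, following the template used for the $2$-Veronese in \autoref{ssec:2-veronese-interpolation} and motivated by Coble's approach. Given $13$ general points $\Gamma \subset \bp^{9}$, Gale duality produces an associated set $A(\Gamma) \subset \bp^{2}$ of $13$ points (since $\gamma = 13 = 9 + 2 + 2$). The goal is to construct a $3$-Veronese surface $v_{3}(\bp^{2}) \subset \bp^{9}$ containing $\Gamma$.

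The key geometric link between the two sides is the family of tricanonical genus $3$ curves inside a $3$-Veronese: these are precisely the images under $v_{3}$ of smooth plane quartics in $\bp^{2}$, and they are degree $12$, genus $3$ curves in $\bp^{9}$. Conversely, every smooth tricanonical genus $3$ curve in $\bp^{9}$ lies on a $3$-Veronese: if $C$ is such a curve, then $C$ is canonically embedded in $\bp^{2}$ as a plane quartic, and since plane quartics are canonical, the linear series $|3K_{C}|$ is cut out from $|\sco_{\bp^{2}}(3)|$ via the restriction isomorphism (the kernel consists of cubics vanishing on a plane quartic, hence is zero). So the embedding $C \hookrightarrow \bp^{9}$ factors, up to the $\PGL_{10}$ action on $\bp^{9}$, as $C \subset \bp^{2} \xrightarrow{v_{3}} \bp^{9}$. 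Thus the problem reduces to producing a tricanonical genus $3$ curve through the $13$ general points in $\bp^{9}$.

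One might hope to extract this immediately from \autoref{thm:NaskoEricDavid}, which gives interpolation for the full $108$-dimensional family of degree $12$, genus $3$ curves in $\bp^{9}$ (the numerical hypothesis $d \geq g + r = 12$ holds on the nose, and $(12,3,9)$ is not on the exception list). Through $13$ general points, this produces a $4$-dimensional family of degree $12$ genus $3$ curves. However, tricanonical curves form a codimension $3$ sublocus of dimension $105$, so extra input is needed to guarantee that the expected $1$-parameter family of tricanonical curves through $13$ general points is actually nonempty. This is precisely where association intervenes: by Goppa's theorem \autoref{thm:goppa} applied to a tricanonical curve $B$ with $H = 3K_{B}$, the associated linear series for $\Gamma \subset B$ is cut out by $|\Gamma - 2K_{B}|$, a $g^{2}_{5}$ on $B$ (degree $5$ and nonspecial for general $\Gamma$). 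This concretely relates $A(\Gamma)$ to $\Gamma$ through $B$, and the strategy is to reverse the construction: starting from $13$ general points $\Gamma' \subset \bp^{2}$ (to be identified with $A(\Gamma)$) together with a smooth plane quartic $B$ through them, exhibit an ambient linear system $W \subset H^{0}(\bp^{2}, \sco(d))$ that induces association for $\Gamma'$ and whose rational map to $\bp^{9}$ has image a $3$-Veronese. If such a $W$ is found, then $\phi_{W}(\Gamma') = \Gamma$ lies on the $3$-Veronese $\phi_{W}(\bp^{2})$, and we are done.

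The main obstacle will be identifying the correct ambient linear system $W$. The complete cubic system on $\bp^{2}$ tautologically produces $v_{3}(\bp^{2}) \subset \bp^{9}$ as its image, but by direct analogy with the $2$-Veronese case — where the complete system of conics is explicitly noted in \autoref{ssec:2-veronese-interpolation} not to induce association for $9$ general points — the naive cubic system should likewise fail to induce association for $13$ general points. Thus the crux of the argument will be to identify a suitable modification (for example, a linear system of cubics with base conditions tied to the quartic $B$, or constructed from the $g^{2}_{5}$ predicted by Goppa), verify via Goppa on $B$ that this modified system does realize association, confirm that the image is still a $3$-Veronese rather than some degenerate surface, and then handle the generality hypotheses with a specialization argument or a tangent-space / normal bundle computation to close the codimension-$3$ gap between the full genus $3$, degree $12$ Hilbert component and the tricanonical sublocus.
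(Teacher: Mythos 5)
Your proposal correctly identifies the overall strategy the paper uses---pass to the associated set $A(\Gamma)\subset\bp^{2}$, exploit Goppa's theorem to understand which linear system on $\bp^{2}$ must induce association, and try to realize that system by an ambient system whose image is a $3$-Veronese. You also correctly compute that Goppa applied to a tricanonically embedded genus~$3$ curve $B$ produces the $g^{2}_{5}$ series $|\Gamma - 2K_{B}|$. But at the decisive moment you stop and defer: you write that ``the crux of the argument will be to identify a suitable modification'' of the cubic system, and that one must ``handle the generality hypotheses with a specialization argument\ldots to close the codimension-$3$ gap.'' Those two deferred steps are the entire content of the theorem, and the proposal does not carry either of them out. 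Concretely, what is missing is: (a) the explicit ambient system---in the paper this is $H^{0}(\bp^{2},\sci_{T}^{3}(6))$, sextics with triple points at a \emph{singular triad} $T$, the three nodes of the singular quintic image of $B$ under the $g^{2}_{5}$ (Proposition~\ref{prop:6h-3x-3y-3z}); together with the reduction, via a Cremona transformation centered at the three residual basepoints of the pencil of quartics through $A(\Gamma)$, showing that this is the right replacement for the naive cubic system; and (b) the existence of a singular triad for $A(\Gamma)$ when $\Gamma$ is general, which is Theorem~\ref{theorem:DegenerateConfigurationInClosure} and occupies the bulk of Section~\ref{sec:degree-7-8-9}: it requires degenerating the $13$ points, blowing up $X\times\Delta$ along the lines of a coordinate triangle, computing cohomology on the resulting normal-crossing central fiber, and tracking the limits of the three residual basepoints. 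Without (a) and (b), the argument does not close.

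There is also a subtle circularity risk in the proposed reduction to ``produce a tricanonical genus~$3$ curve through $13$ general points in $\bp^{9}$.'' You correctly observe that Theorem~\ref{thm:NaskoEricDavid} does not give this (tricanonical curves are codimension~$3$ in the component of nonspecial degree-$12$, genus-$3$ curves), and you invoke association to supply it. But notice that in the paper this statement (Corollary~\ref{corollary:tricanonical}) is deduced \emph{from} $3$-Veronese interpolation, not used to prove it. If you want tricanonical interpolation as a lemma feeding into the Veronese result, you would need an independent proof of it, and the paper does not have one; the actual dependency runs the other way. So the outline, as written, is a plan with the two key technical ingredients missing and with its proposed intermediate step derived from the very theorem under proof.
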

\begin{proof}[Proof assuming \autoref{thm:bijectionTriadsVeronese} and \autoref{theorem:DegenerateConfigurationInClosure}]
	By \autoref{thm:bijectionTriadsVeronese}, for a general
	$\Gamma_{13} \in \Hilb_{13} \bp^2$, there is a bijection
	between singular triads for $\Gamma_{13}$ (defined below in \autoref{def:singulartriad})
	and $3$-Veronese surfaces containing the associated set $A(\Gamma_{13}) \subset \bp^{9}$. 
	By \autoref{theorem:DegenerateConfigurationInClosure},
	every such $\Gamma_{13}$ indeed possesses a singular triad.
\end{proof}

The essential tool used in proving \autoref{thm:3veroneseexistence} is association. 

Our next result relates the number of Veronese surfaces through $13$ general points to another, more tractable  enumerative problem.  Before stating it, we must make a definition. 

\begin{definition}\label{def:singulartriad}
Let $\Gamma \subset \bp^2$ be a general set of thirteen points in the plane. A subset $T = \{x,y,z\} \subset \bp^2 \setminus \Gamma$ of three distinct points is a {\sl singular triad} for $\Gamma$ if \[h^{0}(\bp^{2}, \so_{\bp^{2}}(5)\otimes \sci_{T}^{2}\sci_{\Gamma}) = 2.\]
\end{definition}

\begin{remark}
	In other words, $T = \{x,y,z\}$ is a singular triad for $\Gamma$ if there exists a pencil of quintic curves through $\Gamma$ and singular at $x, y,$ and $z$. A dimension count shows that we expect finitely many singular triads for a general set of thirteen points $\Gamma$, as is done in \autoref{lem:dimensionPhi}.
\end{remark}

Our second result is:

\begin{theorem}[Enumeration]\label{thm:3veronesenumber}
The number of $3$-Veronese surfaces through a general set of thirteen points in $\bp^{9}$ is equal to the number of singular triads for a general set of thirteen points in $\bp^{2}$.
\end{theorem}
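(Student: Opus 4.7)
The plan is to construct an explicit bijection between singular triads for $\Gamma := A(\Gamma_{13})$, the associated set in $\bp^2$ of the thirteen general points $\Gamma_{13} \subset \bp^9$, and 3-Veronese surfaces $X \subset \bp^9$ containing $\Gamma_{13}$. The main tools will be Goppa's theorem (\autoref{thm:goppa}) and the observation that the linear system of sextics with triple base points at three noncollinear points factors through the standard quadratic Cremona transformation.

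\emph{Forward direction} ($T \mapsto X_T$): given a singular triad $T$ for $\Gamma$, consider the ambient linear system
\[ W_T := H^{0}(\bp^{2}, \so_{\bp^2}(6) \otimes \sci_{T}^{3}) \]
of plane sextics with triple points at each point of $T$. A standard count gives $\dim W_T = 28 - 18 = 10$, the triple-point conditions being independent for generic $T$. Since $|6H - 3T| = |3(2H - T)|$ and $|2H - T|$ is the system of conics through $T$ that defines the standard quadratic Cremona involution $\sigma_T \from \bp^{2} \dashrightarrow \bp^{2}$, the map $\phi_{W_T} \from \bp^2 \dashrightarrow \bp^9$ factors as $\nu_3 \circ \sigma_T$, where $\nu_3$ is the 3-Veronese; hence $X_T := \phi_{W_T}(\bp^2) \subset \bp^9$ is a 3-Veronese surface. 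To verify $\Gamma_{13} \subset X_T$, I apply Goppa's theorem to a general quintic $B$ in the pencil of plane quintics through $\Gamma$ singular at $T$. Its normalization $\tilde B$ is a smooth genus $3$ curve containing $\Gamma$, embedded in $\bp^2$ by the complete nonspecial degree $5$ linear system $|H|$. Letting $D$ denote the degree $6$ preimage of $T$ on $\tilde B$, adjunction gives $K_{\tilde B} \sim 2H - D$, while a second quintic in the pencil cuts out $\Gamma + 2D$ on $\tilde B$, so $\Gamma \sim 5H - 2D$ there. Combining,
\[ K_{\tilde B} + \Gamma - H \sim (2H - D) + (5H - 2D) - H = 6H - 3D, \]
which is precisely the restriction of $W_T$ to $\tilde B$ (both have dimension $10$, so the restriction is surjective). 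By Goppa the system induces association for $\Gamma$, so $\phi_{W_T}(\Gamma) = A(\Gamma) = \Gamma_{13}$ in $\bp^9$ up to projective equivalence; in particular $\Gamma_{13} \subset X_T$.

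\emph{Backward direction} ($X \mapsto T_X$): given a 3-Veronese $X \subset \bp^9$ containing $\Gamma_{13}$, set $\tilde\Gamma := \nu_3^{-1}(\Gamma_{13}) \subset \bp^2 \cong X$. If $X = X_T$, then $\sigma_T$ restricts to a bijection $\Gamma \to \tilde\Gamma$, so the plan is to recover $T$ as the base locus of a unique quadratic Cremona taking $\Gamma$ to $\tilde\Gamma$. Explicitly, the pullback of $H^0(X, \so_X(1))$ along any birational map $\bp^2 \dashrightarrow \bp^2 \cong X$ sending $\Gamma \to \tilde\Gamma$ is a $10$-dimensional subsystem of $H^0(\bp^2, \so(d))$ for some $d$, and matching dimensions against the Goppa calculation above forces $d = 6$ and the subsystem to be of the form $|6H - 3T_X|$ for a unique triad $T_X$; the requirement that this subsystem have the correct dimension is equivalent to the singularity condition $h^{0}(\so(5) \otimes \sci_{T_X}^{2} \sci_{\Gamma}) = 2$.

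\emph{Bijectivity and main obstacle}: injectivity of $T \mapsto X_T$ follows because distinct triads produce Cremona images that are projectively inequivalent on the target $\bp^2$ for generic $\Gamma$, and hence distinct Veronese surfaces in $\bp^9$; the backward map is then inverse by construction. The main technical obstacle I anticipate is rigorously carrying out the backward direction -- showing that the $10$-dimensional pullback linear system on $\bp^2$ always has the form $|6H - 3T_X|$ for a genuine triad $T_X$, and that $T_X$ is indeed a singular triad for $\Gamma$. I expect this to be handled by analyzing the base locus of the pullback subsystem on $\bp^2$ and then re-deriving the identity $K_{\tilde B} + \Gamma - H \sim 6H - 3D$ in reverse, extracting the quintic pencil and hence the triad from the ambient system.
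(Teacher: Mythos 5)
Your forward direction matches the paper's almost exactly: both you and the paper prove Proposition~\ref{prop:6h-3x-3y-3z} by applying Goppa's theorem to a general quintic $B$ in the pencil through $\Gamma$ singular at $T$, and both check that $K_{\widetilde B}+\Gamma-H$ is cut out by the sextics triple at $T$ and that none of these sextics vanishes on $B$. Your divisor arithmetic ($K_{\widetilde B}\sim 2H-D$, $\Gamma\sim 5H-2D$, where $D$ is the degree-$6$ preimage of the nodes) is the same adjoint-system computation the paper carries out in slightly less explicit language. So this half of the argument is sound and follows the paper's route.

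The genuine gap is in your backward direction, and you have (rightly) flagged it as the main obstacle. Your plan presupposes a birational map $\bp^{2}\dashrightarrow\bp^{2}\cong X$ carrying $\Gamma=A(\Gamma_{13})$ to $\widetilde\Gamma=\nu_{3}^{-1}(\Gamma_{13})$ and then hopes to read off $T_X$ by matching dimensions, but there is no a priori reason such a map exists, and dimension-matching on its own cannot force $d=6$ or force the subsystem to have the shape $|6H-3T_X|$. The paper's missing ingredient is the explicit construction in \autoref{section:howsingtriadsArise}: pull $\Gamma_{13}$ back to $\widetilde\Gamma\subset\bp^{2}$, take the \emph{unique pencil of quartics} through these thirteen points, observe that its residual base locus is three (generically noncollinear) points $\{p,q,r\}$, and apply the quadratic Cremona $\alpha_{\{p,q,r\}}$. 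The exceptional set $T$ of this Cremona is a singular triad for $\alpha(\widetilde\Gamma)$, and Proposition~\ref{prop:6h-3x-3y-3z} run in reverse identifies $\alpha(\widetilde\Gamma)$ with $A(\Gamma_{13})$. The quartic pencil is the idea absent from your proposal. Your injectivity argument is also vaguer than it needs to be: rather than ``projectively inequivalent Cremona images,'' the paper (Proposition~\ref{prop:distinctTriads}) notes that the composite rational map $\bp^{2}\dashrightarrow\bp^{9}$ is determined by the Veronese, and its indeterminacy locus is precisely $T$, so distinct triads give maps with distinct indeterminacy loci and hence distinct Veroneses.
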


\autoref{thm:3veronesenumber} points to an interesting enumerative problem on the Hilbert scheme ${\rm Hilb}_{3}(\bp^{2})$ of degree $3$, zero dimensional subschemes of the plane.  We discuss this problem at the end of the section, in 
\autoref{ssec:enumerating-singular-triads}.

\subsection{How singular triads arise}\label{section:howsingtriadsArise} For the benefit of the reader, we briefly explain how singular triads arise in the problem of enumerating $3$-Veronese surfaces through a general set $\Gamma_{13}$.  

Suppose $V_{3} \subset \bp^{9}$ is a $3$-Veronese containing $\Gamma_{13}$.   If we consider $V_{3}$ as isomorphic to $\bp^{2}$, it is tempting, as in the case of the $2$-Veroneses, to think that the linear system $|3H|$ on $\bp^{2}$ would induce association for $\Gamma_{13} \subset \bp^{2}$.  However, this turns out not to be the case. 

In $\bp^{2}$, there is a unique pencil of quartic curves $Q_{t} \subset \bp^{2}$, $t \in \bp^{1}$, containing $\Gamma_{13}$.  Assuming the configuration $\Gamma_{13}$ is general, the pencil $Q_{t}$ will have three remaining, noncollinear basepoints $\{p,q,r\}$.  Let \[\alpha_{\{p,q,r\}} \from \bp^{2} \dashrightarrow \bp^{2}\] be the Cremona transformation centered on the set $\{p,q,r\}$, and let $T = \{x,y,z\}$ be the exceptional set 
in the target $\bp^{2}$. Then $\alpha(Q_{t})$ is a pencil of quintic curves, singular at $T$ and containing $\alpha(\Gamma_{13})$ in its base locus.  In other words, $T$ forms a singular triad for $\alpha(\Gamma_{13})$. In the next section, we will show that the ambient system of sextics having triple points at $x,y$, and $z$ induces association for $\alpha(\Gamma_{13})$.  In other words, the ``naive'' system of cubics on the source $\bp^{2}$ induces association {\sl not} for $\Gamma_{13}$, but rather for $\alpha(\Gamma_{13})$.

\subsection{Inducing association from a singular triad} 

We begin with a lemma describing the base locus of a pencil through $13$
points, whose proof is straightforward.
\begin{lemma}\label{lem:quinticbaselocus}
	Assume $\Gamma_{13} \subset \bp^2$ is a general set of $13$ points, and suppose $T = \{x,y,z\}$ is a singular triad for $\Gamma_{13}$, i.e. there exists a pencil of quintics $Q_{t}$ through $\Gamma_{13}$, singular at $x,y,z$. Furthermore, assume that the general element of the pencil has a smooth genus $3$ normalization, and has ordinary nodes at $x,y,z$. In particular, this implies $T$ is not contained in a line.  
Then, the scheme theoretic base locus of the pencil $Q_{t}$ consists of $\Gamma_{13}$ and three length four schemes supported on $x,y$ and $z$.
\end{lemma}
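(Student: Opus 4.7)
The plan is to verify the claim by a clean Bezout count, taking advantage of the fact that every local contribution is bounded below by a known quantity and the bounds add up to exactly $5 \cdot 5 = 25$.

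First I would select two general distinct members $Q_0$ and $Q_\infty$ of the pencil. By hypothesis, the general member has a smooth genus~$3$ normalization, hence is irreducible; so a general pair $Q_0, Q_\infty$ shares no common component. The scheme-theoretic base locus of the pencil is then exactly the intersection scheme $Q_0 \cap Q_\infty$ (which is independent of the choice of two generating members), and by Bezout it has total length $25$.

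Second I would record the obvious local lower bounds. At each of the $13$ points of $\Gamma_{13}$, both $Q_0$ and $Q_\infty$ are smooth (since by assumption their only singularities lie at $x,y,z$), so the local intersection multiplicity is at least~$1$. At each of $x, y, z$, both curves have ordinary nodes, so by the standard fact that two planar curves of multiplicities $m, n$ at a common point meet there with multiplicity at least $mn$, the local intersection multiplicity at each of $x,y,z$ is at least $2 \cdot 2 = 4$. The sum of these lower bounds is
\[
  13 \cdot 1 \;+\; 3 \cdot 4 \;=\; 25,
\]
which already matches the Bezout total. Consequently equality must hold at every one of these sixteen points and there are no further points in the base locus. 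In particular each $\Gamma_{13}$ point appears reduced, and the local contribution at each of $x,y,z$ is a length~$4$ subscheme supported on that single point, proving the claim.

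The only delicate point is justifying the lower bound $4$ at $x,y,z$; by Bezout it is automatically attained, so no further analysis is needed. One may also note (as a sanity check) that the length-$4$ local scheme at $x$ has the explicit description $\Spec \mathcal O_{\bp^2, x}/(q_0, q_\infty)$ up to higher-order terms, where $q_0, q_\infty$ are the tangent cones at $x$; these must be linearly independent quadratic forms, for otherwise some member of the pencil would have multiplicity $\geq 3$ at $x$, and a second application of Bezout (or the genus bound on the normalization) would give a contradiction. This confirms that the length-$4$ scheme is the intersection of two transverse conic germs at each of $x,y,z$, matching the geometric picture used in the next section.
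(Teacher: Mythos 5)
Your Bezout argument is correct, and since the paper declares this lemma's proof ``straightforward'' and omits it entirely, your write-up is exactly the kind of verification it has in mind: two general members $Q_0, Q_\infty$ are irreducible (hence share no component), their intersection has total length $25$, and the local lower bounds $1$ at each of the $13$ disjoint points of $\Gamma_{13}$ and $2\cdot 2 = 4$ at each of the three nodes already sum to $25$, forcing equality everywhere and ruling out any further base points. The logic correctly avoids having to verify transversality of the tangent cones at $x,y,z$ directly, since Bezout forces the local multiplicity down to exactly $4$.

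One small imprecision in your optional ``sanity check'': linear independence of the two quadratic tangent cones $q_0, q_\infty$ at $x$ is necessary but \emph{not} sufficient for the local intersection multiplicity to equal $4$; you also need the two nodes to share no tangent line (e.g.\ $q_0 = uv$ and $q_\infty = u(u+v)$ are independent but give local multiplicity $>4$). The ``$\geq 3$ multiplicity'' argument you sketch rules out proportional tangent cones but not a shared linear factor. Since you explicitly flag this paragraph as a consistency check rather than part of the proof, and since the main Bezout count already forces the conclusion without it, this does not affect the correctness of the argument --- but if you wanted the explicit local description you would deduce the ``no common tangent'' condition \emph{from} the Bezout equality rather than trying to establish it independently.
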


\begin{proposition}\label{prop:6h-3x-3y-3z}
	In the setting of \autoref{lem:quinticbaselocus}, the ten dimensional vector space \[W := H^{0}(\bp^{2}, \sci_{T}^{3}(6)) \subset H^{0}(\bp^{2}, \so(6))\] consisting of sextics having triple points at $x,y$ and $z$ induces association for $\Gamma_{13}$.
\end{proposition}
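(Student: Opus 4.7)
The plan is to apply Goppa's Theorem (\autoref{thm:goppa}) to the normalization $\pi \from B \to Q \subset \bp^2$ of a general quintic $Q$ in the pencil. Because $Q$ has three ordinary nodes at $T$ and geometric genus $3$, the curve $B$ is smooth of genus $3$, and the induced map $B \to \bp^2$ is given by the line bundle $H := \pi^{*}\sco_{\bp^2}(1)$, which has degree $5$ and is therefore nonspecial and complete (Riemann--Roch gives $h^{0}(H)=3$). Thus Goppa applies: association for $\Gamma \subset B$ (which makes sense since $\Gamma \subset Q \setminus T$ lifts canonically to $B$) is induced by the $10$-dimensional linear system $|K_{B} + \Gamma - H|$ of degree $12$ on $B$.

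Next I would identify $|K_{B}+\Gamma-H|$ with $|3K_{B}|$. Using the adjunction formula for plane curves with ordinary nodes, $K_{B} \sim 2H - E_{T}$, where $E_{T}$ is the sum of the six preimages in $B$ of the points of $T$. Then $K_{B}+\Gamma-H \sim 3K_{B}$ is equivalent to the linear equivalence $\Gamma + 2E_{T} \sim 5H$ on $B$, which I would verify by pulling back any other member $Q' \in |Q_{t}|$ to $B$: at each point of $\Gamma$ the intersection $Q \cdot Q'$ contributes $1$, while at each node $x \in T$ the two branches of $Q$ each meet $Q'$ (which has a node there with distinct tangent cone, by generality) with multiplicity $2$, giving the claimed divisor of total degree $25 = 5 \cdot 5$.

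I would then show that the restriction map $W \to H^{0}(B, 3K_{B})$ is an isomorphism. A local calculation at each node $x \in T$ shows that a sextic with a triple point at $x$ pulls back to a section of $6H$ on $B$ vanishing to order at least $3$ along each branch of $Q$ through $x$, so the restriction lands in $H^{0}(B, 6H - 3E_{T}) = H^{0}(B, 3K_{B})$. Injectivity holds because the kernel consists of sextics vanishing on $Q$, i.e., those of the form $Q \cdot L$ for a linear form $L$, and such a product has triple points throughout $T$ only if $L$ vanishes on $T$, which is impossible since $T$ is not collinear by \autoref{lem:quinticbaselocus}. Since both sides have dimension $10$, the restriction is an isomorphism.

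Finally, since $3K_{B}$ is very ample on the genus $3$ curve $B$, it is base-point-free, so every point of $\Gamma \subset B$ has some sextic in $W$ not vanishing there; hence the base locus of $W$ in $\bp^2$ is disjoint from $\Gamma$. Combined with the previous step, Goppa's Theorem yields that $W$ induces association for $\Gamma_{13}$. I expect the main subtlety to lie in the intersection-multiplicity bookkeeping at the nodes used to establish $\Gamma + 2E_{T} \sim 5H$, rather than in the formal structure of the Goppa argument, which is a routine packaging once the line bundle identification is in hand.
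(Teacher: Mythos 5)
Your argument is correct and follows the paper's proof essentially verbatim: both apply Goppa's theorem to the genus-$3$ normalization of a general quintic in the pencil, both use adjunction ($K_{\widetilde Q} \sim 2H - E_T$) together with the base-locus description from \autoref{lem:quinticbaselocus} to identify $|K_{\widetilde Q} + \Gamma_{13} - H|$ with the system $6H - 3E_T$ cut out by sextics triple at $T$, and both use the non-collinearity of $T$ to rule out a sextic triple at $T$ vanishing identically on $Q$. Your reframing of the target as $|3K_{\widetilde Q}|$, the explicit branch-multiplicity check giving $\Gamma_{13} + 2E_T \sim 5H$, and the base-point-freeness observation are more detailed than the paper's phrasing but amount to the same computation.
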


\begin{proof}
	We use Goppa's theorem, \autoref{thm:goppa}. Pick a general quintic $Q$ in the pencil $Q_{t}$, and let $\nu \from \td{Q} \to Q$ denote the smooth genus $3$ normalization.
	Let $H$ denote the hyperplane divisor class on $\bp^{2}$. 
	Note that by degree considerations and Riemann-Roch, the divisor class $H$ is nonspecial on $\td{Q}$, and $\td{Q}$ is mapped 
		via the complete linear series $|H|$. We claim that the linear system $|K_{\td{Q}} + \Gamma_{13} - H|$ from Goppa's theorem is induced by sextic curves triple at $x,y$ and $z$.

Indeed, the canonical series $|K_{\td{Q}}|$ is cut out by the adjoint series consisting of conics passing through the nodes $x,y,$ and $z$.  By \autoref{lem:quinticbaselocus}, the divisor $\Gamma_{13}$ is cut out by a quintic singular at $x,y,z$.  Putting these together says that sextics having triple points at $x,y,$ and $z$ cut out divisors in the linear system $|K_{\td{Q}} + \Gamma_{13} - H|$ on $\td{Q}$. 

Finally, notice that there cannot be a sextic triple at $x,y$ and $z$ which also vanishes identically on $Q$ -- the residual curve would be a line containing $T$, but we are assuming $x,y,z$ are not collinear. Therefore, the system of sextics having triple points at $x,y,$ and $z$ cuts out the complete linear system $|K_{\td{Q}} + \Gamma_{13} - H|$. 
\end{proof}

\subsection{The bijection between singular triads and Veroneses} Let $\Gamma_{13} \subset \bp^{9}$ be thirteen general points, and let $A(\Gamma_{13}) \subset \bp^{2}$ denote the associated set. 

We have already seen in \autoref{section:howsingtriadsArise} that a $3$-Veronese $V_{3}$ containing $\Gamma_{13}$ arises from a singular triad $T$ for $A(\Gamma_{13})$. Let us now show that distinct triads provide distinct Veroneses.

\begin{proposition}\label{prop:distinctTriads}
Maintain the setting above. Distinct triads $T$ and $T'$ for $A(\Gamma_{13})$ give rise to distinct Veronese surfaces $V_{3}$ and $V'_{3}$ containing $\Gamma_{13}$. 
\end{proposition}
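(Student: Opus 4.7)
The plan is to canonically reconstruct the triad $T$ from the pair $(V_{3}, \Gamma_{13})$, which forces distinct triads to yield distinct Veronese surfaces.

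I would first verify the geometric factorization of $\phi_W$. On the blowup $S_T := \mathrm{Bl}_T \bp^2$, the linear system $W$ cuts out the divisor class $6H - 3E_1 - 3E_2 - 3E_3 = 3(2H - E_1 - E_2 - E_3)$, where $|2H - E_1 - E_2 - E_3|$ is the system of conics through $T$, defining the standard Cremona $\alpha_T \from \bp^2 \dashrightarrow \bp^2$. Hence $\phi_W = \iota \circ \alpha_T$, where $\iota$ is the $3$-Veronese embedding, and the image of $\phi_W$ is $V_{3}$. The analogous factorization $\phi_{W'} = \iota' \circ \alpha_{T'}$ holds; since any two $3$-Veronese parametrizations of $V_3 = V_3'$ differ by a $\PGL_3$ on the source $\bp^2$, I may absorb the discrepancy into $\alpha_{T'}$ and assume $\iota = \iota'$.

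Next I claim $\alpha_T$ and $\alpha_{T'}$ share the same exceptional triad $T'' \subset \bp^2_{\mathrm{tgt}}$. Setting $\Delta_{13} := \iota^{-1}(\Gamma_{13})$, the pencil of quintics through $A(\Gamma_{13})$ singular at $T$ transforms under $\alpha_T$ into the unique pencil of quartics through $\Delta_{13}$ (unique because $\binom{6}{2} - 13 = 2$), and its three extra base points are the exceptional triad of $\alpha_T$. Since this triad depends only on $\Delta_{13}$, the same $T''$ arises for $\alpha_{T'}$. Consequently, both $S_T$ and $S_{T'} := \mathrm{Bl}_{T'}\bp^2_{\mathrm{src}}$ are canonically identified with $\mathrm{Bl}_{T''}\bp^2_{\mathrm{tgt}}$ via their Cremona contractions $\mu_T, \mu_{T'}$, yielding a canonical isomorphism $\psi \from S_{T'} \xrightarrow{\sim} S_T$ satisfying $\mu_T \circ \psi = \mu_{T'}$.

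Since $\psi$ commutes with the maps to $\bp^2_{\mathrm{tgt}}$, it exchanges the three exceptional divisors over $T''$ between the two sides; the remaining three $(-1)$-curves are then also exchanged, so $\psi(E_{T'}^{(i)})$ equals some $E_T^{(j)}$. Therefore $\pi_T \circ \psi \from S_{T'} \to \bp^2_{\mathrm{src}}$ contracts each fiber of $\pi_{T'}$ to a point, and descends to a morphism $\sigma \from \bp^2_{\mathrm{src}} \to \bp^2_{\mathrm{src}}$. A routine Picard-group computation using the identity $H_T = 2H_{\mathrm{tgt}} - L_T^{(12)} - L_T^{(13)} - L_T^{(23)}$ yields $\psi^* H_T = H_{T'}$, hence $\sigma^* \sco(1) = \sco(1)$, so $\sigma \in \PGL_3$. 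By construction $\sigma(T') = T$, and $\sigma$ setwise stabilizes $A(\Gamma_{13})$ since both $\phi_W$ and $\phi_{W'}$ map $A(\Gamma_{13})$ to $\Gamma_{13}$. To conclude, I would invoke the genericity of $\Gamma_{13}$ to argue that $A(\Gamma_{13})$ has trivial $\PGL_3$-stabilizer: for any permutation $\pi$ of a general set of thirteen points, the conditions $\sigma(p_i) = p_{\pi(i)}$ on any four of the points determine $\sigma \in \PGL_3$ uniquely, and the remaining nine conditions are generically inconsistent unless $\pi = \mathrm{id}$, in which case $\sigma = \mathrm{id}$. Therefore $\sigma = \mathrm{id}$, forcing $T = \sigma(T') = T'$, contradicting $T \neq T'$. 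The main technical obstacle I anticipate is the intermediate step of identifying $T''$ intrinsically via the quartic pencil, which requires careful handling of the Cremona transformation rules; the subsequent del Pezzo geometry then proceeds formally.
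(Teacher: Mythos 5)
Your proof is correct, and it takes a genuinely different --- and considerably more detailed --- route than the paper. The paper's argument is very terse: after identifying $\Sym^3 W$ and $\Sym^3 W'$ with $V$, it asserts that equality $V_3 = V_3'$ would force the rational maps $\nu'\circ\alpha\circ\iota$ and $\nu'\circ\iota'$ (from $\bp^2$ to $\bp^9$) to coincide, and then reads off the indeterminacy loci $T$ and $T'$ to conclude $T = T'$. The nontrivial point --- why equality of the \emph{images} of $\phi_W$ and $\phi_{W'}$ should promote to equality of the \emph{rational maps} themselves --- is not spelled out there. Your proof supplies exactly this missing content: you reduce to comparing the two Cremona factors $\alpha_T, \alpha_{T'}$, show they share the residual exceptional triad $T''$ (via the quartic pencil through $\Delta_{13}$), use the degree-$6$ del Pezzo $\Bl_{T''}\bp^2_{\mathrm{tgt}}$ to descend the canonical isomorphism $\psi$ of the two blowups to a $\sigma\in\PGL_3$ carrying $T'$ to $T$, and then kill $\sigma$ by genericity. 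This is more work than the paper writes down, but it is a complete argument where the paper relies on the reader to close a gap.

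One simplification is available in your last step, and it is worth noting because you flag it as a potential obstacle. Both $\phi_W$ and $\phi_{W'}$ restrict on $A(\Gamma_{13})$ to the \emph{same} map to $\bp^9$ (this is exactly what ``inducing association'' gives, after the canonical identification of $\Sym^3 W$ and $\Sym^3 W'$ with $V$). Consequently, after your $\PGL_3$ absorption making $\iota = \iota'$, the two Cremonas satisfy $\alpha_T|_{A(\Gamma_{13})} = \alpha_{T'}|_{A(\Gamma_{13})}$ \emph{pointwise}, not merely setwise. Tracing through your construction of $\sigma$ via $\sigma\circ\pi_{T'} = \pi_T\circ\psi$, one finds $\sigma$ fixes each point of $A(\Gamma_{13})$ (provided $A(\Gamma_{13})$ avoids the triangles, which holds generically). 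An element of $\PGL_3$ fixing four points in general position is the identity, so the permutation analysis and the trivial-stabilizer claim for thirteen general points are unnecessary. Everything else in your proof --- the transformation rule sending quintics nodal at $T$ to quartics through $T''$, the uniqueness of the quartic pencil through $\Delta_{13}$, the $(-1)$-curve combinatorics on the degree-$6$ del Pezzo, and the Picard computation $\psi^* H_T = H_{T'}$ --- checks out.
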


\begin{proof}
Let $W = H^{0}(\bp^{2}, \so_{\bp^{2}}(2)\otimes\sci_{T})$ and $W' = H^{0}(\bp^{2}, \so_{\bp^{2}}(2)\otimes\sci_{T'})$ be the vector spaces of conics passing through $T$ and $T'$ respectively.  

Denote by $\iota \from \bp^{2} \dashrightarrow \bp(W)$ and $\iota' \from \bp^{2} \dashrightarrow \bp(W')$ the Cremona maps associated to $W$ and $W'$. 

By \autoref{prop:6h-3x-3y-3z}, the vector spaces $\Sym^{3}W$ and $\Sym^{3}W'$ both induce association for $A(\Gamma_{13})$, so we identify them as the ten dimensional vector space $V$ giving the original embedding $\Gamma_{13} \subset \bp^{9}$. 

Let $\nu \from \bp(W) \hookrightarrow \bp^{9}$ and $\nu' \from \bp(W') \hookrightarrow \bp^{9}$ denote the respective Veronese maps. (Note that the target $\bp^{9}$ for the maps $\nu$ and $\nu'$ are the ``same'' given the previous paragraph.) 

The two Veronese surfaces $\bp(W)$ and $\bp(W')$ would be the same if and only if there existed an isomorphism $\alpha \from \bp(W) \to \bp(W')$ such that $\nu' \circ \alpha \circ \iota = \nu' \circ \iota'$ as rational maps from $\bp^{2}$ to $\bp^{9}$. 

But the indeterminacy locus of a rational map is determined by the map, and the indeterminacy locus of $\nu' \circ \alpha \circ \iota$ is $T$, whereas the indeterminacy locus of $\nu' \circ \iota'$ is $T'$. This completes the proof.
\end{proof}

\begin{theorem}\label{thm:bijectionTriadsVeronese}
Let $\Gamma_{13} \subset \bp^{9}$ be a general set of thirteen points.  Then the $3$-Veronese surfaces containing $\Gamma_{13}$ are in bijection with the singular triads for $A(\Gamma_{13}) \subset \bp^{2}$.
\end{theorem}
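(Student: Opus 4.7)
The plan is to leverage the two preceding propositions, which already give most of the desired bijection: \autoref{prop:6h-3x-3y-3z} constructs, from each singular triad $T$ for $A(\Gamma_{13})$, a $3$-Veronese $V_T \subset \bp^9$ through $\Gamma_{13}$ (namely, the image of $\bp^2$ under the ten-dimensional system of sextics triple at $T$), and \autoref{prop:distinctTriads} shows this assignment is injective. All that remains is surjectivity: every $3$-Veronese $V_3 \supset \Gamma_{13}$ arises as $V_T$ for some singular triad $T$.

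To build the inverse map, I would follow the heuristic of \autoref{section:howsingtriadsArise}. Given $V_3 \supset \Gamma_{13}$, fix an isomorphism $V_3 \cong \bp^2$ and let $\Gamma' \subset \bp^2$ be the preimage of $\Gamma_{13}$, a general set of $13$ points. Since plane quartics form a $14$-dimensional linear system, $\Gamma'$ lies in a unique pencil $Q_t$ of quartics, and by B\'ezout the residual base locus consists of three points $\{p,q,r\}$, distinct and noncollinear by genericity. Let $\alpha \from \bp^2 \dashrightarrow \bp^2$ be the Cremona transformation centered at $\{p,q,r\}$, with exceptional triad $T = \{x,y,z\}$ in the target. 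Then $\alpha(Q_t)$ is a pencil of quintics singular at $T$ and containing $\alpha(\Gamma')$, so $T$ is a singular triad for $\alpha(\Gamma')$.

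Two identifications then close the loop. First, $\alpha(\Gamma')$ represents $A(\Gamma_{13})$: by \autoref{prop:6h-3x-3y-3z} the sextics triple at $T$ induce association for $\alpha(\Gamma')$, and by construction they map $\alpha(\Gamma')$ onto $\Gamma_{13} \subset V_3 \subset \bp^9$; uniqueness of association then forces $\alpha(\Gamma')$ to agree with $A(\Gamma_{13})$ up to $\PGL_3$. Second, the Veronese $V_T$ built from $T$ via \autoref{prop:6h-3x-3y-3z} equals $V_3$: on the common blow-up $\on{Bl}_{\{p,q,r\}}\bp^2 \cong \on{Bl}_{\{x,y,z\}}\bp^2$, both the linear system $|3H|$ pulled back from the source and the linear system $|6\widetilde H - 3E_x - 3E_y - 3E_z|$ pulled back from the target yield the same divisor class, hence the same ten-dimensional space of sections, and so define the same surface in $\bp^9$.

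The main obstacle is essentially bookkeeping: one must verify the genericity hypotheses of \autoref{lem:quinticbaselocus} and \autoref{prop:6h-3x-3y-3z} throughout the reverse construction, namely that $\{p,q,r\}$ is reduced and noncollinear, that $\alpha$ is genuinely birational, and that the pencil $\alpha(Q_t)$ consists of quintics with smooth genus $3$ normalizations and ordinary nodes at $T$. Each is an open condition on $\Gamma_{13}$ and is visibly satisfied by the $V_T$ produced from a singular triad, so by openness these hypotheses hold for every $3$-Veronese through a sufficiently general $\Gamma_{13}$, which suffices.
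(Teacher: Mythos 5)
Your proposal is correct and mirrors the paper's own argument (which is stated tersely as a one-line deduction from Section~\ref{section:howsingtriadsArise} and \autoref{prop:distinctTriads}): the forward map comes from \autoref{prop:6h-3x-3y-3z}, injectivity from \autoref{prop:distinctTriads}, and surjectivity from the Cremona construction of \autoref{section:howsingtriadsArise}. You fill in slightly more detail than the paper, in particular the blow-up computation verifying that $V_T = V_3$, so the forward and backward maps are genuinely inverse.
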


\begin{proof}
This follows immediately from \autoref{section:howsingtriadsArise} and \autoref{prop:distinctTriads}.
\end{proof}

\subsection{Existence of singular triads}
\label{ssec:existence-of-singular-triads}

\begin{definition}
Define $\Phi \subset \Hilb_{3}\bp^{2} \times \Hilb_{13}\bp^{2}$ to be the closure of the set of 
pairs 
$(\{x,y,z\}, \Gamma_{13}) \subset \Hilb_3\bp^2 \times \Hilb_{13} \bp^2$ for which $\left\{ x, y, z \right\}$ is disjoint from the support of $\Gamma_{13}$, and for which there exists a pencil of quintics singular at $x,y,z$ whose base locus is precisely $\left\{ x,y,z \right\} \cup \Gamma_{13}$. 
Define the projections
\begin{equation}
	\nonumber
	\begin{tikzcd}
		\qquad & \Phi \ar {ld}{\pi_1} \ar {rd}{\pi_2} & \\
		\Hilb_3 \bp^2 && \Hilb_{13} \bp^2.
	 \end{tikzcd}\end{equation}
\end{definition}

\begin{theorem}\label{theorem:DegenerateConfigurationInClosure} There exists a point $(\{x,y,z\}, A_{13}) \in \Phi$ which is isolated in its fiber under the second projection $\pi_{2} \from \Phi \to \Hilb_{13}\bp^{2}$. In particular, $\pi_{2}$ is dominant, and a general set $\Gamma_{13}$ possesses a singular triad.
\end{theorem}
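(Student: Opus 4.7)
The plan is to exhibit an explicit point $(\{x,y,z\}, A_{13}) \in \Phi$ which is isolated in its fiber under $\pi_2$, and then conclude via upper semicontinuity and a dimension count. As a preliminary, I would establish that each component of $\Phi$ has dimension at least $26 = \dim \Hilb_{13}\bp^2$. Indeed, the defining condition $h^0(\bp^2, \sci_T^2 \sci_{\Gamma_{13}}(5)) \geq 2$ asks that the evaluation map $H^0(\bp^2, \sco(5)) \to \sco_{T^{(2)} \cup \Gamma_{13}}$, a map between spaces of dimensions $21$ and $22$, drop in rank to at most $19$. By the standard codimension bound for determinantal loci, this condition has codimension at most $(21 - 19)(22 - 19) = 6$ in $\Hilb_3\bp^2 \times \Hilb_{13}\bp^2$ (whose total dimension is $32$), so every component of $\Phi$ has dimension $\geq 26$.

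For the construction, I use the Cremona recipe described in \autoref{section:howsingtriadsArise}. I would choose a pencil $\mathcal{Q}_0$ of plane quartics whose $16$ base points are reduced and in sufficiently general position; for explicit control one could take the pencil generated by two quartics that factor as products of four general lines each, giving a transverse $4 \times 4$ grid of base points. Selecting three of the base points $\{p, q, r\}$ and letting $\alpha$ denote the Cremona transformation centered there, the image pencil $\alpha(\mathcal{Q}_0)$ is a pencil of quintics nodal at the triad $\{x, y, z\}$ (the Cremona images of the exceptional lines $L_{pq}, L_{pr}, L_{qr}$), with $13$ remaining simple base points forming the set $A_{13}$. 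By construction, $(\{x, y, z\}, A_{13}) \in \Phi$.

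The crucial step is verifying that $(\{x, y, z\}, A_{13})$ is isolated in its fiber $\pi_2^{-1}(A_{13})$. This amounts to showing that any first-order deformation of the triad preserving membership in $\Phi$ over the fixed $A_{13}$ is trivial. A natural approach is to exploit the Cremona correspondence: for any triad $T'$, being a triad for $A_{13}$ is equivalent to the $16$-point configuration $\alpha_{T'}^{-1}(A_{13}) \cup \{$Cremona centers of $\alpha_{T'}\}$ lying on a pencil of quartics. This is a codimension-$6$ condition on the $6$-dimensional parameter space of triads, so for generic enough starting data, the set of solutions is finite; one then verifies explicitly that at the particular point $\{x, y, z\}$ produced by the Cremona construction, the solution set is reduced and isolated. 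An alternative route is to compute the tangent space to the fiber via the cohomology of $\sci_T^2 \sci_{A_{13}}(5)$ under infinitesimal movement of $T$ and show it vanishes for a suitable choice of $\mathcal{Q}_0$.

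The main obstacle will be this isolation verification. The choice of $\mathcal{Q}_0$ must be balanced: special enough to permit explicit computation of the pencil of quintics and its singular points, but generic enough to preclude extra families of triads produced by unwanted symmetries or coincidences among the $4 \times 4$ grid. Once isolation at a single point is in hand, upper semicontinuity of fiber dimension implies that on the component of $\Phi$ containing $(\{x, y, z\}, A_{13})$, a dense open subset has $0$-dimensional fibers under $\pi_2$; combined with the lower bound $\dim \Phi \geq 26 = \dim \Hilb_{13}\bp^2$, this forces $\pi_2$ to be dominant on that component, so a general $\Gamma_{13}$ possesses a singular triad.
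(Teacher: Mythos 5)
Your overall framework is correct and matches the paper's: exhibit a point of $\Phi$ isolated in its $\pi_2$-fiber, combine with the count $\dim \Phi \geq 26 = \dim \Hilb_{13}\bp^2$, and invoke upper semicontinuity of fiber dimension. The determinantal lower bound on $\dim \Phi$ is a legitimate alternative to the paper's direct parameter count (Lemma~\ref{lem:dimensionPhi}), provided you restrict to the open locus of reduced $T$ so that the relevant pushforward is actually a vector bundle (the paper itself notes in the discussion in Subsection~\ref{ssec:enumerating-singular-triads} that it fails to be locally free along the fat-point locus). So the scaffolding is sound.

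The genuine gap is that you never verify isolation, and your proposed configuration does not obviously make isolation tractable. You write ``one then verifies explicitly that at the particular point $\{x,y,z\}$\ldots the solution set is reduced and isolated'' and ``an alternative route is to compute the tangent space,'' but neither is carried out; the parenthetical remark that a codimension-$6$ condition on a $6$-dimensional space has finitely many solutions ``for generic enough starting data'' is exactly the kind of expected-dimension heuristic that the theorem is meant to replace with a proof. The entire technical content of the paper's argument is this verification, and it is nontrivial: the authors do not use a generic-looking configuration, but a deliberately degenerate one. They force six of the thirteen points $\Gamma_{13}$ onto a single line $M$, so that every pencil of quintics through $\Gamma_{13}$ must split off $M$; the residual pencil of quartics then has seven prescribed base points plus three nodes, giving $7 + 12 = 19 > 16$, which forces a curve $B$ into its base locus, and a finite combinatorial check (yielding the $630$ in Remark~\ref{remark:630}) pins down the possible triads. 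Proving that this degenerate configuration actually lies in $\Phi$ (i.e., is a flat limit of honest triad pairs) is itself the content of the blow-up analysis in Lemmas~\ref{lem:cohomologybasechange}--\ref{cor:closureofresidual}. Your $4\times 4$ grid avoids the flat-limit issue, but by the same token gives you no forced curve in the base locus of the quintic pencil and hence no combinatorial handle on the fiber; worse, the grid is a highly symmetric, non-generic configuration, so there is a real danger of extra families of triads. Until the isolation step is actually proved for some explicit configuration, the argument is incomplete.
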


The rest of the section is devoted to the proof of \autoref{theorem:DegenerateConfigurationInClosure}.  Before we proceed with the proof in Subsection \ref{sssec:degeneration-proof}, we set some notation and outline the idea of the proof in
\ref{sssec:degeneration-proof-idea}

\begin{definition}
	\label{definition:}
	Let $x_0, x_1, x_2$ denote three fixed non-collinear points in $\bp^2$ and set $l_{i,j} :=\overline{x_{i}x_{j}}$ forming the coordinate triangle. 

Let $X := \Bl_{\{x_{0},x_{1},x_{2}\}}\bp^{2}$, and let $E_{i}$ denote the exceptional divisor over $x_{i}$, $i = 1,2,3$. Set $L_{i,j}$ to be the proper transforms of the lines $l_{i,j} := \overline{x_i, x_j}$.   We let $H$ denote the hyperplane class on $\bp^{2}$ and its pullback on $X$.  By a {\sl line} in $X$, we mean an element of the linear system $|H|$ on $X$.
\end{definition}

\sssec{The idea of the Proof of \autoref{theorem:DegenerateConfigurationInClosure}.}
\label{sssec:degeneration-proof-idea}

In order to prove \autoref{theorem:DegenerateConfigurationInClosure}, we will
construct a particular set $[\Gamma_{13}] \in \Hilb_{13} \bp^2$ which
we will be able to see is isolated in its fiber under the map $\pi_2$.
The construction is as follows. 
Start by choosing a general line $M$ and a general point $p_{7}$
not on $M$. Then, choose points 
\begin{align*}
	p_1, p_2 &\in \ell_{0,1} \\
	p_3, p_4 &\in \ell_{0,2} \\
	p_5, p_6 &\in \ell_{1,2} \\
	p_8, p_9, p_{10} &\in M
\end{align*}
all general with respect to the above conditions.
We will then see that there is an element $(\left( x_0, y_0, z_0 \right), \Gamma_{13}) \in \Phi$ so that $p_1 \cup \cdots \cup p_{10} \subset \Gamma_{13}$,
and further that the remaining degree three scheme of $\Gamma_{13}$ is
supported on $M$. The hard part of the proof will be seeing that this
configuration lies in $\Phi$. This is done in \autoref{cor:closureofresidual}.
Once we know this configuration does lie in $\Phi$, it is not difficult
to see it is isolated. Since $\Gamma_{13}$ intersects $M$ with degree $6$,
every quintic containing $\Gamma_{13}$ must contain $M$. We are then looking
for a pencil of quartics with base locus containing $p_1, \ldots, p_6, p_7$
and having three additional singular nodes.
If the three singular nodes
do not lie on $M$, then this can only happen if the pencil of quartics
contains curves in its base locus. A case by case analysis shows
that if the three nodes are not collinear, the only possibility, up to permutation of the points, is that the base locus of this pencil of quartics is
$\ell_{0,1} \cup \ell_{0,2} \cup \ell_{1,2} \cup p_{7}$ and the moving part
of this pencil is the pencil of lines containing $p_{7}$. This
will be isolated in its fiber. 
Then, this means $\pi_2$ is dominant
because both varieties are irreducible and
$\dim \Phi = 26 = \dim \Hilb_{13}\bp^2$, as shown in \autoref{lem:dimensionPhi}. This concludes our sketch of the idea of the proof.

The proof of the following lemma is straightforward, and we omit its proof.
\begin{lemma}\label{lem:generalquinticpencil}
Let $\Gamma_{10} \subset X$ be ten general points. Then there is a unique pencil in the linear system $|5H - 2E_{1} - 2E_{2} - 2E_{3}|$ containing $\Gamma_{10}$  in its base locus.  Furthermore, the base locus of this pencil consists of the union of  $\Gamma_{10}$, and three residual points $\{a,b,c\} \subset S$ disjoint from $\Gamma_{10}$.
\end{lemma}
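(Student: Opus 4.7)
The plan is to run the standard three-step count: first determine the dimension of the ambient linear system $|5H - 2E_{1} - 2E_{2} - 2E_{3}|$ on $X$, then cut it down by the ten incidence conditions coming from $\Gamma_{10}$, then use intersection theory on $X$ to compute the total degree of the base locus of the resulting pencil, and finally subtract off $\Gamma_{10}$.

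First I would compute $\dim |5H - 2E_{1} - 2E_{2} - 2E_{3}|$. Pushing down to $\bp^{2}$, this is the linear system of plane quintics acquiring double points at the three noncollinear points $x_{0}, x_{1}, x_{2}$. Each double point is a codimension $3$ condition, and because the $x_{i}$ are noncollinear a direct computation (or the surjectivity of the restriction map $H^{0}(\bp^{2},\so(5)) \to \bigoplus_{i}\so_{\bp^{2}}/\fm_{x_{i}}^{2}$) shows these nine conditions are independent. Since $h^{0}(\bp^{2},\so(5)) = 21$, the system has projective dimension $11$. Next I would impose passage through $\Gamma_{10}$: for a general choice of ten points, these conditions are independent on an $11$-dimensional projective space (witnessed by the fact that $\dim |5H - 2E_{1} - 2E_{2} - 2E_{3}| = 11 \geq 10$, so one can always produce a member vanishing at any nine of the $p_{i}$ but missing the tenth). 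This produces a $\bp^{1}$-pencil $\scp \subset |5H - 2E_{1} - 2E_{2} - 2E_{3}|$ containing $\Gamma_{10}$ in its base locus.

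Now I would use intersection theory on $X$ to compute
\[
(5H - 2E_{1} - 2E_{2} - 2E_{3})^{2} = 25 - 4 - 4 - 4 = 13.
\]
Provided that $\scp$ has no curve fixed component, any two distinct members of $\scp$ meet in a zero-dimensional scheme of length $13$, so the base locus of $\scp$ has length exactly $13$. Since $\Gamma_{10}$ already accounts for ten reduced points in this base locus, the residual scheme has length three; and for a sufficiently general $\Gamma_{10}$ this residual will be three distinct reduced points $\{a,b,c\}$ disjoint from $\Gamma_{10}$, as asserted.

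The main obstacle is justifying the two genericity claims: (i) $\scp$ has no curve fixed component, and (ii) the residual length-$3$ scheme is reduced, consisting of three points off of $\Gamma_{10}$. Both are open conditions on $\Gamma_{10}$, so by semicontinuity it is enough to exhibit a single $\Gamma_{10}^{0}$ for which they hold. For (i), if $\scp$ had a fixed curve component $F$, then $F$ must contain $\Gamma_{10}$; a general set of ten points lies on no curve of degree $\leq 3$ (since $h^{0}(\so_{\bp^{2}}(3))=10$ and the ten conditions are independent for general points), so $F$ would have degree $\geq 4$, and an easy case analysis on the possible residual classes rules this out for a general configuration. For (ii), once (i) is established the residual is a length-$3$ subscheme of $X$ depending algebraically on $\Gamma_{10}$, and exhibiting any single example in which it is reduced and disjoint from $\Gamma_{10}$ propagates to the general case. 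In practice the specific ``triangle plus line'' configuration built in \autoref{sssec:degeneration-proof-idea} (or any toric/explicit test configuration) supplies such an example, concluding the proof.
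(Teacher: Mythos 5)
The paper omits its own proof of this lemma (it is labelled ``straightforward'' and skipped), so there is no argument in the source to compare against; the task is simply to check that your argument is correct. Your overall strategy is the right one: the dimension count ($h^0 = 12$, projective dimension $11$), the self-intersection $(5H - 2E_1 - 2E_2 - 2E_3)^2 = 13$, the reduction to the statement that ``no fixed curve component'' and ``residual is three reduced points off $\Gamma_{10}$'' are open conditions on $\Hilb_{10} X$, and the observation that a single witness then settles the general case.

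However, the explicit witness you propose is wrong, and this is a genuine gap. For the ``triangle plus line'' configuration of \autoref{sssec:degeneration-proof-idea}, the pencil in $|5H - 2E_1 - 2E_2 - 2E_3|$ through those ten points \emph{does} have a fixed curve: since $(5H - 2E_1 - 2E_2 - 2E_3)\cdot L_{i,j} = 1$ and each $L_{i,j}$ carries two of the $p_i$, every member contains the whole triangle; removing it leaves conics through $p_7, p_8, p_9, p_{10}$, and the collinearity of $p_8, p_9, p_{10}$ on $M$ forces $M$ as well. The moving part is then just the pencil of lines through $p_7$, and the base locus is one-dimensional --- precisely the degeneracy that the paper's later argument exploits, and precisely what the lemma excludes. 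So this configuration cannot serve as the example. The fix is easy and should replace that citation: take a \emph{general} pencil $\scp \subset |5H - 2E_1 - 2E_2 - 2E_3|$. The class is very ample on $X$ (it is $-K_X + (2H - E_1 - E_2 - E_3)$, a sum of ample classes on the degree-$6$ del Pezzo), so by Bertini a general member is smooth and a general pencil has $13$ distinct, transverse base points; take $\Gamma_{10}$ to be any ten of them and $\{a,b,c\}$ the other three. This simultaneously witnesses (i) and (ii), and the dimension count you already did gives uniqueness.

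A smaller point: your parenthetical justification that ten general points impose independent conditions --- ``witnessed by the fact that $\dim |\cdot| = 11 \geq 10$'' --- is not by itself a proof; having enough dimensions does not force independence. What you actually need (and have) is that the linear system is base-point-free, indeed very ample, so that each additional general point genuinely drops $h^0$ by one. Cite that instead.
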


\begin{lemma}\label{lem:dimensionPhi}
$\Phi$ is $26$-dimensional.
\end{lemma}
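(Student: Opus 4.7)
My plan is to compute $\dim \Phi$ by analyzing the first projection $\pi_1 \from \Phi \to \Hilb_3 \bp^2$ and showing that a general fiber has dimension $20$, so that $\dim \Phi = 6 + 20 = 26$. This approach is natural because, although the second projection $\pi_2$ is the one we ultimately want to be dominant, the first projection has obviously irreducible base, and the fibers have a clean linear-algebraic description.

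The fiber of $\pi_1$ over a general triple $\{x,y,z\}$ is (essentially) the family of pencils of quintics with nodes at $x, y,$ and $z$. Since $h^0(\bp^2, \sco(5)) = 21$ and imposing a node at a general point is three linear conditions, the vector space $H^0(\bp^2, \sco_{\bp^2}(5) \otimes \sci_{\{x,y,z\}}^2)$ has dimension $21 - 9 = 12$ for a general triple, and the Grassmannian of pencils inside it has dimension $2 \cdot (12 - 2) = 20$. For a general such pencil, B\'ezout gives a base locus of total degree $25$; since two curves both nodal at a common point have local intersection multiplicity at least $4$, and generically exactly $4$, the base locus is the union of $\{x,y,z\}$ (each contributing length $4$) with a residual scheme of length $25 - 12 = 13$. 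This residual scheme is exactly the $\Gamma_{13}$ associated to the pencil, and the map from pencils to pairs $(\{x,y,z\}, \Gamma_{13})$ is generically injective, since any pencil of quintics nodal at $\{x,y,z\}$ and passing through $\Gamma_{13}$ must be the full two-dimensional space $H^0(\bp^2, \sco(5)\otimes \sci_T^2 \sci_{\Gamma_{13}})$. Hence the general fiber of $\pi_1$ has dimension precisely $20$.

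The main technical point to verify is that for a general triple $\{x,y,z\}$ and a general pencil in the $20$-dimensional family above, the residual base scheme really is a reduced set of $13$ points disjoint from $\{x,y,z\}$. This is a generic statement: one only needs to exhibit one pencil for which the local intersection multiplicities at $x, y, z$ are exactly $4$ and the residual scheme is reduced of length $13$, and this can be done by writing down an explicit example or by a small deformation argument. Everything else is standard.

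Combining the irreducibility (we take the closure in the definition of $\Phi$) with $\dim \Hilb_3 \bp^2 = 6$ and general fiber dimension $20$ gives $\dim \Phi = 26$, as claimed. I expect no serious obstacle beyond the routine check that the generic pencil has the expected base locus; the dimension count itself is entirely formal once the base locus description is in hand.
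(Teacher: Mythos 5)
Your proposal is correct and follows the same overall strategy as the paper: fiber $\Phi$ over $\Hilb_3\bp^2$ via $\pi_1$, show the general fiber is $20$-dimensional, and add $6$. The one place you diverge is in how the $20$ is obtained: you compute it as $\dim G(2,12)$, where $12 = h^0(\bp^2,\sco(5)\otimes\sci_T^2) = 21 - 9$, directly from the linear algebra of nodal quintics; the paper instead invokes its companion Lemma~\ref{lem:generalquinticpencil}, which says that a general pencil of quintics nodal at $\{x,y,z\}$ is uniquely determined by $10$ general base points, yielding $2\cdot 10 = 20$. Your route is a bit more self-contained, since it does not rely on the prior lemma, but it then requires you to supply the generic-injectivity and base-locus analysis (pencil $\leftrightarrow \Gamma_{13}$ via B\'ezout) that Lemma~\ref{lem:generalquinticpencil} already packages. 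The routine verification you flag at the end --- that for a general triad and general pencil the residual base scheme is a reduced length-$13$ scheme disjoint from $T$ --- is exactly the content of that lemma, whose proof the paper omits as straightforward, so your proposal is on the same footing as the paper's on that point.
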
 
\begin{proof}
	First select three general points $\{x,y,z\}$ in $\bp^{2}$, giving $6$ dimensions. Using \autoref{lem:generalquinticpencil}, a general pencil of quintics singular at $\{x,y,z\}$ is determined by choosing ten general points to be in its base locus. The remaining three points of the base locus are determined
	by the initial choice of 10, by \autoref{lem:generalquinticpencil}. In total, we have that $\Phi$ is $26 = 6 + 2 \cdot 10$ dimensional.
\end{proof}

Let $\Gamma_{10}(t) = \{p_{1}(t), p_{2}(t), ... , p_{10}(t)\} \subset X \times \Delta$ be a family of ten points, parameterized by $\Delta := \Spec k[[t]]$, general among those with the following properties:

\begin{enumerate}
\item Over the generic point $\eta \in \Delta$, the points $p_{i}(\eta)$ are general in the sense of \autoref{lem:generalquinticpencil}. 

\item Over the special point $t=0$, the ten points $p_{i}(0)$ are situated as follows: 

\begin{enumerate}
\item $p_{1}(0), p_{2}(0)$ are general in $L_{0,1}$.
\item $p_{3}(0), p_{4}(0)$ are general in $L_{0,2}$.
\item $p_{5}(0), p_{6}(0)$ are general in $L_{1,2}$.
\item $p_{7}(0)$ is general in $X$.
\item $p_{8}(0), p_{9}(0), p_{10}(0)$ are general on a general line $M \subset X$.
\end{enumerate}
\end{enumerate}

By \autoref{lem:generalquinticpencil}, there are three residual points $\{a(\eta), b(\eta), c(\eta)\}$ defined by the ten points $\{p_{i}(\eta)\}_{i=1, ... , 10}$. We let $\{a(t), b(t), c(t)\}$ denote the closures of these points.  (Note: a base change may be required to say the three residual basepoints $\{p_{i}(\eta)\}_{i=1, ... , 10}$ are defined over $\Spec k((t))$. Performing such a base change does not affect the rest of the arguments.)

Now let $\scx$ be the threefold which is the blow up of $X \times \Delta$ at the union of the three curves $L_{i,j} \subset X \times \{0\}$, and let $\beta \from \scx \to X \times \Delta$ be the blow up map.  Let $f \from \scx \to \Delta$ denote the composition of $\beta$ with the projection onto the second factor of $X \times \Delta$.  $\scx_{\eta}$ and $\scx_{0}$ will denote the general and special fibers of $f$.  Note that $\scx_{\eta} = X_{\eta} := X \times \Spec k((t))$.

There are three exceptional divisors $F_{i,j}$ lying over the corresponding curves $L_{i,j} \subset X \times \{0\}$.  The map $f$ is a flat family of surfaces, with generic fiber $\scx_{\eta} = X_{\eta}$ and with special fiber $\scx_{0}$ a simple normal crossing union of four surfaces: 
the exceptional divisors $F_{i,j}$, and $X$.  Their incidence is as follows: The surfaces $F_{i,j}$ are pairwise disjoint and $F_{i,j} \cap X = L_{i,j}$.  

Each exceptional divisor $F_{i,j}$ is isomorphic to the Hirzebruch surface $\F_{1}$. This is because each rational curve $L_{i,j} \subset X_{0}$ has self-intersection $(-1)$, and therefore has normal bundle $N_{L_{i,j}/X \times \Delta} \cong \so(-1)\oplus \so.$

On $\F_{1}$, we let $S$ denote the divisor class of a {\sl codirectrix}, a section class having self-intersection $+1$.
	We denote by $R$ the ruling line class.  We let $S_{i,j}$ and $R_{i,j}$ denote the corresponding divisor classes on $F_{i,j}$.  

Let $\scl$ be the line bundle $\beta^{*}(\so_{X \times \Delta}(5H - 2E_{1}-2E_{2}-2E_{3}))$, and let $p'_{i}(t) \subset \scx$ denote the lifts of $p_{i}(t)$ to $\scx$. In other words, $\{p'_{i}(t) \}_{i=1, ... 10}$ are the closures of the points $\{p_{i}(\eta)\} \in X_{\eta} = \scx_{\eta}$ in $\scx$. 

By the generality assumptions on the $1$-parameter family of points $\{p_{i}(t)\}_{i=1, ... , 10}$ in $X \times \Delta$, we may assume the following about the central configuration of points $p'_{i}(0)$ in $\scx_{0}$:
\begin{enumerate}
\item The points $p'_{1}(0), p'_{2}(0)$ are general in $F_{0,1}$.
\item The points $p'_{3}(0), p'_{4}(0)$ are general in $F_{0,2}$.
\item The points $p'_{5}(0), p'_{6}(0)$ are general in $F_{1,2}$.
\item The points $m_{i,j} := M \cap L_{i,j}$ are general in $F_{i,j}$ with respect to the other two points mentioned in each part above. 
\end{enumerate}

\begin{figure}
\begin{equation}
	\nonumber
  \begin{tikzcd} 
    F_{i,j} \ar{d} \arrow[hookrightarrow]{r} & \scx := Bl_{L_{i,j} \times \left\{ 0 \right\}} \left(  X \times \Delta\right) \ar {d}{\beta}  & p'_i(t) \arrow[hookrightarrow]{l} \\
     L_{i,j} \times \left\{ 0 \right\} \arrow[hookrightarrow]{r} & X \times \Delta 
     & p_i(t) \arrow[hookrightarrow]{l} \arrow[-,double equal sign distance]{u} 
  \end{tikzcd}\end{equation}
\caption{A pictorial summary of relevant schemes}
\end{figure}

Set \[\scl ' := \scl(-F_{0,1} - F_{0,2} - F_{1,2}).\]
The following lemma is straightforward to prove.

\begin{lemma}\label{lem:restrictionsofLprime}
The line bundle $\scl '$ restricts to $\so_{F_{i,j}}(S_{i,j} + R_{i,j})$ on the exceptional divisors $F_{i,j} \subset \scx_{0}$ and restricts to $\so_{X}(2H)$ on $X \subset \scx_{0}$.
\end{lemma}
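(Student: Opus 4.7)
The plan is to compute both restrictions by pulling back and restricting divisor classes on $\scx$, leveraging two structural observations. First, the strict transform $\td X$ of $X\times\{0\}$ in $\scx$ maps isomorphically onto $X$ under $\beta$, since blowing up a codimension-one subscheme of a smooth surface changes nothing. Second, the three curves $L_{i,j}\subset X$ are pairwise disjoint: the three lines $\ell_{i,j}\subset\bp^{2}$ meet in pairs at the blown-up points $x_{k}$, but after blowing up their proper transforms meet each exceptional divisor $E_{k}$ at the distinct tangent directions of the two lines through $x_{k}$. Consequently the three exceptional divisors $F_{i,j}\subset\scx$ are themselves pairwise disjoint.

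For the restriction to $X$: using $\beta|_{\td X}\cong \on{id}_{X}$, one has $\scl|_{\td X} = 5H-2E_{1}-2E_{2}-2E_{3}$. The scheme $F_{i,j}\cap\td X$ is the section of the $\bp^{1}$-bundle $F_{i,j}\to L_{i,j}$ cut out by the sub-bundle $N_{L_{i,j}/X}\hookrightarrow N_{L_{i,j}/X\times\Delta}$, and it maps isomorphically to $L_{i,j}$ under $\beta$; as a divisor class on $X$ one thus has $F_{i,j}|_{\td X} = L_{i,j} = H-E_{i}-E_{j}$. Summing over the three exceptional divisors yields
\[
(F_{0,1}+F_{0,2}+F_{1,2})|_{X} \;=\; 3H-2(E_{1}+E_{2}+E_{3}),
\]
so that $\scl'|_{X} = \bigl(5H-2\sum_{k}E_{k}\bigr)-\bigl(3H-2\sum_{k}E_{k}\bigr) = 2H$, as claimed.

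For the restriction to $F_{i,j}$, identify $F_{i,j}\cong\bp(N_{L_{i,j}/X\times\Delta}^{\vee}) = \bp(\so\oplus\so(1))\cong\F_{1}$, and let $\pi\from F_{i,j}\to L_{i,j}\cong\bp^{1}$ denote the projection. Using the intersection numbers $H\cdot L_{i,j}=1$ and $E_{k}\cdot L_{i,j}=\delta_{k,i}+\delta_{k,j}$ computed on $X$, one obtains $\beta^{*}H|_{F_{i,j}} = R_{i,j}$, and $\beta^{*}E_{k}|_{F_{i,j}}$ equals $R_{i,j}$ when $k\in\{i,j\}$ and $0$ otherwise; hence $\scl|_{F_{i,j}} = 5R_{i,j}-4R_{i,j} = R_{i,j}$. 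By pairwise disjointness, $F_{i',j'}|_{F_{i,j}}=0$ for $\{i',j'\}\ne\{i,j\}$. Finally, the standard blow-up identity $\so_{\scx}(-F_{i,j})|_{F_{i,j}}\cong\so_{\bp(N^{\vee})}(1)$, combined with the identification of the tautological class on $\bp(\so\oplus\so(1))$ as the codirectrix $S_{i,j}$ (it restricts to $\so(1)$ on the $+1$-section cut out by the surjection $\so\oplus\so(1)\twoheadrightarrow\so(1)$, and to $\so$ on the $-1$-section), yields $-F_{i,j}|_{F_{i,j}}=S_{i,j}$. Assembling the pieces,
\[
\scl'|_{F_{i,j}} \;=\; R_{i,j} + S_{i,j} \;=\; S_{i,j}+R_{i,j}.
\]

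The main subtlety is the identification $-F_{i,j}|_{F_{i,j}}=S_{i,j}$, which requires carefully matching the projective-bundle convention for $\bp(N^{\vee})$ with the definition of the codirectrix as the $+1$-section. Reassuringly, the $+1$-section is precisely the one cut out by the $X$-direction $N_{L_{i,j}/X}^{\vee}\hookrightarrow N^{\vee}$, which is also the section $F_{i,j}\cap\td X$ appearing in the computation of $F_{i,j}|_{X}$; this provides a consistency check between the two parts of the argument.
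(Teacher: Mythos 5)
The paper gives no proof of this lemma (it is stated as ``straightforward to prove''), so there is no argument to compare against; your computation is the natural one and it is correct. In particular, you correctly identify the key step $-F_{i,j}|_{F_{i,j}} = S_{i,j}$: since the subbundle $N_{L_{i,j}/X}\cong\so(-1)\subset N_{L_{i,j}/X\times\Delta}\cong\so(-1)\oplus\so$ gives the section of normal bundle $\Hom(\so(-1),\so)\cong\so(1)$, i.e.\ the $(+1)$-section $S_{i,j}$, and this matches the standard identification $\so(-F)|_{F}\cong\so_{\bp(N^{\vee})}(1)$, with the consistency check against $F_{i,j}\cap\td{X}$ being a nice way to keep the conventions straight.
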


\begin{remark}
For the benefit of the reader, we give an alternate description of the linear system $|S + R|$ on $\F_{1}$ appearing in the above lemma.  If we view $\F_{1}$ as the blow up of $\bp^{2}$ at a point $q \in \bp^{2}$, then the linear system $|S+R|$ is the system of conics through the point $q$.  

In particular, if three more general points are chosen on $\F_{1}$, there will be a unique pencil of curves in $|S+R|$ containing them.
\end{remark}

Now consider the sheaf $\scf := \sci_{\{p'_{i}(t)\}_{i=1, ..., 10}} \otimes \scl '$ . 

\begin{lemma}\label{lem:cohomologybasechange}
The $k[[t]]$-module $H^{0}(\scx, \scf)$ is free of rank $2$.  Furthermore, the restriction map \[H^{0}(\scx, \scf) \to H^{0}(\scx_{0}, \scf|_{\scx_{0}})\] is surjective.
\end{lemma}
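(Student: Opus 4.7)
The plan is to apply cohomology and base change to the proper flat morphism $f \from \scx \to \Delta$. Since $\scf$ is flat over $\Delta$ (the sections $p'_i(t)$ are flat and $\scl'$ is a line bundle) and $\Delta$ is reduced, Grauert's theorem reduces the lemma to showing that the fiberwise function $t \mapsto h^0(\scx_t, \scf|_{\scx_t})$ is constant equal to $2$. Local freeness of $f_*\scf$ of rank $2$ gives $H^0(\scx, \scf) \cong k[[t]]^{\oplus 2}$ since $\Delta$ is local, and the base change isomorphism gives the surjectivity of the restriction to the special fiber.

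On the generic fiber $\scx_\eta \cong X_\eta$, the sheaf $\scf|_{\scx_\eta}$ is the ideal sheaf of the ten general points $\{p_i(\eta)\}$ twisted by $\sco_{X_\eta}(5H - 2E_1 - 2E_2 - 2E_3)$, so \autoref{lem:generalquinticpencil} gives $h^0 = 2$. For the special fiber, the decomposition $\scx_0 = X \cup F_{0,1} \cup F_{0,2} \cup F_{1,2}$ is a reduced simple normal crossings union with pairwise intersections $X \cap F_{i,j} = L_{i,j}$ (the $L_{i,j}$ are pairwise disjoint in $X = \Bl_3 \bp^2$) and empty higher intersections, yielding the Mayer--Vietoris short exact sequence
\[
0 \to \scf|_{\scx_0} \to \scf|_X \oplus \bigoplus_{i<j} \scf|_{F_{i,j}} \to \bigoplus_{i<j} \scf|_{L_{i,j}} \to 0.
\]
Using \autoref{lem:restrictionsofLprime}, one finds $h^0(X, \scf|_X) = 2$: since $p_8(0), p_9(0), p_{10}(0)$ are collinear on $M$, every conic vanishing on them must contain $M$, so $H^0(X, \scf|_X)$ is the pencil $\{M + M' : M' \ni p_7(0)\}$. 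On $F_{i,j} \cong \F_1$, the bundle $\scl'|_{F_{i,j}} \cong \sco(S_{i,j}+R_{i,j})$ has $h^0 = 5$, and two general vanishing conditions cut it to $h^0 = 3$; likewise $\scl'|_{L_{i,j}} \cong \sco_{\bp^1}(2)$ has $h^0 = 3$. The key observation is that the restriction $H^0(F_{i,j}, \scf|_{F_{i,j}}) \to H^0(L_{i,j}, \scl'|_{L_{i,j}})$ is an isomorphism: its kernel consists of sections of $\sco(S_{i,j} + R_{i,j} - L_{i,j}) \cong \sco(R_{i,j})$ vanishing at $p'_{2i-1}(0), p'_{2i}(0)$, which is zero because $\sco(R_{i,j})$ pulls back from $\sco_{\bp^1}(1)$ along the ruling and the two general points lie on distinct fibers. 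Consequently, for each $s \in H^0(X, \scf|_X)$ there is a unique triple $(t_{i,j})$ with matching restrictions on $L_{i,j}$, so the kernel of the Mayer--Vietoris map has dimension $h^0(X, \scf|_X) = 2$.

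The main obstacle is this Mayer--Vietoris computation on the reducible fiber --- in particular, verifying that the restriction maps from each exceptional component $F_{i,j}$ onto $L_{i,j}$ are isomorphisms, so that the gluing conditions cut the total $H^0$ down to exactly the $2$-dimensional image from $X$. Once fiberwise constancy of $h^0$ is in hand, Grauert's theorem on the reduced local base $\Delta = \Spec k[[t]]$ finishes the proof formally.
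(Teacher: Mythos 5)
Your proof is correct and follows the same strategy as the paper: reduce via Grauert's theorem to computing $h^0(\scx_0, \scf|_{\scx_0}) = 2$ on the reducible special fiber, then compute this by analyzing how sections glue across the four components, using the same two observations that the conic on $X$ through $p'_7(0),\dots,p'_{10}(0)$ must contain $M$ and that a section on each $F_{i,j}$ is uniquely determined by its restriction to $L_{i,j}$. Your Mayer--Vietoris packaging and the explicit kernel computation showing $H^0(F_{i,j}, \scf|_{F_{i,j}}) \to H^0(L_{i,j}, \scl'|_{L_{i,j}})$ is an isomorphism (rather than the paper's informal ``unique curve in $|S_{i,j}+R_{i,j}|$ through four points'') is a slightly cleaner rendering of the same step, but the underlying argument is identical.
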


\begin{proof}
$\scf$ is a torsion free sheaf, hence $H^{0}(\scx, \scf)$ is a torsion free $k[[t]]$-module, i.e. it is free. \autoref{lem:generalquinticpencil} tells us that the rank must be $2$.

By Grauert's theorem, it suffices to show \[h^{0}(\scx_{0}, \scf|_{\scx_{0}}) = 2.\]

A section $s$ of $\scf|_{\scx_{0}}$ is a section of $\scl ' |_{\scx_{0}}$ vanishing at the ten points $p'_{i}(0)$. We will now analyze what the zero locus of $s$ must be on each of the four components of $\scx_{0}$, beginning with $X$. 

The restriction $s|_{X}$ vanishes on a conic containing $p'_{7}(0), p'_{8}(0), p'_{9}(0),$ and $p'_{10}(0)$. Since the latter three points are collinear lying on the line $M$, such a conic is degenerate, of the form $M \cup N$, where $N$ is any line containing $p'_{7}(0)$. 

The restriction $s|_{F_{0,1}}$ vanishes on a divisor of class $|S_{0,1} + R_{0,1}|$ containing the pair of points $p'_{1}(0), p'_{2}(0)$.  Similar descriptions hold for the remaining two components. 

A section $s$ of $\scf |_{\scx_{0}}$ consists of sections on each component which agree on the intersection curves $L_{i,j}$. 
We claim that such a global section is determined, up to scaling, by its restriction to the component $X$.  Indeed, by choosing a conic of the form $M \cup N$, we determine two points $m_{i,j}, n_{i,j}$ on each line $L_{i,j}$, namely the intersections $M \cap L_{i,j}$, $N \cap L_{i,j}$.  

From the generality assumptions we have imposed, we get that there is a unique curve in the class $|S_{0,1} + R_{0,1}|$ containing the four points  $p'_{1}(0), p'_{2}(0), m_{0,1},$ and $n_{0,1}$.  Similarly for the other components $F_{i,j}$. 
It follows that any global section of $\scf$ is determined, up to scaling, by its restriction to $X$. But the restriction to $X$ is a degenerate conic of the form $M \cup N$ as described above, and therefore $h^{0}(\scx_{0}, \scf|_{\scx_{0}}) = 2$, as we claimed. 
\end{proof}

\begin{lemma}\label{lem:centralbaselocus}
The common zero locus of all sections of $\scf |_{\scx_{0}}$ is $M \cup \{p'_{1}(0) , ... , p'_{6}(0) , p'_{7}(0)\}$.
\end{lemma}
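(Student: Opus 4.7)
The plan is to compute the common zero locus of $H^{0}(\scx_{0}, \scf|_{\scx_{0}})$ component-by-component on the simple normal crossing surface $\scx_{0} = X \cup F_{0,1} \cup F_{0,2} \cup F_{1,2}$, using the explicit description of sections developed in the proof of \autoref{lem:cohomologybasechange}. Recall from that proof that every $s \in H^{0}(\scx_{0}, \scf|_{\scx_{0}})$ restricts on $X$ to a degenerate conic $M \cup N$ with $N$ a line through $p'_{7}(0)$, and on each $F_{i,j}$ to the unique curve in $|S_{i,j} + R_{i,j}|$ through the two interior points $p'_{a}(0), p'_{b}(0)$ lying on $F_{i,j}$ together with $m_{i,j} = M \cap L_{i,j}$ and $n_{i,j} = N \cap L_{i,j}$. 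Since $h^{0}(\scx_{0}, \scf|_{\scx_{0}}) = 2$, as $s$ varies the line $N$ sweeps out every line through $p'_{7}(0)$ exactly once, and consequently $n_{i,j}$ sweeps out all of $L_{i,j}$.

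First I would handle $X$. The restricted pencil is $\{M \cup N\}$, which has $M$ as a fixed component with residual the pencil of lines through $p'_{7}(0)$. The base scheme of this residual pencil is the reduced point $\{p'_{7}(0)\}$, so the base locus on $X$ is $M \cup \{p'_{7}(0)\}$.

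Next I would handle each $F_{i,j} \cong \F_{1}$. To show the base locus there is exactly the three points $\{p'_{a}(0), p'_{b}(0), m_{i,j}\}$, I would take two distinct members $C_{s}, C_{s'}$ of the restricted pencil in $|S_{i,j} + R_{i,j}|$; their scheme-theoretic intersection has length $(S_{i,j} + R_{i,j})^{2} = 3$. Both members contain the three points $p'_{a}(0), p'_{b}(0), m_{i,j}$, while their variable fourth intersections with $L_{i,j}$, namely $n_{i,j}(s) \neq n_{i,j}(s')$, are distinct. Hence $C_{s} \cap C_{s'} = \{p'_{a}(0), p'_{b}(0), m_{i,j}\}$ as schemes, which therefore coincides with the base locus on $F_{i,j}$.

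Finally, gluing the component-wise base loci along the double curves $L_{i,j}$ and observing that each $m_{i,j}$ already lies on $M \subset X$, I conclude that the total common zero locus is $M \cup \{p'_{1}(0), \ldots, p'_{7}(0)\}$, as claimed. The only point requiring care is verifying that the fourth point $n_{i,j}(s)$ genuinely moves on $L_{i,j}$ as $s$ varies; this reduces to noting that the linear projection from the pencil of lines through $p'_{7}(0)$ to $L_{i,j}$, given by $N \mapsto N \cap L_{i,j}$, is an isomorphism under the mild generality hypothesis $p'_{7}(0) \notin L_{i,j}$.
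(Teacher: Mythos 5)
Your proof is correct and takes the same approach as the paper, which simply refers back to the description of sections in the proof of \autoref{lem:cohomologybasechange} without carrying out the computation explicitly. Your component-by-component analysis (fixed component $M$ plus the pencil of lines through $p'_{7}(0)$ on $X$; the intersection-number computation $(S_{i,j}+R_{i,j})^{2}=3$ on each $F_{i,j}$, together with the observation that $n_{i,j}$ moves while $p'_{a}(0),p'_{b}(0),m_{i,j}$ stay fixed) is exactly the bookkeeping the paper leaves to the reader.
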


\begin{proof}
This follows from the description of the zero loci of sections of  $\scf |_{\scx_{0}}$ found in the proof of \autoref{lem:cohomologybasechange}.
\end{proof} 

Let $\langle f_{1}, f_{2} \rangle$ be a $k [[t]]$-basis for $H^{0}(\scx, \scf)$.
Note that, $\langle f_{1}, f_{2} \rangle$ restricts to a basis $H^{0}(\scx_{0}, \scf|_{\scx_{0}})$ by \autoref{lem:cohomologybasechange}.

\begin{lemma}\label{lem:familybaseloci}
Maintain the notation above, and let $\scy \subset \scx$ defined by $f_{1} = f_{2} = 0$ be the common zero scheme.  Then, as schemes, $\scy \cap \scx_{0} =  M \cup \{p'_{1}(0) , ... , p'_{6}(0) , p'_{7}(0)\}$ and $\scy \cap \scx_{\eta} = \{p_{1}(\eta), ... , p_{10}(\eta), a(\eta), b(\eta), c(\eta)\}$.
\end{lemma}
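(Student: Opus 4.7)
The plan is to compute $\scy \cap \scx_\eta$ and $\scy \cap \scx_0$ as the zero schemes of the restricted pencils $\langle f_1, f_2 \rangle|_{\scx_\eta}$ and $\langle f_1, f_2 \rangle|_{\scx_0}$, and identify each of these using the three preceding lemmas.

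For the generic fiber, since the exceptional divisors $F_{i,j}$ do not meet $\scx_\eta$, we have $\scl'|_{\scx_\eta} \cong \scl|_{\scx_\eta}$. Consequently, by \autoref{lem:cohomologybasechange}, the restricted pencil is precisely the unique pencil of quintics in $|5H - 2E_{1} - 2E_{2} - 2E_{3}|$ passing through the ten general points $\{p_{i}(\eta)\}$. I would then quote \autoref{lem:generalquinticpencil} directly to identify its base locus with the $13$-point set $\{p_{1}(\eta), \ldots, p_{10}(\eta), a(\eta), b(\eta), c(\eta)\}$; reducedness of this zero scheme is immediate from the generality of the $p_{i}(\eta)$.

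For the special fiber, \autoref{lem:cohomologybasechange} again gives that $f_{1}|_{\scx_0}, f_{2}|_{\scx_0}$ span $H^{0}(\scx_0, \scf|_{\scx_0})$, while \autoref{lem:centralbaselocus} describes the set-theoretic base locus as $M \cup \{p'_{1}(0), \ldots, p'_{7}(0)\}$. To upgrade this to a scheme-theoretic equality, I would work componentwise using the explicit description developed in the proof of \autoref{lem:cohomologybasechange}. On $X$, every section restricts to a degenerate conic $M \cup N$ with $N$ a line through $p'_{7}(0)$, so $M$ appears with multiplicity one in the scheme-theoretic base locus on $X$ and the moving pencil of lines $N$ contributes the reduced point $p'_{7}(0)$. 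On each $F_{i,j}$, the restricted pencil is a pencil in $|S_{i,j} + R_{i,j}|$ varying only in the fourth basepoint $n_{i,j} = N \cap L_{i,j}$; so its scheme-theoretic base locus on $F_{i,j}$ is the reduced three-point set $\{p'_{i}(0), p'_{j}(0), m_{i,j}\}$.

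The main obstacle is to verify that gluing these componentwise base loci along the incidence curves $L_{i,j}$ does not introduce embedded points or nonreduced structure at the contact points $m_{i,j} \in M \cap L_{i,j}$. This should follow from the generality of the configuration, which guarantees that each restricted pencil separates tangent directions at its zero-dimensional base points, so the total scheme-theoretic base locus on $\scx_0$ is the reduced union $M \cup \{p'_{1}(0), \ldots, p'_{7}(0)\}$ as claimed.
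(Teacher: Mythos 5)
Your proposal is correct and follows essentially the same approach as the paper: for the generic fiber you invoke \autoref{lem:generalquinticpencil} together with generality, and for the special fiber you combine \autoref{lem:cohomologybasechange} (to see that $f_1|_{\scx_0}, f_2|_{\scx_0}$ span $H^0(\scx_0, \scf|_{\scx_0})$) with \autoref{lem:centralbaselocus}. The extra componentwise analysis you supply for the special fiber is really a re-derivation of \autoref{lem:centralbaselocus}, which the paper states as a scheme-theoretic equality and simply cites, so your caution about upgrading from set-theoretic to scheme-theoretic is already covered by that lemma.
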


\begin{proof}
The generality assumptions on the original family of points $p_{i}(t)$ and \autoref{lem:generalquinticpencil} ensure the statement regarding $\scy \cap \scx_{\eta}$. Then, $\scy \cap \scx_{0} =  M \cup \{p'_{1}(0) , ... , p'_{6}(0) , p'_{7}(0)\}$ follows from \autoref{lem:centralbaselocus}.
\end{proof}

Now let $\{a'(t), b'(t), c'(t)\}$ denote the closures of $\{a(\eta), b(\eta), c(\eta)\}$ in $\scx$.

\begin{corollary}\label{cor:closureofresidual}
The scheme $\{p'_{8}(t), p'_{9}(t), p'_{10}(t), a'(t), b'(t), c'(t)\} \cap \scx_{0}$ is contained in the line $M \subset X \subset \scx_{0}$. 
\end{corollary}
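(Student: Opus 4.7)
The plan is to apply \autoref{lem:familybaseloci}, which records the scheme-theoretic equality $\scy \cap \scx_0 = M \cup \{p'_1(0), \ldots, p'_7(0)\}$ with each of the seven listed points appearing as a reduced isolated point. Since each of the closures $\{p'_i(t)\}_{i=1}^{10}$ and $\{a'(t), b'(t), c'(t)\}$ is a section of $\Delta$ contained in $\scy$, the first step is to observe that all thirteen specializations at $t=0$ necessarily lie in $\scy \cap \scx_0$, and hence in $M \cup \{p'_1(0), \ldots, p'_7(0)\}$.

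The key step will be to rule out any coincidence between $\{a'(0), b'(0), c'(0)\}$ and the seven isolated points. Suppose, for contradiction, that $a'(0) = p'_i(0)$ for some $i \in \{1, \ldots, 7\}$. Then $\scy$ would contain two distinct sections of $\Delta$, namely the closures of $a(t)$ and of $p_i(t)$, both passing through the common central point. Each section is flat of degree one over $\Delta$, so each contributes multiplicity one to $\scy \cap \scx_0$ at that point, forcing the length at $p'_i(0)$ to be at least $2$ and contradicting the reducedness asserted in \autoref{lem:familybaseloci}. The identical argument applies to $b'(0)$ and $c'(0)$, so each of these three specializations must lie on $M$.

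The remaining three specializations $p'_8(0), p'_9(0), p'_{10}(0)$ already lie on $M$ by the initial setup: the points $p_8(0), p_9(0), p_{10}(0)$ were chosen generally on $M \subset X$, and since $M$ meets the blown-up curves $L_{i,j}$ only at the isolated points $m_{i,j} = M \cap L_{i,j}$, their lifts to $\scx$ remain on the proper transform of $M$ sitting inside the component $X \subset \scx_0$. Combining these two observations establishes the desired inclusion.

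The only conceivable obstacle here is the scheme-theoretic reducedness of the isolated points in \autoref{lem:familybaseloci}, but this is visible from the explicit description of $f_1|_X$ and $f_2|_X$ as distinct degenerate conics $M \cup N_1$ and $M \cup N_2$, with $N_1, N_2$ transverse lines through $p'_7(0)$, together with the analogous transverse pencil analysis on each exceptional divisor $F_{i,j}$. So the corollary should drop out of the preceding lemma with essentially no further work.
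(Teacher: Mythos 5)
Your argument is correct and follows the same route as the paper: cite \autoref{lem:familybaseloci} to pin the limits inside $\scy \cap \scx_0 = M \cup \{p'_1(0),\ldots,p'_7(0)\}$, then use the multiplicity-one (reducedness) of the seven isolated points to exclude $a'(0), b'(0), c'(0)$ from landing there. You simply make explicit what the paper's terse proof leaves implicit --- the flatness-and-degree bookkeeping showing that two distinct sections limiting to a common point would force length $\geq 2$ there, and the separate observation that $p'_8(0), p'_9(0), p'_{10}(0)$ lie on $M$ by construction.
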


\begin{proof}
This follows from \autoref{lem:familybaseloci}.  Indeed, \[\{p'_{8}(t), p'_{9}(t), p'_{10}(t), a'(t), b'(t), c'(t)\} \cap \scx_{0}\] must be a subscheme of $\scy \cap \scx_{0} =  M \cup \{p'_{1}(0) , ... , p'_{6}(0) , p'_{7}(0)\}$.  The sections $\{a'(t), b'(t), c'(t)\}$ cannot limit to any of the seven isolated points $\{p'_{1}, ... , p'_{7}\}$, since these seven points occur with multiplicity one in the scheme $\scy \cap \scx_{0}$. Therefore, the points $\{a'(0), b'(0), c'(0)\}$  must limit to $M$. 
\end{proof}

\sssec{Proof of \autoref{theorem:DegenerateConfigurationInClosure}.}
\label{sssec:degeneration-proof}

\begin{proof}	
	A one parameter family of thirteen points \[\{p_{1}(t), ... , p_{10}(t), a(t), b(t), c(t)\}\] discussed above limits, at $t=0$, to a configuration which we call $\Gamma_{13} \subset \bp^{2}$.  (Technically, $\Gamma_{13}$ is a set in $X$, but we view it as a set in $\bp^{2}$, since $\Gamma_{13}$ avoids the exceptional divisors in $X$.)

Now we argue that the pair $(\{x_{0},y_{0},z_{0}\}, \Gamma_{13})$ is isolated in its fiber under the projection $\pi_{2} \from \Phi \to \Hilb_{13}\bp^2$.

It suffices to show that there are only finitely many noncollinear triads $T \subset \bp^{2}$ disjoint from $\Gamma_{13}$ for which there is a pencil of quintics $C_{t}$ all singular at $T$ and containing $\Gamma_{13}$.

Any pencil of quintics containing $\Gamma_{13}$ must contain the line $M$ in its base locus, since $6$ of the points of $\Gamma_{13}$, $\{p_{8}(0), p_{9}(0), p_{10}(0), a(0), b(0), c(0)\}$ lie on this line (\autoref{cor:closureofresidual}). Therefore, the residual quartic curves of the pencil, denoted $C'_{t}$, form a pencil of curves singular at $T$, and containing $\{p_{1}(0), ... , p_{6}(0), p_{7}(0)\}$ in its base locus.  Note that the set $\{p_{1}(0), ... , p_{6}(0), p_{7}(0)\}$ is a general set of seven points in the plane.

By degree considerations, a pencil of quartics $C'_{t}$ singular at $T$ and having $7$ remaining points in its base locus is forced to have an entire curve $B$ in its base locus.   The curve $B$ must have degree $1, 2,$ or $3$.  

A straightforward combinatorial check shows that if the three points of $T$ are not collinear, the curve $B$ must be the union of three lines joined by three pairs of points among the set $\{p_{1}(0) ,... ,p_{6}(0), p_{7}(0)\}$, and the triad $T$ is the vertices of the triangle $B$.  All told, there are only finitely many possibilities for $T$, which in turn implies that $(\{x_{0},y_{0},z_{0}\}, \Gamma_{13})$ is isolated in its fiber under projection $\pi_{2} \from \Phi \to \Hilb_{13}\bp^2$. 
\end{proof}

\begin{remark}\label{remark:630}
The method of proof for \autoref{theorem:DegenerateConfigurationInClosure} actually shows that there are at least $630$  $3$-Veronese surfaces through thirteen general points in $\bp^{9}$. The reason for this is that we made several
choices in constructing an isolated point of the incidence correspondence.
We choose one of the points $p_1, \ldots, p_6, p_7$ to not lie
in the triangle containing the nodal base locus, and we then chose a division
of the remaining six points into three pairs of two points.
In total there are $7 \cdot \frac{6!}{2! \cdot 2! \cdot 2!} = 630$
such choices, and hence at least $630$ isolated points.
Then it follows that there are at least $630$ $3$-Veronese surfaces
through a general set of $13$ points by
\cite[II.6.3, Theorem 3]{shafarevich:basic-algebraic-geometry-1}.
\end{remark}

\subsection{The remaining del Pezzo surfaces and tricanonical genus 3 curves} 
\label{ssec:degrees-7-and-8}
\subsubsection{Degree $8$.} 
Weak interpolation for degree $8$, type $1$ del Pezzo surfaces asks
 whether such surfaces pass through $12$ general points $\Gamma_{12} \subset \bp^{8}$,
by \autoref{table:del-Pezzo-conditions}. In fact, weak interpolation
for degree $8$, type $1$ del Pezzo surfaces follows almost immediately from our knowledge of interpolation for degree $9$ del Pezzos. 

\begin{corollary}
  \label{corollary:degree-8-type-1-interpolation}
Degree $8$ del Pezzos isomorphic to the Hirzebruch surface $\F_{1}$ satisfy weak interpolation.
\end{corollary}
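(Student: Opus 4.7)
The plan is to deduce this from \autoref{thm:3veroneseexistence} by projecting a $3$-Veronese surface from a point lying on it. First I would pick $13$ general points $q_1, \ldots, q_{13} \in \bp^9$ and apply \autoref{thm:3veroneseexistence} to obtain a $3$-Veronese surface $V_3 \subset \bp^9$ containing all of them.

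Next, I would fix a hyperplane $H \cong \bp^8$ of $\bp^9$ not passing through $q_{13}$ and consider the projection $\pi \from \bp^9 \setminus \{q_{13}\} \to H$ from $q_{13}$. Since $q_{13}$ is a smooth point of $V_3 \cong \bp^2$, the projection resolves to a morphism on $\Bl_{q_{13}} V_3 \cong \mathbb F_1$ given by the linear system $|3L - E|$, where $L$ is the pullback of the hyperplane class on $\bp^2$ and $E$ is the exceptional divisor. Since $|3L - E| = -K_{\mathbb F_1}$ is very ample, the image $S := \overline{\pi(V_3)} \subset H \cong \bp^8$ is the anticanonical embedding of $\mathbb F_1$, i.e., a smooth degree $8$, type $1$ del Pezzo surface. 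By construction, $S$ contains the twelve projected points $p_i := \pi(q_i)$ for $i = 1, \ldots, 12$.

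It remains to argue that the tuples $(p_1, \ldots, p_{12})$ produced this way range over a dense open subset of $(\bp^8)^{12}$. For a fixed reference hyperplane $H_0 \cong \bp^8$, the rational map
\[
\Psi \from (\bp^9)^{13} \dashrightarrow (\bp^8)^{12}, \qquad (q_1, \ldots, q_{13}) \mapsto (\pi_{q_{13}}(q_1), \ldots, \pi_{q_{13}}(q_{12})),
\]
defined where $q_{13} \notin H_0$, is dominant. Indeed, $\dim (\bp^9)^{13} - \dim (\bp^8)^{12} = 117 - 96 = 21$, matching the dimension of the generic fiber: $9$ parameters for $q_{13}$ plus $1$ parameter for each of $q_1, \ldots, q_{12}$ along its line through $q_{13}$ and its image. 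Hence for any $12$ general points in $\bp^8$ we may find $(q_1, \ldots, q_{13})$ general in $(\bp^9)^{13}$ with $\pi_{q_{13}}(q_i) = p_i$, so the preceding construction produces a degree $8$, type $1$ del Pezzo surface through the $p_i$. By \autoref{table:del-Pezzo-conditions}, this establishes weak interpolation.

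The main check is that $\pi$ restricts to a closed embedding of $\Bl_{q_{13}} V_3$ (rather than a merely birational map); this is ensured by the very ampleness of $-K_{\mathbb F_1}$ on $\mathbb F_1$, which guarantees that the linear system $|3L - E|$ separates points and tangent vectors.
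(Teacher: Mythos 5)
Your proof is correct, and it takes a genuinely different route from the paper. The paper deduces the degree $8$, type $1$ case from the degree $9$ machinery entirely on the association side: it starts from $A(\Gamma_{12}) \subset \bp^2$, appends an auxiliary thirteenth point $p \in \bp^2$, applies \autoref{prop:6h-3x-3y-3z} to the enlarged set $B_{13}$, and then observes that the subsystem of sextics (triple at the singular triad) passing through $p$ induces association for $A(\Gamma_{12})$ and maps $\bp^2$ birationally to an $\F_1$ del Pezzo in $\bp^8$. You instead work entirely on the $\bp^9$ side: invoke \autoref{thm:3veroneseexistence} to get a $3$-Veronese $V_3$ through $13$ general points, then project from the thirteenth point. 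Since $|3L - E| = -K_{\F_1}$ is very ample, $\Bl_{q_{13}} V_3$ embeds as a smooth degree $8$ del Pezzo of type $1$ in $\bp^8$, and the dominance of the map $(q_1,\dots,q_{13}) \mapsto (\pi_{q_{13}}(q_1),\dots,\pi_{q_{13}}(q_{12}))$ (verified by your $117 - 96 = 21$ fiber-dimension count) ensures the twelve projected points can be taken general. Both proofs funnel through the degree $9$ case, but your approach is more classically geometric and avoids any further appeal to the Gale transform; the paper's is arguably ``dual'' to yours via association (appending a point in the source $\bp^2$ versus projecting from a point in the target $\bp^9$) and fits more naturally into the framework the paper has already set up, which is presumably why the authors phrased it that way. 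The analogous projection argument would also give the degree $7$ case by projecting twice, matching the paper's proof of \autoref{corollary:degree-7-interpolation} with two auxiliary points. One small point worth making explicit in your write-up: you need $\Psi$ restricted to the dense open locus $U \subset (\bp^9)^{13}$ where $3$-Veronese interpolation actually holds to remain dominant, but this is automatic since the restriction of a dominant map to a dense open subset of its source is still dominant (by Chevalley, its image contains a dense open subset of the target).
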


\begin{proof}

Indeed, Let $A(\Gamma_{12}) \subset \bp^{2}$ be the associated set. Now append a general thirteenth point $p \in \bp^{2}$ and let $B_{13} \subset \bp^{2}$ be the union. 

As follows from \autoref{prop:6h-3x-3y-3z}, association for $B_{13}$ is induced by the linear system of sextics having triple points at a singular triad for $B_{13}$.  Now we take the subsystem of such sextics with further basepoint at the chosen point $p$.  The resulting subsystem induces association for $A(\Gamma_{12})$, and maps $\bp^{2}$ birationally to a degree $8$ del Pezzo containing $\Gamma_{12}$, abstractly isomorphic to the Hirzebruch surface $\F_{1}$.
\end{proof} 

\begin{remark}
The parameter count suggests that there will be a two dimensional family of del Pezzo $8$'s through a general $\Gamma_{12}$. The argument above also has two dimensions of freedom in the choice of auxiliary point $p$.
\end{remark}

\subsubsection{Degree $7$ del Pezzo surfaces} 

\begin{corollary}
  \label{corollary:degree-7-interpolation}
Degree $7$ del Pezzo surfaces satisfy weak interpolation.
\end{corollary}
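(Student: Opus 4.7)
The plan is to mimic the proof of \autoref{corollary:degree-8-type-1-interpolation}, but appending two auxiliary points instead of one. By \autoref{table:del-Pezzo-conditions}, it suffices to produce a smooth degree $7$ del Pezzo surface through a general set $\Gamma_{11}\subset\bp^7$. Passing to the associated set gives $A(\Gamma_{11})\subset\bp^2$, a general set of $11$ points (here $11 = 2 + 7 + 2$). Choose two general additional points $p_1,p_2\in\bp^2$ and let $B_{13}:=A(\Gamma_{11})\cup\{p_1,p_2\}$, a general set of thirteen points in $\bp^2$.

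By \autoref{theorem:DegenerateConfigurationInClosure}, there exists a singular triad $T\subset\bp^2$ for $B_{13}$. By \autoref{prop:6h-3x-3y-3z}, the ten-dimensional vector space $W := H^0(\bp^2,\sci_T^3(6))$ of sextics with triple points at $T$ induces association for $B_{13}$. Now consider the subsystem
\[
W' := \{\, f\in W : f(p_1)=f(p_2)=0 \,\} \subset W.
\]
Genericity of $p_1,p_2$ ensures that these are two independent linear conditions on $W$, so $\dim W' = 8$, and the induced rational map $\phi_{W'}\from\bp^2\dashrightarrow\bp^7$ contracts $p_1,p_2$ while being birational onto its image.

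Next I would verify that $W'$ induces association for $A(\Gamma_{11})$. This follows formally from the trace-pairing characterization of association: a section $g\in W$ vanishing at $p_1,p_2$ pairs identically with any linear form $f$ on $\bp^2$ summed over $B_{13}$ or over $A(\Gamma_{11})$, since the missing two terms are zero. Thus the restriction $W'|_{A(\Gamma_{11})}$ lies in the orthogonal complement of the space of linear forms on $\bp^2$ restricted to $A(\Gamma_{11})$, and a dimension count (both spaces are $8$-dimensional, since $11 = 2+7+2$) shows equality. Consequently $\phi_{W'}(A(\Gamma_{11}))$ is projectively equivalent to $\Gamma_{11}$.

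Finally, the image $\phi_{W'}(\bp^2)\subset\bp^7$ is, after the Cremona transformation centered at the pencil-of-quartics basepoints as in \autoref{section:howsingtriadsArise}, the image of $\bp^2$ under the linear system of cubics with two base points, i.e.\ the anticanonical embedding of the blowup of $\bp^2$ at two general points. This is precisely a smooth degree $7$ del Pezzo surface, and it contains $\Gamma_{11}$ up to projective equivalence. The main technical point to verify carefully is the last paragraph: that, under the genericity hypotheses on $\Gamma_{11}$ and the auxiliary points $p_1,p_2$, the image of $\phi_{W'}$ is actually smooth and of the expected degree seven (i.e.\ that no unexpected base locus or contraction occurs). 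Both facts follow from standard smoothness and dimension arguments paralleling the degree $8$, type $1$ case.
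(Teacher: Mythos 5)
Your proposal follows the paper's own route exactly: pass to the associated set $A(\Gamma_{11})\subset\bp^2$, append two general auxiliary points to get $B_{13}$, take a singular triad for $B_{13}$ via \autoref{theorem:DegenerateConfigurationInClosure}, use \autoref{prop:6h-3x-3y-3z} to induce association by sextics triple at the triad, and cut down to the subsystem with base points at the two auxiliary points. The paper's proof is a one-line ``proceed analogously to the degree $8$ case,'' so your write-up is essentially a (welcome) expansion of what the authors left implicit, and the trace-pairing argument you sketch for why dropping the two points from the trace still yields the correct associated series is the right justification.

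One small imprecision: you say $\phi_{W'}$ ``contracts $p_1,p_2$.'' In fact $p_1,p_2$ are \emph{base points} of $W'$, so $\phi_{W'}$ is undefined there; after resolving, the exceptional divisors over $p_1,p_2$ map isomorphically to $(-1)$-curves on the target del Pezzo rather than being contracted. The picture is that $\phi_W$ realizes (via the Cremona of \autoref{section:howsingtriadsArise}) the $3$-Veronese of a target $\bp^2$, and passing to $W'$ is projection from the images of $p_1,p_2$, i.e.\ imposing two further base points on the cubic system, yielding the anticanonical model of $\Bl_{2\,\mathrm{pts}}\bp^2$. This doesn't affect the validity of the argument, but the word ``contracts'' should be corrected.
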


\begin{proof}
The parameter count says that weak interpolation for such surfaces is equivalent to asking them to pass through $11$ general points $\Gamma_{11} \subset \bp^{7}$.  We now proceed analogously to the previous case: We now append two general auxiliary points $p,q \in \bp^{2}$ to the associated set $A(\Gamma_{11}) \subset \bp^{2}$. 
\end{proof}

\begin{remark}
Paralleling the degree $8$ case, the dimension of del Pezzo 7's through eleven general points is four dimensional, as is the dimension of the space of auxiliary pairs $p,q \in \bp^{2}$. 
\end{remark}

\begin{remark}
The reason why this method fails for degree $6$ del Pezzos is that the number of points required by weak interpolation is not $10$, as the current pattern would suggest.  Rather, the required number of points is eleven, and therefore we needed a separate argument. 
\end{remark}

\sssec{Genus 3 tricanonical curves}

As a bonus, we show that the closed locus of the Hilbert scheme of
degree $12$ genus $3$ curves in $\bp^9$ which are tricanonically
embedded satisfy interpolation.

\begin{corollary}
	\label{corollary:tricanonical}
	The closed locus of the Hilbert scheme of degree $12$ genus $3$
	curves in $\bp^9$ which are tricanonically embedded satisfy interpolation.
\end{corollary}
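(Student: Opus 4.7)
My plan is to reduce the corollary to \autoref{thm:3veroneseexistence}, using the fact that every smooth tricanonical genus $3$ curve in $\bp^{9}$ arises as the $3$-Veronese image of a smooth plane quartic. Indeed, the canonical embedding realizes a smooth non-hyperelliptic genus $3$ curve as a plane quartic $C \subset \bp^{2}$, and the tricanonical map $C \to \bp^{9}$ factors as $C \hookrightarrow \bp^{2} \xrightarrow{\nu_{3}} \bp^{9}$, since the restriction of $|3H|$ on $\bp^{2}$ to $C$ is $|3K_{C}|$, of degree $12$. Thus every smooth tricanonical genus $3$ curve in $\bp^9$ lies on a $3$-Veronese surface $V_{3} \subset \bp^{9}$ and is cut out there by a plane quartic curve.

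First I would carry out the dimension count. The locus $U$ of tricanonical genus $3$ curves has dimension $6 + 99 = 105 = 13 \cdot 8 + 1$, coming from the $6$-dimensional moduli of genus $3$ curves and the $99$-dimensional $\PGL_{10}$-orbit of a fixed embedding. Hence interpolation amounts to showing that for general points $\Gamma_{13} = \{p_{1}, \ldots, p_{13}\} \subset \bp^{9}$ and a general $\Lambda \cong \bp^{7} \subset \bp^{9}$, there exists a tricanonical genus $3$ curve through $\Gamma_{13}$ meeting $\Lambda$.

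Given such a general pair $(\Gamma_{13}, \Lambda)$, the construction is as follows. Apply \autoref{thm:3veroneseexistence} to obtain a $3$-Veronese surface $V_{3} \subset \bp^{9}$ containing $\Gamma_{13}$. Since $V_{3}$ has degree $9$ in $\bp^{9}$ and $\Lambda$ has codimension $2$, the intersection $V_{3} \cap \Lambda$ consists of $9$ points, generic on $V_{3}$ for a generic $\Lambda$. Fix any $r \in V_{3} \cap \Lambda$, and pull $\Gamma_{13}$ and $r$ back to $\bp^{2}$ via $\nu_{3}^{-1}$, obtaining $14$ points in general position on $\bp^{2}$. Since $\dim |\sco_{\bp^{2}}(4)| = 14$, there is a unique plane quartic $Q$ through these $14$ points, and its $3$-Veronese image $\nu_{3}(Q) \subset \bp^{9}$ is a tricanonical genus $3$ curve passing through $\Gamma_{13}$ and meeting $\Lambda$ at $\nu_{3}(r)$.

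The main obstacle is checking that $Q$ can be chosen smooth, so that $\nu_{3}(Q)$ actually lies in the smooth locus of $U$ rather than a boundary stratum. The pencil of plane quartics through $\Gamma_{13}$ has smooth general member, and imposing the additional condition of containing a generic point $r$ singles out one member of this pencil; for generic $\Lambda$ the resulting quartic is smooth (and if necessary we have $9$ choices of $r$ to avoid the finitely many singular quartics in the pencil). A secondary subtlety is the claim that the $14$ constructed points are in general position on $\bp^{2}$, which reduces to verifying that general points of $\bp^{9}$ pull back to general points on a Veronese surface containing them; this follows from a parameter count together with \autoref{remark:630}, which produces at least $630$ distinct Veroneses through a general $\Gamma_{13}$ and hence ensures that at least one gives a generic configuration on $\bp^{2}$.
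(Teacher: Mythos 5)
Your proof is substantially correct and rests on the same core idea as the paper: reduce to the existence of a $3$-Veronese surface through $13$ general points (\autoref{thm:3veroneseexistence}) and then produce a quartic curve on that surface. The difference is which equivalent formulation of interpolation you invoke. You use condition~\ref{interpolation-naive} and explicitly build a curve through the $13$ points that also meets the general $\bp^7$, while the paper uses condition~\ref{interpolation-sweep} and simply notes that the pencil of quartics on $V_3$ through (the preimages of) the $13$ points sweeps out the surface $V_3$, a $2$-dimensional variety, as required. The paper's route is shorter because it never introduces $\Lambda$.

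Two of your concerns are unnecessary and, in one case, unconvincingly handled. First, the smoothness of $Q$ is moot: the corollary concerns the \emph{closed} locus $U$ of tricanonical curves, and for any plane quartic $Q$, smooth or not, $\nu_3(Q)$ is a flat limit of images of smooth quartics and hence lies in $U$. Second, you do not need the $14$ pullback points to be in general position: since $h^0(\bp^2,\sco(4)) = 15 > 14$, a quartic through the $14$ points always exists, general position is only needed for uniqueness, which you do not use. That said, the justification you offer via \autoref{remark:630} is not sound as stated: knowing there are at least $630$ Veronese surfaces through $\Gamma_{13}$ does not, by itself, imply any of them pulls $\Gamma_{13}$ back to a generic configuration on $\bp^2$. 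The clean argument, if one wanted it, is that the incidence variety $\{(V,p_1,\dots,p_{13}): p_i \in V\}$ is irreducible (being a $(\bp^2)^{13}$-bundle over the irreducible space of Veronese surfaces) and dominates $(\bp^9)^{13}$ by \autoref{thm:3veroneseexistence}, so a general point of the target lifts to a general point of the source. But none of this is needed once you drop the uniqueness and smoothness requirements.
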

\begin{proof}
	First, note that there is a $105 = 99 + 6$ dimensional space of
	tricanonically embedded curve, where $99 = \dim \PGL_{10}$ and
	$6 = \dim \mathscr M_3$. In this case, by \ref{interpolation-sweep},
	we have to show that there is a 1 dimensional
	family of such curves through $13$ points, sweeping out a surface.
	But, since we know a $3$-Veronese surface passes through these $13$
	points, we have a 1 dimensional family of tricanonical genus $3$ curves
	sweeping out this Veronese surface passing through $13$ points, as desired.
\end{proof}

\subsection{Enumerating singular triads: observations and obstacles}
\label{ssec:enumerating-singular-triads}
We now discuss the obstacle we face in the computation of the number of singular triads for a general set $\Gamma_{13} \subset \bp^{2}$.  Set $S = \Bl_{\Gamma_{13}}\bp^{2}$, and let $\scl = \so_{S}(5H - E_{1} - ... -E_{13})$.

We should set up the problem on a compact, smooth space.  A natural choice is the Hilbert scheme $\Hilb_{3}S$ parameterizing length three subschemes of $S$. 

The universal scheme $\scz \subset \Hilb_{3}S \times S$ 
has two obvious projections   $\pi_{1} \from \scz \to \Hilb_{3}S$ and $\pi_{2} \from \scz \to S$.  Next, we consider the sheaf \[\scf = \pi_{1*}(\pi_{2}^{*}\scl/(\sci^{2}_{\scz} \otimes \pi_{2}^{*}\scl)).\]

Unfortunately, the sheaf $\scf$, which has generic rank $9$, fails to be locally free precisely along the locus $F \subset \Hilb_{3}S$ parameterizing degree $3$ schemes of the form $\Spec k[x,y]/(x^{2},xy,y^{2})$, also known as the ``fat points''. 

There is a natural restriction map \[\rho \from \so_{\Hilb_{3}S}^{\oplus 8} \to \scf.\]  If $\scf$ were locally free of rank $9$, we could attempt to use Porteous' formula to find the locus where the rank of $\rho$ drops to $6$. Since $\scf$ is not locally free, this approach fails from the outset.

One fix would be to work on a blow up of $\Hilb_{3}S$ along the locus $F$, but then it's unclear what should replace the sheaf $\scf$.  What's more, we would need to identify the Chern classes of the replacement sheaf in the Chow ring of $\Bl_{F}(\Hilb_{3}S)$, which is challenging in its own right. See \cite{elencwajg:l-anneau-de-chow-des-triangles-du-plan}.

Another potential fix would be to work in the {\sl nested} Hilbert scheme $\Hilb_{2,3}S$ parameterizing, $X_{2} \subset X_{3} \subset S$, pairs of length $2$ subschemes contained in length $3$ subschemes. It is known that $\Hilb_{2,3}S$ is smooth, and it has a generically finite, degree $3$ map to $\Hilb_{3}S$ given by forgetting $X_{2}$, which has one dimensional fibers (isomorphic to $\bp^{1}$) precisely over $F \subset \Hilb_{3}S$. This space $\Hilb_{2,3}S$ might be better suited for replacing the problematic sheaf $\scf$ above.  Finding a solution to these issues is the subject of ongoing work. 
As further references for enumerative geometry in the Hilbert scheme of three points, see \cite{russell:counting-singular-plane-curves-via-hilbert-schemes}, \cite{russell:degenerations-of-monomial-ideals}, and \cite{harrisP:severi-degrees-in-cogenus-3}.

\section{Further Questions} 
\label{sec:questions}

We conclude with some interesting open interpolation problems.

Since weak interpolation is equivalent to interpolation
for del Pezzo surfaces of degrees $3,4,5$, and $9$,
it is immediate from \autoref{theorem:main} that 
del Pezzo surfaces of degree $3, 4, 5$ and $9$ surfaces satisfy interpolation,
while the remaining del Pezzo surfaces satisfy weak interpolation.

\begin{question}
	\label{question:}
	Do all del Pezzo surfaces satisfy strong interpolation?
	If so, how many del Pezzo surfaces meet
	a collection of points and a linear space,
	as given in \autoref{table:del-Pezzo-conditions}?
\end{question}

It was mentioned in the introduction that plane conics
constitute all anticanonically embedded Fano varieties of dimension 1 and
del Pezzo surfaces constitute those of dimension 2.
As we have seen these both satisfy weak interpolation.
Further, there is a complete classification of Fano
varieties in dimension 3 \cite{iskovskikhP:fano-varieties}. 
Unfortunately, it is immediately clear that not all
Fano varieties in dimension more than 3 satisfy interpolation.
A counterexample is provided by the complete intersection of
a quadric and cubic hypersurface in $\bp^5$. 
This leads to the following question:
\begin{question}
	\label{question:}
	Which Fano threefolds, embedded by their
	anticanonical sheaf, satisfy weak interpolation?
	Which Fano threefolds, embedded by their anticanonical sheaf,
	satisfy interpolation? Which Fano varieties in dimension more than $3$ satisfy
	interpolation?
\end{question}

In another direction, we may note that surfaces of minimal degree satisfy
interpolation, that is, surfaces of degree $d-1$ in $\bp^d$ satisfy
interpolation by \autoref{thm:stronginterpscroll}.
In this paper, we show that all smooth surfaces of one more than minimal
degree, which are not projections of surfaces of degree $d$ from $\bp^{d+1}$ 
as described in \cite[Theorem 2.5]{coskun:the-enumerative-geometry-of-del-Pezzo-surfaces-via-degenerations}.
satisfy interpolation. That is, surfaces of degree $d$ in $\bp^d$ satisfy interpolation. 

\begin{question}
	\label{question:}
	Do all smooth surfaces of degree $d$ in $\bp^d$ satisfy
	interpolation? Equivalently, using
	\cite[Theorem 2.5]{coskun:the-enumerative-geometry-of-del-Pezzo-surfaces-via-degenerations}
	\autoref{theorem:main},
			do projections of surfaces of minimal
	degree from a point 
	satisfy interpolation?
\end{question}

While all smooth linearly normal nondegenerate surfaces of degree $d$ in $\bp^d$
satisfy weak interpolation, note that not all surfaces of degree $d+2$
in $\bp^d$ will satisfy interpolation.
This is because the complete
intersection of a quadric and cubic hypersurface in $\bp^4$ does not
satisfy interpolation.
So, in some way, surfaces of degree $d + 1$ in $\bp^d$
are the turning point between surfaces satisfying interpolation
and surfaces not satisfying interpolation.
This leads naturally to the following question.

\begin{question}
	\label{question:}
	Do surfaces of degree $d + 1$ in $\bp^d$ satisfy interpolation?
\end{question}

From \autoref{thm:stronginterpscroll}, we know that varieties of dimension
$k$ and degree $d$ in $\bp^{d+k-1}$ satisfy interpolation.
In this paper we have seen that varieties of degree $2$ and
dimension $2$ (which are nondegenerate and not projections
of varieties of minimal degree)
in
$\bp^{d + 2 - 2} = \bp^d$ satisfy interpolation.
This too  offers an immediate generalization.

\begin{question}
	\label{question:}
	Do varieties of dimension $k$ and degree $d$ in
	$\bp^{d+k-2}$ satisfy interpolation?
\end{question}

Similarly, we have seen the very beginnings of interpolation
for Veronese embeddings. That is, by the discussion in
\ref{sssec:castelnuovos-lemma},
all rational normal curves which are the Veronese embeddings
of $\bp^1$ satisfy interpolation. In general, interpolation
of the $r$-Veronese embedding of $\bp^n$ which is the image of
$\bp^n \rightarrow \bp^{\binom{n+r}{n}-1}$ is equivalent
to the question of whether the Veronese surface
passes through $\binom{n+r}{r} + n +1$ points.
Unlike the del Pezzo surfaces and rational normal scrolls,
Veronese embeddings are a class of varieties for
which interpolation only imposes point conditions, and not
an additional linear space condition. Perhaps
this coincidence may be helpful in finding the solution to
the following question.

\begin{question}
	\label{question:veronese-interpolation}
	Does the image of the $r$-Veronese embedding $\bp^n \rightarrow \bp^{\binom{n+r}{r}-1}$ satisfy interpolation?
	That is, is there a Veronese embedding containing
	$\binom{n+r}{r} + n + 1$ general points in $\bp^{\binom{n+r}{r}-1}$?
\end{question}

If the answer to ~\autoref{question:veronese-interpolation}
is affirmative, it would be very interesting to know how many
Veronese varieties pass through the correct number of points.
Using \ref{sssec:castelnuovos-lemma},
we know there is precisely one $r$-Veronese $\bp^1$ through 
$\binom{r+1}{1} + 1 +1  = r+3$ points in $\bp^r$.
Additionally,  \autoref{theorem:counting-veronese-interpolation}
tells us there are $4$ $2$-Veronese surfaces through $9$ general
points in $\bp^5$. In this paper, we have shown that there are at least $630$ $3$-Veronese surfaces through $13$ general points in $\bp^{9}$. See \autoref{remark:630}.

\begin{question}
	\label{question:veronese-enumeration}
	How many $r$-Veronese varieties of dimension
	$n$ pass through
	$\binom{n+r}{r} + n + 1$ general points in $\bp^{\binom{n+r}{r}-1}$?
\end{question}

We have also seen in Coble's work that any two $2$-Veronese
surfaces through $9$ general points in $\bp^5$
intersect along a genus 1 curve through those $9$ points.
This leads to the question:
\begin{question}
	\label{question:intersect-curve}
	Suppose there are at least two $r$-Veronese varieties
	of dimension $n$ passing through
	$\binom{n+r}{r} + n + 1$ general points in $\bp^{\binom{n+r}{r}-1}$.
	Do they have positive dimensional intersection?
\end{question}

\appendix
\section{Interpolation in General}
\label{sec:interpolation-in-general}

In this section, we define various notions of interpolation, and
prove they are all equivalent under mild hypotheses in
~\autoref{theorem:equivalent-conditions-of-interpolation}.
Many of the results are likely well-known to experts, but we could not find precise references, so we give proofs for completeness.

\ssec{Definition and Equivalent Characterizations of Interpolation}
We now lay out the key definitions of interpolation.
First, we describe a more formal way of expressing interpolation
in ~\autoref{definition:interpolation}.
This comes in two flavors: interpolation, and pointed interpolation.
The latter also keeps track of the points at which the planes
meet the given variety. Then, we give a cohomological
definition in ~\autoref{definition:vector-bundle-interpolation}.

\begin{definition}
	\label{definition:Hilbert-scheme-component}
	Let $X \subset \bp^n$ be projective scheme with a fixed embedding into projective space which lies on a unique irreducible component
	of the Hilbert scheme.
	Define $\hilb X$ to be the irreducible component of the Hilbert scheme on which $[X]$ lies, taken with reduced scheme structure.
	If $\sch$ is the Hilbert scheme of closed subschemes of $\bp^n$ over $\spec k$ and $\scv$
	is the universal family over $\sch$, then define
	define $\uhilb X$ to be the universal family over $\hilb X$, defined as the
	fiber product
	\begin{equation}
	\nonumber
		\nonumber
		\begin{tikzcd} 
		  \uhilb X \ar{r} \ar{d} & \scv \ar{d} \\
		  \hilb X \ar{r} & \sch.
		\end{tikzcd}\end{equation}
\end{definition}

\begin{definition}
	\label{definition:lambda-constraints}
	Given an integral subscheme of the Hilbert scheme $U$
	parameterizing subschemes of $\bp^n$ of dimension $k$,
	call a sequence
	\begin{align*}
	\lambda := \left( \lambda_1, \ldots, \lambda_m \right)
\end{align*}
{\bf admissible} if it satisfies the following conditions:
\begin{enumerate}
	\item $\lambda$ is a weakly decreasing sequence.
		That is, $\lambda_1 \geq \lambda_2 \geq \cdots \geq \lambda_m$,
	\item for all $1 \leq i \leq m$, we have $0 \leq \lambda_i \leq n - k$,
	\item and
\begin{align*}
	\sum_{i=1}^m \lambda_i \leq \dim U.
\end{align*}
\end{enumerate}
\end{definition}

\begin{definition}
	\label{definition:interpolation}	
	Let $U$ be an integral subscheme of the Hilbert scheme
	parameterizing subschemes of $\bp^n$ of dimension $k$
	and let $\scv(U)$ denote the universal family over $U$.
	Let $\lambda$ be admissible and let 
	$\Lambda_i$ be a plane of dimension $n - k - \lambda_i$ for $1 \leq i \leq m$.
Define
\begin{align*}
	\Psi := \left( \uhilb {\Lambda_1} \times_{\bp^n} \scv(U) \right) \times_{U} \cdots \times_{U} \left( \uhilb {\Lambda_m} \times_{\bp^n} \scv(U) \right).
\end{align*}
Then, since $\hilb {\Lambda_i} \cong Gr(n- k - \lambda_i + 1, n+1)$, define $\Phi$ to be the scheme theoretic image of the composition
\begin{equation}
	\nonumber
	\begin{tikzcd} 
	  \Psi \ar{r} & U \times \prod_{i=1}^m Gr(n - k - \lambda_i + 1, n + 1) \times (\bp^n)^m \ar{d} \\
		 & U \times \prod_{i=1}^m Gr(n - k - \lambda_i + 1, n + 1).
	\end{tikzcd}\end{equation}
We have natural projections
\begin{equation}
	\nonumber
	\begin{tikzcd}
		\qquad & \Phi \ar {ld}{\pi_1} \ar {rd}{\pi_2} & \\
		U  &&  \prod_{i=1}^m Gr(n - k - \lambda_i +1, n+1)
	 \end{tikzcd}\end{equation}
and
\begin{equation}
	\nonumber
	\begin{tikzcd}
		\qquad & \Psi \ar {ld}{\eta_1} \ar {rd}{\eta_2} & \\
		U  &&  \prod_{i=1}^m Gr(n - k - \lambda_i +1, n+1).
	 \end{tikzcd}\end{equation}

Define $q$ and $r$ so that $\dim U = q \cdot(n-k) + r$ with $0 \leq r < n - k$. Then,
$U$ satisfies
\begin{enumerate}
	\item {\bf $\lambda$-interpolation}
if the projection map $\pi_2$ is surjective.
	\item {\bf weak interpolation} if $U$ satisfies $((n - k)^q)$-interpolation
	\item {\bf interpolation} if $U$ satisfies $((n - k)^q, r)$-interpolation
	\item {\bf strong interpolation}
if $U$ satisfies $\lambda$-interpolation for all admissible $\lambda$.
\end{enumerate}

We define  $\lambda$-pointed interpolation, weak pointed interpolation, pointed interpolation, strong pointed interpolation similarly. More precisely, we say that $U$ satisfies
\begin{enumerate}
	\item {\bf $\lambda$-pointed interpolation} if $\eta_2$
is surjective
\item {\bf weak pointed interpolation} if $U$ satisfies
	$\left( (n - k)^q \right)$-pointed interpolation
\item {\bf pointed interpolation} if $U$ satisfies
$\left( (n - k)^q, r \right)$-pointed interpolation
\item {\bf strong pointed interpolation} if $U$ satisfies
$\lambda$-pointed interpolation for all admissible $\lambda$.
\end{enumerate}

If $X \subset \bp^n$ lies on a unique irreducible
component of the Hilbert scheme $\hilb X$, we say $X$ satisfies $\lambda$-interpolation (and all variants as
above) if $\hilb X$ satisfies $\lambda$-interpolation.
\end{definition}
\begin{remark}
	\label{remark:pointed-interpolation-equivalent-to-non-pointed}
	Note that $U$ satisfies $\lambda$-interpolation if and only if it satisfies $\lambda$-pointed interpolation:
	$\eta_2$ factors through $\Phi$,
	and the restriction map $\Psi \rightarrow \Phi$ is surjective,
	so $\eta_2$ is surjective if and only if $\pi_2$ is.
	Nevertheless, it is useful to refer to these two notions separately, which is why we give
	them two separate names.
\end{remark}

\begin{definition}[Interpolation of locally free sheaves, see Definition 3.1 of ~\cite{atanasov:interpolation-and-vector-bundles-on-curves}]
	\label{definition:vector-bundle-interpolation}
	Let $\lambda$ be admissible and let $E$ be a locally free sheaf on a scheme $X$
	with $H^1(X, E) = 0$.
	Choose points $p_1, \ldots, p_m$ on $X$ and
	vector subspaces $V_i \subset E|_{p_i}$ for $1 \leq i \leq m$
	with $\codim V_i = \lambda_i$.
	Then, define $E'$ so that we have an exact sequence of coherent sheaves on $X$
	\begin{equation}
		\label{equation:sequence-cohomological-interpolation-definition}
		\begin{tikzcd}
		  0 \ar{r} & E' \ar{r} & E \ar{r} & \oplus_{i=1}^m E|_{p_i}/V_i \ar{r} & 0.
		\end{tikzcd}\end{equation}
	We say $E$ satisfies {\bf $\lambda$-interpolation}
	if there exist points $p_1, \ldots, p_n$ as above
	and subspaces $V_i \subset E|_{p_i}$ as above
	so that
	\begin{align*}
		h^0(E) - h^0(E') = \sum_{i=1}^m \lambda_i.
	\end{align*}

	Write $h^0(E) = q \cdot \rk E + r$ with $0 \leq r < \rk E$.
	We say $E$ satisfies
	\begin{enumerate}
		\item {\bf weak interpolation} if it satisfies
			$\left( (\rk E)^q \right)$
			interpolation.
		\item {\bf interpolation} if it satisfies
			$\left( (\rk E)^q, r \right)$ interpolation.
		\item {\bf strong interpolation} if it satisfies
			$\lambda$-interpolation for all
			admissible $\lambda$.
	\end{enumerate}
\end{definition}
\begin{remark}
	\label{remark:}
	See ~\cite[Section 4]{atanasovLY:interpolation-for-normal-bundles-of-general-curves} for further useful properties of interpolation.
	While some of the discussion there is specific
	to curves, much of it generalizes immediately to higher
	dimensional varieties. 
\end{remark}

We now come to the main result of the section. 
Because it has so many moving parts, after stating
it, we postpone its proof until
~\autoref{ssec:proof-of-equivalent-conditions-of-interpolation},
after we have developed the tools necessary to prove it.

Perhaps the most nontrivial consequence of ~\autoref{theorem:equivalent-conditions-of-interpolation}
is that it implies the equivalence of interpolation and strong interpolation
for $\hilb X$ when $X$ is a smooth projective scheme with $H^1(X, N_{X/\bp^n}) = 0$,
over an algebraically
closed field of characteristic $0$.

\begin{theorem}
	\label{theorem:equivalent-conditions-of-interpolation}
	Assume $X \subset \bp^n$ is an integral projective scheme
	lying on a unique irreducible component of the Hilbert scheme.
	Write $\dim \hilb X = q \cdot \codim X + r$ with $0 \leq r < \codim X$.
	The following are equivalent:
	\begin{enumerate}
		\item[\customlabel{interpolation-definition}{(1)}]$\hilb X$ satisfies interpolation.
		\item[\customlabel{interpolation-pointed}{(2)}] $\hilb X$ satisfies pointed interpolation.
		\item[\customlabel{interpolation-dominant}{(3)}] The map $\pi_2$ given in 
			~\autoref{definition:interpolation} 
			for $\lambda = (\left(\codim X \right)^q, r)$
			is dominant.
		\item[\customlabel{interpolation-finite}{(4)}] The map $\pi_2$ given in 
			~\autoref{definition:interpolation} 
			for $\lambda = (\left(\codim X \right)^q, r)$
			is
			generically finite.
		\item[\customlabel{interpolation-isolated}{(5)}] The scheme $\Phi$ defined in 
			~\autoref{definition:interpolation}
			for $\lambda = (\left(\codim X \right)^q, r)$
			has a closed point $x$ which is isolated in
			its fiber $\pi_2^{-1}(\pi_2(x))$.
	\item[\customlabel{interpolation-pointed-dominant}{(6)}] The map $\eta_2$ given in 
			~\autoref{definition:interpolation} 
			for $\lambda = (\left(\codim X \right)^q, r)$
			is dominant.
		\item[\customlabel{interpolation-pointed-finite}{(7)}] The map $\eta_2$ given in 
			~\autoref{definition:interpolation} 
			for $\lambda = (\left(\codim X \right)^q, r)$
			is
			generically finite.
		\item[\customlabel{interpolation-pointed-isolated}{(8)}] The scheme $\Psi$ defined in 
			~\autoref{definition:interpolation}
			for $\lambda = (\left(\codim X \right)^q, r)$
			has a closed point $x$ which is isolated in
			its fiber $\eta_2^{-1}(\eta_2(x))$.
		\item[\customlabel{interpolation-naive}{(9)}] For any set of $q$ points in $\bp^n$ and an $(\codim X - r)$-dimensional
			plane $\Lambda \subset \bp^n$, there exists
			an element $[Y] \in \hilb X$ so that $Y$ contains those points and
			meets $\Lambda$.
		\item[\customlabel{interpolation-sweep}{(10)}] For any set of $q$ points in $\bp^n$,
			the subscheme of $\bp^n$ swept out by varieties
			of $\hilb X$ containing those points is
			$\dim X + r$ dimensional.
	\end{enumerate}
	Secondly, the following statements are equivalent:
	\begin{enumerate}[(i)]
		\item[\customlabel{strong-definition}{(i)}] $\hilb X$ satisfies strong interpolation.
		\item[\customlabel{strong-equality}{(ii)}] $\hilb X$ satisfies $\lambda$-interpolation for all $\lambda$ with $\sum_{i=1}^m \lambda_i = \dim \hilb X$.

		\item[\customlabel{strong-pointed}{(iii)}] $\hilb X$ satisfies strong pointed interpolation.
		\item[\customlabel{strong-pointed-equality}{(iv)}] $\hilb X$ satisfies $\lambda$-pointed interpolation for all $\lambda$ with $\sum_{i=1}^m \lambda_i = \dim \hilb X$.
		\item[\customlabel{strong-naive}{(v)}] For any collection of planes $\Lambda_1, \ldots, \Lambda_m$ with $(\dim \Lambda_1, \ldots, \dim \Lambda_n)$ admissible,
			there is some $[Y] \in \hilb X$ meeting
			all of $\Lambda_1, \ldots, \Lambda_m$.
		\item[\customlabel{strong-naive-equality}{(vi)}] For any collection of planes $\Lambda_1, \ldots, \Lambda_m$ with $(\dim \Lambda_1, \ldots, \dim \Lambda_n)$ admissible, 
			with $\sum_{i=1}^m \lambda_i = \dim \hilb X$,
			there is some $[Y] \in \hilb X$ meeting
			all of $\Lambda_1, \ldots, \Lambda_m$.
	\end{enumerate}
Also, \ref{strong-definition}-\ref{strong-naive-equality}
imply
\ref{interpolation-definition}-\ref{interpolation-sweep}.
Thirdly, 
further assume $H^1(X, N_X) = 0$ and $X$ is a local complete intersection.
Then, the following properties are equivalent:
	\begin{enumerate}[(a)]
		\item[\customlabel{cohomological-definition}{(a)}] The sheaf $N_{X/\bp^n}$ satisfies interpolation.
		\item[\customlabel{cohomological-restatement}{(b)}] There is a subsheaf $E' \rightarrow N_{X/\bp^n}$ 
whose cokernel is supported at $q+1$ points if $r > 0$ and $q$ points if $r = 0$,
so that the scheme theoretic support at $q$ of these points has
dimension equal to $\rk N_{X/\bp^n}$
and $H^0(X, E') = H^1(X, E') = 0$.
		\item[\customlabel{cohomological-strong}{(c)}] The sheaf $N_{X/\bp^n}$ satisfies strong interpolation.
		\item[\customlabel{cohomological-sections}{(d)}] For every $d \geq 1$, there exist points
			$p_1, \ldots, p_d \in X$ so that
			\begin{align*}
				\dim H^0(X, N_{X/\bp^n} \otimes \sci_{p_1, \ldots, p_d}) = \max\left\{ 0, h^0(X, N_{X/\bp^n}) - dn \right\}
			\end{align*}
			(cf. ~\cite[Definition 4.1]{atanasovLY:interpolation-for-normal-bundles-of-general-curves}).
			
		\item[\customlabel{cohomological-vanish}{(e)}] For every $d \geq 1$, a general collection of points
			$p_1, \ldots, p_d$ in $X$ satisfies either
			\begin{align*}
				h^0(X, N_{X/\bp^n} \otimes \sci_{p_1, \ldots, p_d}) = 0 && \text{ or } && h^1(X, N_{X/\bp^n}\otimes \sci_{p_1, \ldots, p_d}) = 0
			\end{align*}
			(cf. ~\cite[Proposition 4.5]{atanasovLY:interpolation-for-normal-bundles-of-general-curves}).
		\item[\customlabel{cohomological-boundary}{(f)}] 			A general set of $q$ points
			$p_1, \ldots, p_q$
			satisfy $h^1(X, N_{X/\bp^n} \otimes \sci_{p_1, \ldots, p_q}) = 0$
			and a general set of $q+1$ points
			$q_1, \ldots, q_{q+1}$ satisfy
			$h^0(X, N_{X/\bp^n} \otimes \sci_{p_1, \ldots, p_q}) = 0$
			(cf. ~\cite[Proposition 4.6]{atanasovLY:interpolation-for-normal-bundles-of-general-curves}).
	\end{enumerate}
Additionally, 
retaining the assumptions that $H^1(X, N_X) = 0$ 
	and $X$ is a local complete intersection, 
	and further assuming $X$ is generically smooth,	
	the equivalent conditions
\ref{cohomological-definition}-\ref{cohomological-boundary} imply the equivalent conditions
\ref{interpolation-definition}-\ref{interpolation-sweep} and the equivalent conditions \ref{cohomological-definition}-\ref{cohomological-boundary} imply the equivalent conditions \ref{strong-definition}-\ref{strong-naive-equality}.

Finally, still 
retaining the assumptions that $H^1(X, N_X) = 0$ 
	and that $X$ is a local complete intersection, in the case that $k$ has characteristic $0$,
all statements \ref{interpolation-definition}-\ref{interpolation-sweep}, \ref{strong-definition}-\ref{strong-naive-equality}, \ref{cohomological-definition}-\ref{cohomological-boundary} are equivalent.
\end{theorem}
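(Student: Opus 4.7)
The plan is to split the theorem into three blocks of equivalences --- the geometric conditions (1)--(10), the strong-interpolation conditions (i)--(vi), and the cohomological conditions (a)--(f) --- handle each block separately, and then establish the inter-block implications, reserving the characteristic zero hypothesis for the final closing step. The unifying observation for the first block is that whenever $\lambda$ is admissible with $\sum_i \lambda_i = \dim U$, the scheme $\Phi$ is irreducible of the same dimension as its target $\prod_i Gr(n-k-\lambda_i+1, n+1)$, since over each $[Y] \in U$ the fiber of $\pi_1$ is a product of Schubert-type cycles of codimensions $\lambda_i$. Granting this, dominance, surjectivity (using properness of Grassmannians), generic finiteness, and existence of a closed point isolated in its fiber all coincide for a proper morphism between irreducible varieties of equal dimension; this disposes of (1), (3), (4), (5). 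The pointed versions (6), (7), (8) follow identically for $\Psi$, and (1)$\Leftrightarrow$(2) by \autoref{remark:pointed-interpolation-equivalent-to-non-pointed}. Items (9) and (10) are immediate unpackings: (9) is the set-theoretic surjectivity of $\pi_2$, and (10) follows by computing the dimension of the image in $\bp^n$ of the universal family over the fiber of $\pi_1$.

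For the strong-interpolation block, the key reduction is that any admissible $\lambda$ can be refined to a boundary admissible $\lambda'$ (with $\sum \lambda'_i = \dim U$ and $\lambda'_i \ge \lambda_i$); since intersecting with a plane of larger dimension is a strictly weaker incidence condition, $\lambda'$-interpolation implies $\lambda$-interpolation. This yields (ii)$\Rightarrow$(i), and the reverse is immediate; the pointed analogue is identical, and (v), (vi) are direct restatements. For the cohomological block, the defining sequence \eqref{equation:sequence-cohomological-interpolation-definition} gives $\chi(E') = \chi(E) - \sum \lambda_i$, so at the ``boundary'' value of $\sum \lambda_i$ exactly one of $h^0(E') = 0$ or $h^1(E') = 0$ can hold. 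Upper semicontinuity and a general-position argument let one exchange the two vanishings as the number of imposed points crosses this boundary, yielding (d)$\Leftrightarrow$(e)$\Leftrightarrow$(f); then (a), (b), (c) follow by choosing boundary $\lambda$'s and unpacking the definitions.

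The main inter-block implication is (cohomological)$\Rightarrow$(geometric). Under the assumptions $H^1(X, N_X) = 0$ and $X$ l.c.i., the Hilbert scheme is smooth at $[X]$ with tangent space $H^0(X, N_{X/\bp^n})$, and the tangent space at a point of $\Phi$ lying over a fixed configuration of linear spaces is identified with $H^0$ of a subsheaf $E' \subset N_{X/\bp^n}$ of exactly the form appearing in \eqref{equation:sequence-cohomological-interpolation-definition}. Cohomological interpolation then forces the expected dimension on $\Phi$ at such a point, yielding condition (5). The main obstacle, and the reason the characteristic zero hypothesis is needed for the converse (geometric)$\Rightarrow$(cohomological), is that in positive characteristic a dominant morphism between smooth irreducible varieties need not be generically smooth --- surjectivity of $\pi_2$ alone does not force the relevant differentials to be surjective, and without such surjectivity one cannot deduce the cohomological vanishings. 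In characteristic zero, generic smoothness (Sard--Bertini) upgrades dominance of $\pi_2$ to generic smoothness, and translating surjectivity of the differential back through the tangent-space identifications recovers the cohomological condition, closing the loop.
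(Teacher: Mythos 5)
Your proposal is correct and follows essentially the same route as the paper: split the statement into the three blocks, prove the first block via irreducibility of $\Phi$, $\Psi$ and a dimension count making $\pi_2$, $\eta_2$ proper maps of equidimensional irreducible varieties, reduce the strong block to the boundary case by padding $\lambda$, defer the cohomological block to Euler characteristics and semicontinuity, and glue the blocks via the tangent-space identification of $d\eta_2$ with the restriction map on $H^0(N_{X/\bp^n})$, invoking generic smoothness only for the converse direction in characteristic zero. Two small imprecisions worth fixing: your irreducibility argument asserts the $\pi_1$-fibers are ``products of Schubert-type cycles,'' but this is only literally true when the member $Y$ is itself irreducible, so one first needs the fact (proved in the paper via upper semicontinuity of the number of geometric components) that the general member of $\hilb X$ is integral; and at the boundary value $\sum\lambda_i=\dim\hilb X$ one has $\chi(E')=0$, so $h^0(E')=h^1(E')$ and the correct conclusion is that both vanish or neither does, not that ``exactly one'' can hold.
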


We develop the tools to prove
~\autoref{theorem:equivalent-conditions-of-interpolation}
in subsections \ref{ssec:tools-irreducibility}, \ref{ssec:tools-dimensions}, and \ref{ssec:tools-additional},
and then give a proof
of ~\autoref{theorem:equivalent-conditions-of-interpolation}
in ~\autoref{ssec:proof-of-equivalent-conditions-of-interpolation}.

\begin{remark}
	\label{remark:smoothness-of-Hilbert-scheme-for-vanishing-normal-bundle-cohomology}
	Note that if $H^1(X, N_{X/\bp^n})= 0$ 
	and $X$ is a local complete intersection,
	(the latter condition is satisfied for all smooth $X$,)
	then
	$X$ has no local obstructions to deformation
	by \cite[Corollary 9.3]{Hartshorne:deformation}.
	So, by ~\cite[Corollary 6.3]{Hartshorne:deformation},
	$[X]$ is a smooth point of the Hilbert scheme.
\end{remark}

\begin{remark}
	\label{remark:}
	We note that the equivalence of all conditions from
	\autoref{theorem:equivalent-conditions-of-interpolation}
	requires the characteristic $0$ hypothesis, as it does not hold in characteristic $2$.

	The $2$-Veronese surface
	over an algebraically closed field of characteristic $2$ provides
	an example of a variety which satisfies interpolation but whose normal
	bundle does not satisfy interpolation, as is shown in \cite[Corollary 7.2.9]{landesman:undergraduate-thesis}. 
\end{remark}

\ssec{Tools for Irreducibility of Incidence Correspondences}
\label{ssec:tools-irreducibility}

A key ingredient for establishing the equivalence of conditions
\ref{interpolation-definition}-\ref{interpolation-sweep}
is the irreducibility of the incidence correspondences $\Phi, \Psi$ 
(~\autoref{definition:interpolation}).
We use this to establish that the following properties of the map
$\pi_2$ (~\autoref{definition:interpolation}) are equivalent:
(1) it is surjective, (2) it is dominant, (3) it is generically finite, 
and (4) it has an isolated point
in some fiber.
Our goal for this subsection is to prove
~\autoref{proposition:irreducible-and-dimension}.

We start with a general upper semicontinuity result, which we will use in 
~\autoref{proposition:irreducible-and-dimension}
to show that if $X$ is integral than so is $\hilb X$, which we will then use in~\autoref{proposition:irreducible-and-dimension}
to conclude that $\Phi$ and $\Psi$ are irreducible of the same dimension.
\begin{proposition}
	\label{proposition:upper-semicontinuous-number-of-components}
	Let $f: X \ra Y$ be a flat proper map of finite type schemes over an arbitrary field
	so that the fibers over the closed points of $Y$ are geometrically reduced. Then, the number of irreducible components of the geometric fiber of a point in $Y$
	is upper semicontinuous on $Y$.
\end{proposition}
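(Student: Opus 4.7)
The plan is to reduce the statement to showing that the function $n \from Y \to \bz_{\geq 0}$ sending $y$ to the number of geometric irreducible components of $X_{\bar y}$ is both constructible and specialization-nondecreasing (if $y_0 \in \overline{\{y_1\}}$ then $n(y_1) \leq n(y_0)$); a constructible function on a Noetherian scheme satisfying this property is upper semicontinuous. First, since $X$ and $Y$ are of finite type over a field, $Y$ is Jacobson, so the constructible locus of $Y$ over which fibers of $f$ are geometrically reduced---which contains every closed point of $Y$ by hypothesis---must equal all of $Y$, and therefore every fiber of $f$ is geometrically reduced.

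For constructibility I would use Noetherian induction. Reducing to the case where $Y$ is integral with generic point $\eta$, I would pass to a finite surjective cover $Y' \to Y$ whose function field is large enough that every geometric irreducible component of $X_\eta$ is defined over it. Taking the closures in $X_{Y'}$ of the (now geometrically irreducible) components of the generic fiber, and applying generic flatness, I obtain a dense open $V' \subset Y'$ over which each closure is flat with geometrically irreducible fibers. Then $n$ is constant on the image of $V'$ in $Y$, and Noetherian induction on the complement completes the argument.

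For specialization compatibility, I would use the standard trait trick to reduce to the case $Y = \Spec R$ with $R$ a DVR, and then base change further to a DVR extension $R'$ whose fraction field $K'$ realizes all geometric components of $X_K$, so that $n(\bar\eta)$ equals the number of irreducible components of $X_{K'}$. Flatness of $X_{R'}$ over $R'$ forces every irreducible component $X_i$ of $X_{R'}$ to dominate $\Spec R'$ and hence to be itself flat over $R'$. The key claim is that distinct $X_i,X_j$ contribute disjoint sets of irreducible components to $X_{k'}$: if $\zeta$ were the generic point of a common irreducible component of $X_{k'}$, then reducedness of $X_{k'}$ would make $\sco_{X_{k'},\zeta}$ a field, and combined with the fact that a uniformizer $\pi \in R'$ is a nonzerodivisor in $\sco_{X_{R'},\zeta}$, this would force $\sco_{X_{R'},\zeta}$ to be a regular local ring of dimension~$1$, hence an integral domain---contradicting the hypothesis that $\zeta$ lies on two distinct components. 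Combining with the observation that irreducible component counts are non-decreasing under field extension yields $n(\bar\eta) \leq n(\bar s)$.

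The main obstacle will be the case in which irreducible components of $X_{K'}$ have differing dimensions, so that an irreducible component of $(X_i)_{k'}$ need not be an irreducible component of $X_{k'}$. In that setting the local-ring argument above directly handles the top-dimensional horizontal components, and one must additionally show that reducedness of $X_{k'}$ prevents the special fiber of a lower-dimensional horizontal component from being entirely absorbed into a higher-dimensional component's special fiber---an absorption that would introduce embedded primes or nilpotents in $\sco_{X_{k'}}$, violating the reducedness hypothesis. This dimension-by-dimension analysis is the most technically delicate ingredient of the proof.
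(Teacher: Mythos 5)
Your approach is genuinely different from the paper's. The paper dispatches the result in a few lines by quoting two facts from EGA IV, \S 12.2.4: part (v) gives that the locus of geometrically reduced fibers is open (so the hypothesis on closed fibers propagates to all fibers), and part (ix) gives that the \emph{total multiplicity} of the geometric fiber is upper semicontinuous once one knows the fibers have no embedded components; for a reduced scheme over an algebraically closed field the total multiplicity is exactly the number of irreducible components, and one is done. You instead attempt a from-scratch argument by establishing constructibility of the component-count and monotonicity under specialization. That global strategy is sound, and the constructibility half is a routine spreading-out argument (though one should cite, e.g., EGA~IV~9.7.7 rather than plain generic flatness to get the open locus with geometrically irreducible fibers). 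The point where you deviate most sharply from the paper is in the DVR reduction for specialization: your local-ring argument correctly shows that a generic point of $X_{k'}$ lies on only one horizontal component $X_i$, but it does not by itself yield $n(\bar s) \ge n(\bar\eta)$.

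The gap is exactly the one you flag: you must rule out the ``absorption'' phenomenon, where a lower-dimensional horizontal component $X_i$ has $(X_i)_{k'}$ contained set-theoretically in $(X_j)_{k'}$ for some higher-dimensional $X_j$, so that $X_i$ fails to contribute a generic point of $X_{k'}$ at all and your partition of the components of $X_{k'}$ by horizontal component has an empty part. You assert that absorption would force an embedded prime or nilpotent in $X_{k'}$, but you do not prove it, and the local-ring dimension-$1$ argument you used for the top-dimensional case does not apply at a generic point $\zeta$ of $(X_i)_{k'}$ that is \emph{not} a generic point of $X_{k'}$, because there $\sco_{X_{k'},\zeta}$ is not a field. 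This missing step is not a technicality: it is precisely the content that EGA's total-multiplicity theorem supplies. In a reduced special fiber each component counts with multiplicity one, and if $X_i$ were absorbed the total multiplicity of the special fiber would drop below that of the generic fiber, contradicting 12.2.4(ix). One can also close the gap by a degree/Hilbert-polynomial comparison of $X_{K'}$ with $X_{k'}$ using flatness, but that argument requires care when there are several components of varying dimensions. As written, your proposal identifies the correct strategy and correctly locates its hard core, but that core is left open; the paper sidesteps it entirely by citing the EGA machinery.
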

This proof is that outlined in nfdc23's comments in ~\cite{MO:why-is-the-number-of-irreducible-components-upper-semicontinuous-in-nice-situations}.
\begin{proof}
	To start, note that by \cite[Th\'eor\`eme 12.2.4(v)]{EGAIV.3}, the set of points in $Y$ with geometrically reduced
fiber is open, and hence all fibers of $f$ are geometrically reduced, as all closed fibers are.
Then, by \cite[Th\'eor\`eme 12.2.4(ix)]{EGAIV.3}, since the geometric fibers of $f$ 
are reduced and hence have no embedded points, we obtain that the total multiplicity,
as defined in \cite[D\'efinition 4.7.4]{EGAIV.2}, is upper semicontinuous.
Since the total multiplicity of a reduced scheme over an algebraically closed field is equal to the number of irreducible
components,
the number of irreducible components of the geometric fibers is upper semicontinuous on the target.
\end{proof}

\begin{proposition}
	\label{proposition:irreducible-and-dimension}
	Suppose $X$ is an integral scheme. Then, $\Phi, \Psi$
	as defined in ~\autoref{definition:interpolation} are irreducible and $\dim \Phi = \dim \Psi$.
\end{proposition}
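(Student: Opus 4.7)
The plan is to realize $\Psi$ as a tower of Zariski-locally trivial Grassmannian bundles over $U$, thereby reducing irreducibility of $\Psi$ to irreducibility of the iterated universal family $\scv^{(m)} := \scv(U)\times_U\cdots\times_U\scv(U)$ ($m$ factors), which parameterizes tuples $([Y],p_1,\ldots,p_m)$ with $p_i\in Y$. Note that $U = \hilb X$ is integral by \autoref{definition:Hilbert-scheme-component}, since $X$ is integral.

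To set up the tower, I would observe that the incidence variety $J_i := \{(p,\Lambda): p\in\Lambda\}\subset \bp^n\times Gr(n-k-\lambda_i+1,n+1)$ is a Zariski-locally trivial Grassmannian bundle over $\bp^n$ (identified with the relative Grassmannian of the tautological quotient bundle on $\bp^n$). Pulling back each $J_i$ along the $i$-th evaluation $\scv^{(m)}\to\bp^n$ and taking the fiber product over $\scv^{(m)}$ recovers $\Psi\to\scv^{(m)}$ as a Zariski-locally trivial fibration with irreducible fiber $\prod_i Gr(n-k-\lambda_i,n)$. Hence it suffices to prove that $\scv^{(m)}$ is irreducible.

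For this, I would combine flatness with irreducibility of the generic fiber. The universal family $\scv(U)\to U$ is flat, so $\scv^{(m)}\to U$ is flat as an iterated base change. The locus $U^{\irr}\subset U$ of geometrically integral $[Y]$ is open: \cite[Th\'eor\`eme 12.2.4(v)]{EGAIV.3} makes the geometrically reduced locus open in $U$, and on this open set \autoref{proposition:upper-semicontinuous-number-of-components} applies (the restricted universal family is still flat and proper with geometrically reduced fibers), giving upper semicontinuity of the number of geometric components of fibers. Since $[X]\in U^{\irr}$ and $U$ is irreducible, $U^{\irr}$ is dense, so the generic $Y$ is geometrically integral; because products of geometrically integral schemes over an algebraically closed field stay integral, the generic fiber $Y_\eta^m$ of $\scv^{(m)}\to U$ is geometrically irreducible. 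Flatness forces every associated point of $\scv^{(m)}$ to lie over the unique associated point of $U$, and irreducibility of the generic fiber then gives a unique such associated point. Hence $\scv^{(m)}$, and therefore $\Psi$, is irreducible.

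Finally, $\Phi$ is irreducible as the scheme-theoretic image of the irreducible $\Psi$ under the projection forgetting the $p_i$. For the dimension equality, I would show $\Psi\to\Phi$ is generically finite: the fiber over a general $([Y],\Lambda_1,\ldots,\Lambda_m)\in\Phi$ equals $\prod_i(Y\cap\Lambda_i)$, and each factor is finite because for $\lambda_i=0$ a generic $(n-k)$-plane meets the $k$-dimensional $Y$ in $\deg Y$ reduced points, while for $\lambda_i>0$ a $\Lambda_i$ generic in the proper subvariety of $Gr(n-k-\lambda_i+1,n+1)$ of those meeting $Y$ does so transversely in a single point. Combined with dominance (automatic from the definition of scheme-theoretic image), this yields $\dim\Psi=\dim\Phi$. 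The main obstacle is establishing density of $U^{\irr}$, which requires restricting carefully to the geometrically reduced open locus before invoking \autoref{proposition:upper-semicontinuous-number-of-components}.
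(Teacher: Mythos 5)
Your proof is correct and follows essentially the same approach as the paper's: irreducibility of $\Psi$ via flatness over the base plus a Grassmannian-bundle fibration, irreducibility of $\Phi$ as its image, and equality of dimensions via generic finiteness of $\Psi\to\Phi$. The only differences are cosmetic and slightly to your credit: you handle general $m$ explicitly by fibering over $\scv^{(m)}$ rather than treating $m=1$ and declaring the rest analogous, and you are more careful to restrict to the geometrically reduced open locus before invoking \autoref{proposition:upper-semicontinuous-number-of-components}, whose hypotheses require geometrically reduced closed fibers.
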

\begin{proof}
We start by verifying that a general member of $\hilb X$ is integral if $X$ is.
The map $\uhilb X \rightarrow \hilb X$ has general member which is reduced by
\cite[Th\'eor\`eme 12.2.4(v)]{EGAIV.3}.
Therefore, applying 
~\autoref{proposition:upper-semicontinuous-number-of-components},
the general point of $\hilb X$ has preimage in $\uhilb X$ which
is integral.

We now complete the proof in the case that $m = 1$ as
the general case is completely analogous. We write
$\lambda := \lambda_1, \Lambda := \Lambda_1, p := p_1$ for notational
convenience.
Observe that we have a commutative diagram of natural projections
\begin{equation}
	\nonumber
	\begin{tikzcd} 
		\qquad & \Psi \ar {ld}{\rho_1} \ar {rd}{\rho_2} & \\
		\uhilb X \ar {rd}{\rho_3} & & \Phi \ar{ld}{\rho_4} \\
		\qquad & \hilb X &
	\end{tikzcd}\end{equation}
Observe that since the map $\rho_2$ is surjective, once
we know $\Psi$ is irreducible, $\Phi$ will be too.

Note that the map $\rho_3$ is flat. 
The assumption that the general member of $\hilb X$ is irreducible
precisely says that the general fiber of $\rho_3$ is irreducible.
If we have a flat map to an irreducible base, so that
the general fiber is irreducible, then the source is irreducible
(see, for example, \cite[Lemma 3.2.1]{landesman:undergraduate-thesis}).
Hence, $\uhilb X$ is irreducible.

If we knew $\rho_1$ were a Grassmannian bundle, we would then
obtain that $\Psi$ is also irreducible. 
To see this, we have a fiber square
\begin{equation}
	\nonumber
	\begin{tikzcd} 
	  \Psi \ar{r} \ar{d} &  \left\{ \left( \Lambda, p \right) \subset Gr(\lambda, n+1) \times \bp^n: p \in \Lambda \right\}\ar{d} \\
	  \uhilb X \ar{r} & \bp^n.
	\end{tikzcd}\end{equation}
The left vertical map is a Grassmannian bundle because
the right vertical map is a Grassmannian bundle, so $\Psi$ and $\Phi$
are irreducible.

To conclude, we check that
$\dim \Psi = \dim \Phi$.
Note that if we take the point $(Y, \Lambda)$ in $\Phi$ chosen
so that $\Lambda$ meets $Y$ at finitely many points, the fiber
of $\rho_2$ over that point is necessarily $0$ dimensional.
By upper semicontinuity of fiber dimension for proper maps, there is an open
set of $\Phi$ on which the fiber is $0$ dimensional.
Hence the map is generically finite, so $\dim \Phi = \dim \Psi$.
\end{proof}

\ssec{Tools for Showing Equality of Dimensions of the Source and Target}
\label{ssec:tools-dimensions}

In this subsection we develop some more technical tools for proving
~\autoref{theorem:equivalent-conditions-of-interpolation}.
Our goal for this subsection is to prove ~\autoref{lemma:interpolation-dimension}.
Before embarking on this task, we start with a simple tool for proving the equivalence of
\ref{interpolation-dominant} and \ref{interpolation-isolated}.

\begin{lemma}
	\label{lemma:isolated-fiber-implies-dominant}
	Let $\pi: X \ra Y$ be a proper morphism of locally Noetherian schemes of the same pure dimension. If there is some point $x \in X$ which is isolated in its fiber, then $\dim \im \pi = \dim Y$.
\end{lemma}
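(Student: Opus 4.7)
My plan is to reduce to a single irreducible component of $X$ containing $x$ and then apply the fiber dimension inequality. Since $\pi$ is proper, it is in particular of finite type and universally closed, so $\im \pi$ is a closed subset of $Y$ (which we give the reduced induced scheme structure). Let $Z$ be an irreducible component of $X$ passing through $x$; by the pure dimension hypothesis, $\dim Z = \dim Y =: d$. Because $\pi$ is proper, $\pi|_Z : Z \to Y$ is proper too, so $W := \pi(Z)$ is closed in $Y$, and $\pi|_Z$ factors as a dominant proper morphism $Z \to W$.

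The key observation is that $x$, being isolated in the full fiber $\pi^{-1}(\pi(x))$, is a fortiori isolated in the smaller set $\pi^{-1}(\pi(x)) \cap Z$, which is exactly the fiber of $\pi|_Z$ over $\pi(x)$. Hence the local fiber dimension $\dim_x (\pi|_Z)^{-1}(\pi(x))$ equals $0$. Applying the standard fiber dimension inequality (see e.g.\ \cite[II.6.3]{shafarevich:basic-algebraic-geometry-1}) to the dominant morphism $Z \to W$ of irreducible Noetherian schemes, we obtain
\[
0 \;=\; \dim_x (\pi|_Z)^{-1}(\pi(x)) \;\geq\; \dim Z - \dim W \;=\; d - \dim W,
\]
so $\dim W \geq d$.

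On the other hand $W \subseteq Y$ and $Y$ has pure dimension $d$, so $\dim W \leq d$ and therefore $\dim W = d$. Since $W \subseteq \im \pi \subseteq Y$, we conclude $\dim \im \pi = d = \dim Y$, as desired.

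The only step requiring any real input is the fiber dimension inequality applied to $\pi|_Z \colon Z \to W$; once we have that, the rest is bookkeeping. The mild subtlety to be careful about is that the fiber dimension bound must be applied after restricting to an irreducible component (so that source and target are both irreducible) and that isolation of $x$ in the full fiber passes to isolation in the restricted fiber — but both points are immediate.
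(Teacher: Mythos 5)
Your proof is correct, and it takes a genuinely different route from the paper's. The paper invokes Zariski's Main Theorem in Grothendieck's form to conclude that the locus of points isolated in their fibers is \emph{open} in $X$; since $x$ lies in this locus, it is a nonempty open subset $X_0$, and $\pi|_{X_0}$ is quasi-finite, which forces $\dim \im \pi = \dim X = \dim Y$. You instead work directly from the single point $x$: restrict to an irreducible component $Z$ through $x$, observe that $x$ remains isolated in the restricted fiber, and apply the lower bound $\dim_x (\pi|_Z)^{-1}(\pi(x)) \geq \dim Z - \dim W$ for a dominant morphism of irreducible schemes. Your approach is arguably more elementary in that it sidesteps ZMT entirely, relying only on the standard fiber-dimension inequality; the paper's approach has the advantage that it extracts slightly more information along the way (an open set on which $\pi$ is quasi-finite), which mirrors the openness intuition behind the semicontinuity arguments used elsewhere in the appendix. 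Both arguments correctly use the pure-dimension hypothesis only to ensure that the relevant irreducible component has full dimension $\dim Y$, and both use properness only to guarantee the image is closed. One small caveat worth being aware of: the Shafarevich reference you cite is phrased for varieties over a field, whereas the lemma is stated for locally Noetherian schemes; the more general form of the fiber-dimension bound (e.g.\ EGA IV 13.1.3 or the corresponding Stacks Project result) is what is strictly needed, though in every application in this paper the variety-level statement suffices.
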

\begin{proof}
By Zariski's Main Theorem in Grothendieck's form 
	\cite[Theorem 29.6.1(a)]{vakil:foundations-of-algebraic-geometry}
	there is a nonempty open subscheme $X_0 \subset X$ so that all closed point
	of $X_0$ are isolated in their fibers.
	Therefore, the map restricted to this open subset
	is generically finite, and so
	its image has the same dimension as $Y$.
\end{proof}
\begin{lemma}
	\label{lemma:interpolation-dimension}
	With notation as in ~\autoref{definition:interpolation}, 
	if $\sum_{i=1}^m \lambda_i = \dim \hilb X$, we have $\dim \Phi = \dim \prod_{i=1}^m Gr(\codim X - \lambda_i +1, n+1)$.
	In particular, the source and target of the map $\pi_2$ have the same dimension.
\end{lemma}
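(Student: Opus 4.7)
The plan is to compute $\dim \Psi$ and $\dim \prod_{i=1}^m Gr(\codim X - \lambda_i + 1, n+1)$ directly and compare them, then invoke \autoref{proposition:irreducible-and-dimension} to transfer the dimension equality from $\Psi$ to $\Phi$. Write $k = \dim X$ and $c = \codim X = n-k$. The strategy is to analyze the first projection $\eta_1 \colon \Psi \to U$; a generic fiber of $\eta_1$ over $[Y] \in U$ consists of tuples $(\Lambda_1,\ldots,\Lambda_m,p_1,\ldots,p_m)$ with $p_i \in Y \cap \Lambda_i$, and this fiber decomposes as the product $\prod_{i=1}^m F_i$, where $F_i$ parametrizes pairs $(p_i,\Lambda_i)$ with $p_i \in Y$ and $\Lambda_i$ a linear subspace of projective dimension $c-\lambda_i$ through $p_i$.

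The key computation is to identify each $F_i$ with a Grassmannian bundle over $Y$: the fiber of $F_i \to Y$ over $p_i$ is the Grassmannian of $(c-\lambda_i+1)$-dimensional subspaces of $k^{n+1}$ containing the chosen line over $p_i$, which is isomorphic to $Gr(c-\lambda_i,n)$ and thus has dimension $(c-\lambda_i)(k+\lambda_i)$. Hence $\dim F_i = k + (c-\lambda_i)(k+\lambda_i)$, and
\begin{align*}
\dim \Psi \;=\; \dim U + \sum_{i=1}^m \Bigl( k + (c-\lambda_i)(k+\lambda_i)\Bigr).
\end{align*}
On the other hand, $\dim Gr(c-\lambda_i+1,n+1) = (c-\lambda_i+1)(k+\lambda_i)$, so
\begin{align*}
\dim \prod_{i=1}^m Gr(c-\lambda_i+1,n+1) \;-\; \dim \Psi \;=\; \sum_{i=1}^m (k+\lambda_i) \;-\; \dim U \;-\; mk \;=\; \sum_{i=1}^m \lambda_i \;-\; \dim U,
\end{align*}
which vanishes under the hypothesis $\sum_i \lambda_i = \dim \hilb X = \dim U$.

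Finally, since $X$ is integral, \autoref{proposition:irreducible-and-dimension} gives $\dim \Phi = \dim \Psi$, completing the proof. I expect no substantial obstacle: the argument is essentially a bookkeeping computation of fiber dimensions. The only subtlety worth care is ensuring that $\eta_1$ has the claimed generic fiber dimension everywhere on an open dense locus of $U$ — this is fine because over the open locus where $[Y]$ corresponds to an integral $Y$ of pure dimension $k$ (which is nonempty by the irreducibility step in \autoref{proposition:irreducible-and-dimension}), the fibers of $F_i \to Y$ are honest Grassmannian bundles with constant fiber dimension, and the relevant incidence subvariety inside $Y \times Gr(c-\lambda_i+1, n+1)$ is the expected-dimensional intersection.
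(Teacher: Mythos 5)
Your proof is correct and takes essentially the same route as the paper's: both are fiber-dimension counts for the first projection to $\hilb X$. The only (cosmetic) difference is that you compute $\dim\Psi$ via $\eta_1$ and then transfer to $\Phi$ using \autoref{proposition:irreducible-and-dimension}, whereas the paper computes the fiber dimension of $\pi_1 \colon \Phi \to \hilb X$ directly — though the paper's computation implicitly passes through the same incidence variety $\Psi$, so the two are really the same argument.
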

\begin{proof}
	This is purely a dimension counting argument. With notation as in~\autoref{definition:interpolation}, $\Phi$ is a fiber product
	of incidence correspondences, $\Phi_i$, where $\Phi_i$ is the scheme theoretic image of
	\begin{align*}
		\Psi_i := \uhilb {\Lambda_i} \times_{\mathbb P^n} \scv(U)
	\end{align*}
	under projection map $\Psi_i \ra \hilb X \times Gr(\codim X - \lambda_i + 1, n+1).$
	We have natural projections
	\begin{equation}
		\nonumber
		\begin{tikzcd}
			\qquad & \Phi_i \ar {ld}{\pi^i_1} \ar {rd}{\pi^i_2} & \\
			 \hilb X && Gr(\codim X -\lambda_i + 1, n+1)
		 \end{tikzcd}\end{equation}
	with
	\begin{align*}
		\Phi = \Phi_1 \times_{\hilb X} \Phi_2 \times_{\hilb X} \cdots \times_{\hilb X} \Phi_m.
	\end{align*}
	The dimension of any fiber of $\pi_1^i$ is $\dim X + \dim Gr(\codim X-\lambda_i, n)$, and hence
	\begin{align*}
		\dim \Phi &= \dim \hilb X + \sum_{i=1}^m (\dim X + \dim Gr(\codim X-\lambda_i, n)) \\
		&= \dim \hilb X + \sum_{i=1}^m (\dim X + (\dim Gr(\codim X-\lambda_i + 1, n + 1) - n-\codim X - \lambda_i)) \\
		&= \dim \hilb X - \sum_{i=1}^m \lambda_i + \sum_{i=1}^m \dim Gr(\codim X-\lambda_i + 1, n + 1) \\
		&= \sum_{i=1}^m \dim Gr(\codim X-\lambda_i + 1, n + 1).\qedhere 
	\end{align*}
\end{proof}

\ssec{Deformation Theory Tools}
\label{ssec:tools-additional}

In this subsection, we prove a result from
deformation theory crucial to proving
the equivalence of the distinct groups of conditions in
~\autoref{theorem:equivalent-conditions-of-interpolation}.

The following proposition is important for establishing
the equivalence between interpolation of
a locally free sheaf and interpolation of a Hilbert scheme; although it might be obvious for experts, we could not find a reference, so we include it for completeness.

\begin{proposition}
	\label{proposition:tangent-space-to-psi}
	Let $\Psi, \eta_2$ and $[X] \in U$ be as in ~\autoref{definition:interpolation}
	and let 
	\begin{align*}
	  \overline p := (X, \Lambda_1, \ldots, \Lambda_m, p_1, \ldots, p_m) \in \Psi
	\end{align*}
	be a closed point of $\Psi$,
	so that $\Lambda_i$ meets $X$ quasi-transversely
	and so that the $p_i$ are distinct smooth points of $X$. Choose subspaces $V_i \subset N_{X/\bp^n}|_{p_i}$ where
$V_i$ is the image of the composition
\begin{equation}
	\nonumber
	\begin{tikzcd}
	  N_{p_i/\Lambda_i} \ar{r} & N_{p_i/\bp^n} \ar{r} & N_{X/\bp^n}|_{p_i}. 
	\end{tikzcd}\end{equation}
For any closed point $\overline q$  of $\Psi$, let \[d\eta_2|_{\overline{q}}: T_{\overline{q}} \Psi \ra T_{\eta_2(\overline q)}\prod_{i=1}^m Gr(\codim X - \lambda_i +1, n+1)\] be the
induced map on tangent spaces. Then, $d{\eta_2}|_{\overline p}$ is surjective 
if and only if the map
\begin{equation}
\nonumber
	\begin{tikzcd}
		H^0(X, N_{X/\bp^n}) \ar{r}{\tau} & H^0(X, \oplus_{i=1}^m N_{X/\bp^n}|_{p_i}/ V_i)
	\end{tikzcd}\end{equation}
is surjective.
\end{proposition}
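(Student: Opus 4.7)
The plan is to unwind $\Psi$ as a fiber product, describe $T_{\overline p}\Psi$ as the subspace of the natural direct sum of tangent spaces cut out by the incidence conditions, identify $d\eta_2|_{\overline p}$ with projection onto the Grassmannian factors, and then show directly that surjectivity of this projection is equivalent to surjectivity of $\tau$.

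To begin, I would record the standard identifications
\[
T_{[\Lambda_i]}Gr(\codim X - \lambda_i + 1, n+1) = H^0(\Lambda_i, N_{\Lambda_i/\bp^n}), \qquad T_{[X]}U = H^0(X, N_{X/\bp^n}),
\]
and $T_{p_i}\bp^n$ for the tangent space at $p_i$; the second identification uses smoothness of the Hilbert scheme at $[X]$, which holds in the cases where this proposition will be applied. Using the fiber product description $\Psi = U \times_{(\bp^n)^m} \prod_i \uhilb{\Lambda_i}$, a tangent vector $(v, w_1, \ldots, w_m, u_1, \ldots, u_m)$ lies in $T_{\overline p}\Psi$ if and only if for each $i$, the image of $u_i$ in $N_{X/\bp^n}|_{p_i}$ equals $v(p_i)$ and the image of $u_i$ in $N_{\Lambda_i/\bp^n}|_{p_i}$ equals $w_i(p_i)$. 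The differential $d\eta_2|_{\overline p}$ is simply projection onto $(w_1, \ldots, w_m)$.

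Fixing a target vector $(w_1, \ldots, w_m)$, a lift requires choosing $u_i \in T_{p_i}\bp^n$ mapping to $w_i(p_i) \in N_{\Lambda_i/\bp^n}|_{p_i}$. Such $u_i$'s form an affine space modeled on $T_{p_i}\Lambda_i$, so the image of $u_i$ in $N_{X/\bp^n}|_{p_i}$ is well-defined only modulo the image of $T_{p_i}\Lambda_i$ under $T_{p_i}\bp^n \to N_{X/\bp^n}|_{p_i}$, which is exactly the subspace $V_i$ from the statement. Denoting this well-defined class by $c_i(w_i) \in N_{X/\bp^n}|_{p_i}/V_i$, the required $v$ exists if and only if $\tau(v) = (c_1(w_1), \ldots, c_m(w_m))$, so $d\eta_2|_{\overline p}$ is surjective iff the image of $\tau$ contains every tuple of the form $(c_1(w_1), \ldots, c_m(w_m))$.

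The final step is to verify that the assignment $(w_1, \ldots, w_m) \mapsto (c_1(w_1), \ldots, c_m(w_m))$ surjects onto $\bigoplus_i N_{X/\bp^n}|_{p_i}/V_i$, which immediately yields the claimed equivalence with surjectivity of $\tau$. This reduces factor-by-factor to showing that $H^0(\Lambda_i, N_{\Lambda_i/\bp^n}) \to N_{X/\bp^n}|_{p_i}/V_i$ is surjective, and this composition factors as the evaluation map $H^0(\Lambda_i, N_{\Lambda_i/\bp^n}) \twoheadrightarrow N_{\Lambda_i/\bp^n}|_{p_i}$ (surjective because $N_{\Lambda_i/\bp^n}$ is a direct sum of copies of $\sco(1)$ on the projective space $\Lambda_i$) followed by the quotient $N_{\Lambda_i/\bp^n}|_{p_i} \twoheadrightarrow T_{p_i}\bp^n/(T_{p_i}X + T_{p_i}\Lambda_i) = N_{X/\bp^n}|_{p_i}/V_i$, which is surjective by inspection. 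The main point to be careful about is that quasi-transversality of $\Lambda_i$ with $X$ at the smooth point $p_i$ makes the incidence equations cutting out $T_{\overline p}\Psi$ independent, so that the description of $T_{\overline p}\Psi$ above really is an equality (not just an inclusion) and $V_i$ has the expected codimension $\lambda_i$; this is exactly where the hypotheses on $\overline p$ enter.
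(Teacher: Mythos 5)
Your proposal is correct and follows essentially the same route as the paper: both reduce $d\eta_2|_{\overline p}$ to the map $\tau$ by computing $T_{\overline p}\Psi$ as a fiber product of tangent spaces (universal family over Hilbert scheme, universal families over Grassmannians, glued along $T_{p_i}\bp^n$), using the standard identifications $T_{[X]}\sch \cong H^0(N_{X/\bp^n})$, $T_{[\Lambda_i]}Gr \cong H^0(N_{\Lambda_i/\bp^n})$, and observing that the image of $T_{p_i}\Lambda_i$ in $N_{X/\bp^n}|_{p_i}$ is $V_i$. The paper packages this as a diagram chase through a large commutative square of fiber products (passing through an auxiliary correspondence $\scf$ built over the full Hilbert scheme $\sch$ rather than the component $\hilb X$, and then arguing $\scf$ and $\Psi$ agree near $\overline p$ by smoothness), while you work directly element-by-element and handle the $U$-versus-$\sch$ issue simply by invoking smoothness of the Hilbert scheme at $[X]$; your final surjectivity check for $c_i$ via globally generatedness of $N_{\Lambda_i/\bp^n}$ is the same ingredient the paper uses implicitly to deduce that the bottom row of its diagram computes $\tau$ onto the full target. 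No gaps.
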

\begin{proof}
To set things up properly, we will need some definitions.
Recall that $\uhilb {\Lambda_i}$ is the universal family over the Hilbert scheme of $\dim \Lambda_i$ planes in $\bp^n$.
That is, it is the universal family over $Gr(\codim X - \lambda_i + 1, n+1)$.
Next, take $\sch$ to be the Hilbert
scheme with Hilbert polynomial equal to that of $X$ and let $\scv$ be the universal family over $\sch$.
Next, define the scheme
\begin{align*}
	\scf &:= (\sch \times_{\bp^n} \uhilb {\Lambda_1}) \times_\scv \cdots \times_\scv (\sch \times_{\bp^n} \uhilb {\Lambda_m}) \\
	& \cong (\scu \times_\sch \cdots \times_\sch \scu) \times_{(\bp^n)^m} (\uhilb {\Lambda_1} \times \cdots \times \uhilb {\Lambda_m}),
\end{align*}
where there are $m$ copies of $\scu$ in the first parenthesized expression on the second line.

Note that here $\scf$ is not necessarily the same as $\Psi$ because we need not have $\sch = \hilb X$:
The former is the connected component of the Hilbert scheme containing $X$ while $\hilb X$ is the
irreducible component of the Hilbert scheme containing $X$.
However, we will later explain why the tangent spaces of these two schemes are identical, which
is enough for our purposes.

Now, under our assumption that $p_1, \ldots, p_n$ are distinct, we have a diagram

\begin{equation}
\label{equation:three-by-three-deformation-theory}
\hspace*{-1cm}
\includegraphics[scale=.9]{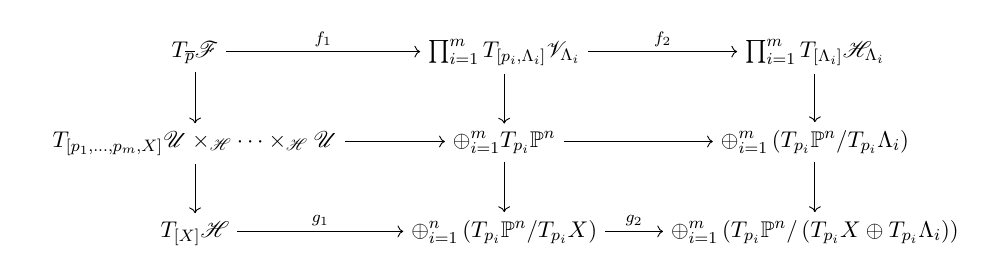}
\end{equation}
\noindent	
in which every square is a fiber square.

First, let us justify why the four small squares of \eqref{equation:three-by-three-deformation-theory} are fiber squares.
The lower right hand square of \eqref{equation:three-by-three-deformation-theory} is a fiber square by elementary linear algebra and the assumption that
$\Lambda_i$ meet $X$ quasi-transversely.
The upper right square of \eqref{equation:three-by-three-deformation-theory} is a fiber square for each $i$
by \cite[Remark 4.5.4(ii)]{sernesi:deformations-of-algebraic-schemes},
as the universal family over the Hilbert scheme is precisely the Hilbert flag scheme of points inside that Hilbert scheme.
Next, the lower left hand square of \eqref{equation:three-by-three-deformation-theory} is a fiber
square because when the points $p_1, \ldots, p_n$ are distinct, the tangent space to this
$n$-fold fiber product of universal families over the Hilbert scheme is the same as the tangent space
to the Hilbert flag scheme of degree $n$ schemes inside schemes with the same Hilbert polynomials as $X$.
Then, the fiber square follows from \cite[Remark 4.5.4(ii)]{sernesi:deformations-of-algebraic-schemes} for this flag Hilbert scheme.
Finally, the upper left square of \eqref{equation:three-by-three-deformation-theory} is a fiber square
because $\scf$ is defined as a fiber product of
$(\scu \times_\sch \cdots \times_\sch \scu)$ and $(\uhilb {\Lambda_1} \times \cdots \times \uhilb {\Lambda_m})$,
and the fiber product of the tangent spaces is the tangent space of the fiber product.

Now, observe that the composition $f_2 \circ f_1$ is precisely the map on tangent spaces $d\eta_2|_{\overline{p}}$.
To make this identification, we need to know that we can naturally identify
$T_{\overline p} \scf \cong T_{\overline p}\Psi$.
However, the assumptions that $H^1(X, N_{X/\bp^n}) = 0$
and that $X$ is lci imply that $[X]$ is a smooth point of the Hilbert scheme.
Because the fiber over $[X]$ of the projection $\Psi \ra \hilb X$ is smooth, it follows that $\Psi$ is smooth at $\overline p$.
For the same reason, it follows that $\scf$ is smooth at the corresponding point $\overline p$.
Therefore, both $\scf$ and $\Psi$ are smooth on some open neighborhood $U$ containing
$\overline p$.
Now, since both $\Psi$ and $\scf$ are defined in terms of fiber products, which agree on some open neighborhood $V$
contained in $U$, it follows that on $V$ we have an isomorphism $\scf|_V \cong \Psi|_V$, and in particular their
tangent spaces are isomorphic. So, we can identify $f_2 \circ f_1$ with $d\eta_2|_{\overline{p}}$.

Since all four subsquares of \eqref{equation:three-by-three-deformation-theory}
are fiber squares, the full square \eqref{equation:three-by-three-deformation-theory} is a fiber square,
and hence $f_2 \circ f_1$ is an isomorphism if and only if $g_2 \circ g_1$ is an isomorphism.

To complete the proof, we only need identify the map $g_2 \circ g_1$ with $\tau$.
But this follows from the identifications
\begin{align*}
	T_{[X]} \sch &\cong H^0(X, N_{X/\bp^n}) \\
	T_{\bp_i} \bp^n/T_{p_i} X &\cong H^0(X, N_{X/\bp^n}|_{p_i}) \\
	T_{\bp_i} \bp^n/\left(T_{p_i} X \oplus T_{p_i} Y  \right) &\cong H^0(X, N_{X/\bp^n}|_{p_i}/V_i).
\end{align*}
The first isomorphism follows from \cite[Theorem 1.1(b)]{Hartshorne:deformation}.
The second isomorphism holds because the normal exact sequence
\begin{equation}
\nonumber
	\begin{tikzcd}
	  0 \ar{r} &  T_{p_i}X \ar{r} & T_{p_i} \bp^n \ar{r} & N_{X/\bp^n}|_{p_i} \ar{r} & 0 
	\end{tikzcd}\end{equation}
is exact on global sections, as all sheaves are supported at $p_i$.
The third isomorphism holds because
$\left( T_{p_i} \bp^n / \left( T_{p_i} X \oplus T_{p_i} \Lambda_i \right) \right)$ can be viewed as the
quotient of $T_{p_i} \bp^n$ first by $T_{p_i} X$ and then by the image of $T_{p_i} \Lambda_i$ in that quotient.
However, $T_{p_i} \bp^n / T_{p_i} X \cong N_{X/\bp^n}|_{p_i}$, and then
$V_i$ is by definition the image of $T_{p_i} \Lambda_i$ in $N_{X/\bp^n}|_{p_i}$.
\end{proof}

\ssec{Proof of ~\autoref{theorem:equivalent-conditions-of-interpolation}}
\label{ssec:proof-of-equivalent-conditions-of-interpolation}

\begin{proof}[Proof of ~\autoref{theorem:equivalent-conditions-of-interpolation}]

The structure of proof is as follows:
\begin{enumerate}
	\item Show equivalence of conditions \ref{interpolation-definition}-\ref{interpolation-sweep}
	\item Show equivalence of conditions \ref{strong-definition}-\ref{strong-naive-equality}
	\item Show equivalence of conditions \ref{cohomological-definition}-\ref{cohomological-boundary}
	\item Demonstrate the implications that \ref{cohomological-definition}-\ref{cohomological-boundary} imply \ref{interpolation-definition}-\ref{interpolation-sweep},  \ref{cohomological-definition}-\ref{cohomological-boundary} imply
\ref{strong-definition}-\ref{strong-naive-equality}, and
\ref{strong-definition}-\ref{strong-naive-equality}
imply
\ref{interpolation-definition}-\ref{interpolation-sweep}, in all characteristics. Further, all statements are equivalent in characteristic $0$.
\end{enumerate}
\sssec{Equivalence of Conditions \ref{interpolation-definition}-\ref{interpolation-sweep}}

First, \ref{interpolation-definition} and
\ref{interpolation-pointed} are equivalent as mentioned in~\autoref{remark:pointed-interpolation-equivalent-to-non-pointed},
applied to the case $\lambda = ((\codim X)^q, r)$.

Next, note that a proper map of irreducible schemes of the same dimension is
surjective if and only if it is dominant if and only if it is generically
finite if and only if there is some point isolated in its fiber.
The first three equivalences are immediate, the last follows from
~\autoref{lemma:isolated-fiber-implies-dominant}.
Since $\dim \prod_{i=1}^m Gr(\codim X - \lambda_i +1, n+1) = \dim \Phi$,
by ~\autoref{lemma:interpolation-dimension},
we have that \ref{interpolation-definition}, \ref{interpolation-dominant},
\ref{interpolation-finite}, \ref{interpolation-isolated} are equivalent.

Next, since $\dim \Phi = \dim \Psi$, and $\Psi$ is irreducible, 
we have
$\dim \prod_{i=1}^m Gr(\codim X - \lambda_i +1, n+1) = \dim \Psi$.
So, by reasoning analogous to that of the previous paragraph, we obtain that \ref{interpolation-pointed}, \ref{interpolation-pointed-dominant}, \ref{interpolation-pointed-finite}, and \ref{interpolation-pointed-isolated} are equivalent.

Next, \ref{interpolation-definition} is equivalent to
\ref{interpolation-naive} because surjectivity of a proper map
of varieties is equivalent to surjectivity on closed points of the varieties.
Since the fibers of the map $\pi_2$ precisely consist of those
elements of $\hilb X$ meeting a specified collection of $q$ points
and a plane $\Lambda$, being surjective is equivalent to there
being some element of $\hilb X$ passing through these $q$ points and meeting $\Lambda$.

Finally, \ref{interpolation-naive} is equivalent to \ref{interpolation-sweep}
because the condition that the variety swept out by the elements of $\hilb X$
containing $q$ points
meet a general plane $\Lambda$ of dimension $\codim X - r$ is equivalent
to the variety swept out by the elements of $\hilb X$ being
$\dim X + r$ dimensional.
This is using the fact that a variety of dimension $d$ in $\bp^n$
meets a general plane of dimension $d'$ if and only if $d + d' \geq n$.
But, of course, the dimension swept out by the elements of $\hilb X$
containing $q$ general points is at most $\dim X + r$ dimensional, because
there is at most an $r$ dimensional space of varieties in $\hilb X$
containing $r$ general points.
This shows the equivalence of properties \ref{interpolation-definition}
through \ref{interpolation-sweep}.

\sssec{Equivalence of Conditions \ref{strong-definition}-\ref{strong-naive-equality}
}

Since $\eta_2$ factors through $\Phi$, and the restriction map
$\Psi \rightarrow \Phi$ is surjective, for all $\lambda$ with $\sum_{i=1}^m \lambda_i = \dim \hilb X$, $\lambda$-interpolation is equivalent
to $\lambda$-pointed interpolation. This establishes the equivalence
of \ref{strong-equality} and \ref{strong-pointed-equality}
and the equivalence of \ref{strong-definition} and \ref{strong-pointed}.

Next, \ref{strong-definition} is equivalent to \ref{strong-naive},
because the map $\pi_2$ contains a point corresponding to a
collection of planes $\Lambda_1, \ldots, \Lambda_m$ in its image
if and only if there is some element of the Hilbert
schemes meeting those planes. Similarly,
\ref{strong-equality} is equivalent to \ref{strong-naive-equality}.

To complete these equivalences, we only need show \ref{strong-naive} is equivalent to \ref{strong-naive-equality}. Clearly \ref{strong-naive} implies \ref{strong-naive-equality}. For the reverse implication, observe that if we start with a collection
of planes $\Lambda_1, \ldots, \Lambda_s$ with $\Lambda_i \in Gr(\codim X - \lambda_i  + 1, n+1)$, so that $\sum_{i=1}^s \lambda_i < \dim \hilb X$, we can extend the sequence $\lambda$ to a sequence $\mu = \left( \mu_1, \ldots, \mu_m \right)$
for $m > s$,
with
$0 \leq \mu_i \leq \codim X$, 
$\mu_i = \lambda_i$ for $i \leq s$, and $\sum_{i=1}^m \mu_i = \dim \hilb X$. Then, if some element of $\hilb X$ meets planes $\Lambda_1, \ldots, \Lambda_m$ corresponding to the sequence $\mu$, it certainly
also meets $\Lambda_1, \ldots, \Lambda_s$. Hence, \ref{strong-naive-equality} implies \ref{strong-naive}.

\sssec{Equivalence of Conditions \ref{cohomological-definition}-\ref{cohomological-boundary}}

The equivalence of \ref{cohomological-definition}
and \ref{cohomological-restatement} is immediate from 
the definitions.
The equivalence of \ref{cohomological-sections}
and \ref{cohomological-vanish}
is a generalization of \cite[Proposition 4.5]{atanasovLY:interpolation-for-normal-bundles-of-general-curves}
to higher dimensional varieties, and the
equivalence of \ref{cohomological-vanish} and
\ref{cohomological-boundary}
is a generalization of \cite[Proposition 4.6]{atanasovLY:interpolation-for-normal-bundles-of-general-curves}
to higher dimensional varieties.
The equivalence of \ref{cohomological-definition}
and \ref{cohomological-strong} is an immediate
generalization of \cite[Theorem 8.1]{atanasov:interpolation-and-vector-bundles-on-curves} to higher dimensional varieties.
To complete the proof, we only need check the equivalence of
Definition \ref{cohomological-definition}
and \ref{cohomological-vanish}.
The forward implication follows immediately from a
couple standard applications of exact sequences,
so we concentrate on the reverse implication.
This essentially follows from a generalization of
\cite[Proposition 4.23]{atanasovLY:interpolation-for-normal-bundles-of-general-curves}, with one minor issue:
We need to check that if we start with a sequence of
	sheaves
	\begin{equation}
		\nonumber
		\begin{tikzcd}
		  0 \ar{r} & F \ar{r} & E \ar{r} & A \ar{r} & 0 
		\end{tikzcd}\end{equation}
	where $A$ has zero dimensional support, then for a general
	collection of points $p_1, \ldots, p_d$ the twisted
	sequence
	\begin{equation}
		\nonumber
		\begin{tikzcd}
		  0 \ar{r} & F \otimes \sci_{p_1, \ldots, p_d} \ar{r} & E \otimes \sci_{p_1, \ldots, p_d}\ar{r} & A \otimes \sci_{p_1, \ldots, p_d}\ar{r} & 0 
		\end{tikzcd}\end{equation}
	remains exact. This held automatically in the case of
\cite[Proposition 4.23]{atanasovLY:interpolation-for-normal-bundles-of-general-curves}, because they were only dealing with the case that the
points were divisors, and hence the ideal sheaves were locally free.
However, here, the resulting sequence is still exact, since
$\stor^1(\sci_{p_1, \ldots, p_d}, A) = 0$,
so long as the points $p_1, \ldots, p_d$ are chosen
to be disjoint from the support of $A$.
We apply this generalization of \cite[Proposition 4.23]{atanasovLY:interpolation-for-normal-bundles-of-general-curves},
and, in that statement,
take $B := Gr(r, \rk N_{X/\bp^n}), E := N_{X/\bp^n}, F := N_{X/\bp^n} \otimes \sci_p, G_b := N_{X/\bp^n}|_p/V_b$,
where 
$V_b$ is the subspace for the corresponding element of $b \in B$.
We then see that all twists of $G$ by the ideal sheaf of a general
set of points either have vanishing $0$ or $1$st cohomology,
implying that $N_{X/\bp^n}$ satisfies interpolation, as in
\ref{cohomological-definition}.

\sssec{Implications Among all Conditions}

By definition \ref{strong-definition} implies \ref{interpolation-definition}.

To complete the proof, we only need to show \ref{cohomological-definition}
implies \ref{strong-pointed} and \ref{interpolation-pointed} (in all characteristics)
and that the reverse implications hold true in characteristic $0$.

For this, choose $\lambda$ with $\sum_{i=1}^m \lambda_i = \dim \hilb X$. We will show that $\lambda$-interpolation of $N_{X/\bp^n}$ implies $\lambda$-pointed
interpolation in all characteristics, and the reverse
implication holds in characteristic $0$.
It suffices to prove this, as this will yield the desired implications.
For example, this implies the relation between \ref{interpolation-pointed} and
\ref{cohomological-definition}, by taking $\lambda = \left( (\codim X)^q, r \right)$.

To see this statement about $\lambda$-pointed interpolation and $\lambda$-interpolation of $N_{X/\bp^n}$,
let $\overline p := (Y, \Lambda_1, \ldots, \Lambda_m, p_1, \ldots, p_m), V_i, \tau$ be as in
~\autoref{proposition:tangent-space-to-psi}.

By \autoref{proposition:tangent-space-to-psi}, we have that the map
$d\eta_2|_{\overline{p}}$ is surjective if and only if
the corresponding map $\tau$
is surjective. But this latter map is precisely that from ~\eqref{equation:sequence-cohomological-interpolation-definition}
in the definition of interpolation for vector bundles,
taking $E := N_{X/\bp^n}$.

So, to complete the proof, it suffices to show that if $d\eta_2|_{\overline{p}}$
is surjective, then $\eta_2$ is surjective, and the converse holds in characteristic $0$.

But now we have reduced this to a general statement about varieties.
Note that $\eta_2$ is a map between two varieties of the same dimension, 
by \autoref{lemma:interpolation-dimension} and that $\overline p$
is a smooth point of $\Psi$ by assumption.
So, it suffices to show that a map between two proper varieties of the same
dimension is surjective if it is surjective on tangent spaces,
and that the converse holds in characteristic $0$.
For the forward implication,
if the map is surjective on tangent spaces,
the map is smooth of
relative dimension $0$ at $\overline p$.
But, this means that $\overline p$ is isolated in its fiber, and so by \autoref{lemma:isolated-fiber-implies-dominant},
we obtain that $\eta_2$ is surjective.

To complete the proof, we only need to show that if $\eta_2$ is surjective and $k$ has characteristic
$0$, then there is a point at which $d\eta_2|_{\overline{p}}$ is surjective.
That is, we only need to show there is a point at which $\eta_2$ is smooth.
But, this follows by generic smoothness, which crucially uses the characteristic $0$ hypothesis!
\end{proof}

\subsection{Complete intersections}

\begin{definition}
	\label{definition:}
	Define $\ci k d n$ to be the closure in the 
	Hilbert scheme of the locus of complete intersections
	of $k$ polynomials of degree $d$ in $\bp^n$.
\end{definition}

\begin{lemma}
	\label{lemma:balanced-complete-intersection}
	Let $k, d, n$ be positive integers.
	Then, $\ci k d n$ satisfies interpolation.
	In particular, any Hilbert scheme of hypersurfaces
	$\ci 1 d n$
	satisfies interpolation.
	Furthermore, interpolation
	is equivalent to meeting $\binom{d+n}{d}-k$
	general points in $\bp^n$.
\end{lemma}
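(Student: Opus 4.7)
The plan is to identify $\ci{k}{d}{n}$ birationally with a Grassmannian, use \autoref{theorem:equivalent-conditions-of-interpolation} to reduce interpolation to passing through $\binom{d+n}{d}-k$ general points, and verify this by producing an isolated point in a fiber of the incidence correspondence.

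First I would compute $\dim \ci{k}{d}{n} = k\bigl(\binom{d+n}{d}-k\bigr)$ by exhibiting the birational map
\[
\mathrm{Gr}\bigl(k, \tbinom{d+n}{d}\bigr) \dashrightarrow \ci{k}{d}{n}, \qquad V \mapsto V(V),
\]
where a general $k$-dimensional subspace $V \subset H^0(\bp^n,\mathcal{O}(d))$ cuts out a codimension-$k$ complete intersection, recovered from its space of degree-$d$ defining forms (the Koszul resolution gives $h^0(I_X(d)) = k$, so no extra generators appear in degree $d$). Since each member has codimension $k$, writing $\dim \ci{k}{d}{n} = qk + r$ with $0\le r < k$ forces $q = \binom{d+n}{d}-k$ and $r=0$. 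By equivalence \ref{interpolation-naive} of \autoref{theorem:equivalent-conditions-of-interpolation}, interpolation for $\ci{k}{d}{n}$ is then equivalent to the existence of a member through $q$ general points of $\bp^n$, which is precisely the final assertion of the lemma.

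To establish this existence, I would appeal to equivalence \ref{interpolation-isolated} of \autoref{theorem:equivalent-conditions-of-interpolation} and exhibit an isolated point in the fiber of $\pi_2$. Fix any smooth $X = V(f_1,\ldots,f_k) \in \ci{k}{d}{n}$ and pick $q$ general points $p_1,\ldots,p_q$ on $X$. The essential input is that such $p_i$ impose independent conditions on degree $d$ forms on $\bp^n$, i.e., the evaluation map
\[
\mathrm{ev}\colon H^0(\bp^n,\mathcal{O}(d)) \longrightarrow \bigoplus_{i=1}^{q} \mathcal{O}(d)|_{p_i}
\]
is surjective. This follows from the very ampleness of $\mathcal{O}(d)$: factor $\mathrm{ev}$ through the surjection $H^0(\bp^n,\mathcal{O}(d)) \twoheadrightarrow H^0(X,\mathcal{O}_X(d))$ and use that $q = h^0(X,\mathcal{O}_X(d))$ general points on $X$ impose independent conditions by the standard inductive argument using very ampleness of $\mathcal{O}_X(d)$. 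Hence $V := \ker(\mathrm{ev})$ has dimension exactly $k$, and since it contains the $k$-dimensional subspace $\langle f_1,\ldots,f_k\rangle$, the two coincide.

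Finally, any other complete intersection $X' = V(g_1,\ldots,g_k) \in \ci{k}{d}{n}$ through the $p_i$ has $\langle g_1,\ldots,g_k\rangle \subset V$, and by linear independence $\langle g_1,\ldots,g_k\rangle = V = \langle f_1,\ldots,f_k\rangle$; since the saturated ideal of a codimension-$k$ complete intersection of $k$ degree-$d$ hypersurfaces is generated in degree $d$, this forces $X' = X$. Hence the fiber over $(p_1,\ldots,p_q)$ consists only of $X$, which is therefore isolated. The main (and only genuinely nontrivial) obstacle is the surjectivity of $\mathrm{ev}$, but this is a classical consequence of very ampleness.
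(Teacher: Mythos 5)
Your proof is correct and, while it shares the opening moves with the paper (the birational identification of $\ci{k}{d}{n}$ with $\mathrm{Gr}(k,\binom{d+n}{d})$, the dimension count, and the reduction via \autoref{theorem:equivalent-conditions-of-interpolation} to passing through $q = \binom{d+n}{d}-k$ points), your endgame is genuinely different and cleaner. The paper produces a $k$-dimensional space of degree-$d$ forms through $q$ general points of $\bp^n$ and then asserts that this space is a \emph{general} element of the Grassmannian because ``$\pi_2$ is a generically finite map between varieties of the same dimension'' --- but generic finiteness of $\pi_2$ is essentially the statement being proved, so as written this step is close to circular. You instead \emph{start} from a smooth complete intersection $X$, choose $q$ general points on $X$, use projective normality ($h^1(I_X(d))=0$) and the Koszul resolution ($h^0(I_X(d))=k$) to get $h^0(\mathcal{O}_X(d)) = q$, and verify that these points impose independent conditions so that $\ker(\mathrm{ev}) = (I_X)_d$. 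This produces an explicit isolated point of the incidence correspondence and invokes criterion \ref{interpolation-isolated} of \autoref{theorem:equivalent-conditions-of-interpolation}, supplying the justification the paper's argument gestures at but does not spell out.

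One small imprecision: your last sentence concludes ``the fiber over $(p_1,\ldots,p_q)$ consists only of $X$,'' but you have only considered members $X'$ of $\ci{k}{d}{n}$ that are genuine complete intersections, not points of the boundary of the closure. This does not affect the conclusion --- since the locus of genuine complete intersections is open and dense in $\ci{k}{d}{n}$, showing that $X$ is the unique genuine CI in the fiber already makes $X$ isolated, which is all that criterion \ref{interpolation-isolated} requires. If you want the stronger claim that the fiber is literally $\{X\}$, note that any $Y \in \ci{k}{d}{n}$ through the $p_i$ satisfies $\dim (I_Y)_d \ge k$ by upper semicontinuity, while $(I_Y)_d \subset \ker(\mathrm{ev}) = V$, forcing $(I_Y)_d = V$; then $Y \subset V(V) = X$ with equal Hilbert polynomials forces $Y = X$. (A further small point: the appeal to very ampleness in proving independence of point conditions is more than is needed --- on an irreducible variety, any $m \le h^0(L)$ general points impose independent conditions on $L$, by the usual base-locus-avoidance induction.)
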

\begin{proof}
	First, observe that $\dim \ci k d n = k(\binom{d+ n}{d} - k)$
	because a point of $\ci k d n$ corresponds to the
	variety cut out by the intersection of all degree $d$ polynomials
	in a k dimensional subspace of $H^0(\bp^n, \sco_{\bp^n}(d))$.
	In other words, there is a birational map between
	the locus of complete intersections and $G(k, H^0(\bp^n, \sco_{\bp^n}(d)))$,
	which is $k(\binom{d+n}{d}-k)$ dimensional.
	So, to show $\ci k d n$ satisfies interpolation, it suffices
	to show there exists such a complete intersection through
	$\binom{d + n}{d} - k$ general points.
	First, since points impose independent conditions on
	degree $d$ hypersurfaces in $\bp^n$, there will indeed be a $k$
	dimensional subspace of $H^0(\bp^n, \sco_{\bp^n}(d))$
	passing through the any collection
	of $\binom{d+n}{d} - k$ points.

	It remains to verify that if the points are chosen
	generally, then the intersection of degree
	$d$ hypersurfaces in the subspace passing through
	the points is a complete intersection.
	To see this, note that the map $\pi_2$
	from \autoref{definition:interpolation}
	is a generically finite map between varieties of
	the same dimension. In particular,
	the element of	
	$G(k, H^0(\bp^n, \sco_{\bp^n}(d)))$
	through a general collection of
	$\binom{d+n}{d}-k$ points
	will be general in
	$G(k, H^0(\bp^n, \sco_{\bp^n}(d)))$.
	Then, since a general element of 
	$G(k, H^0(\bp^n, \sco_{\bp^n}(d)))$
	corresponds to a complete intersection,
	there will indeed be a complete intersection
	passing through a general collection of
	$\binom{d+n}{d}-k$ points.
\end{proof}

\bibliography{master}

\def\cprime{$'$}
\begin{thebibliography}{CFM13}

\bibitem[AH81]{arbarelloH:canonical-curves-and-quadrics-of-rank-4}
Enrico Arbarello and Joseph Harris.
\newblock Canonical curves and quadrics of rank {$4$}.
\newblock {\em Compositio Math.}, 43(2):145--179, 1981.

\bibitem[ALY19]{atanasovLY:interpolation-for-normal-bundles-of-general-curves}
Atanas Atanasov, Eric Larson, and David Yang.
\newblock Interpolation for normal bundles of general curves.
\newblock {\em Mem. Amer. Math. Soc.}, 257(1234):v+105, 2019.

\bibitem[Ata14]{atanasov:interpolation-and-vector-bundles-on-curves}
Atanas Atanasov.
\newblock Interpolation and vector bundles on curves.
\newblock {\em arXiv preprint arXiv:1404.4892v2}, 2014.

\bibitem[Bal17]{ballico2014interpolation}
E.~Ballico.
\newblock An interpolation problem for the normal bundle of curves of genus
  {$g\geq 2$} and high degree in {$\Bbb P^r$}.
\newblock {\em Comm. Algebra}, 45(2):822--827, 2017.

\bibitem[CFM13]{chenFM:effective-divisors-on-moduli-spaces-of-curves-and-abelian-varieties}
Dawei Chen, Gavril Farkas, and Ian Morrison.
\newblock Effective divisors on moduli spaces of curves and abelian varieties.
\newblock 18:131--169, 2013.

\bibitem[Cob22]{coble:associated-sets-of-points}
Arthur~B. Coble.
\newblock Associated sets of points.
\newblock {\em Trans. Amer. Math. Soc.}, 24(1):1--20, 1922.

\bibitem[Cos06a]{coskun:degenerations-of-surface-scrolls}
Izzet Coskun.
\newblock Degenerations of surface scrolls and the {G}romov-{W}itten invariants
  of {G}rassmannians.
\newblock {\em J. Algebraic Geom.}, 15(2):223--284, 2006.

\bibitem[Cos06b]{coskun:the-enumerative-geometry-of-del-Pezzo-surfaces-via-degenerations}
Izzet Coskun.
\newblock The enumerative geometry of {D}el {P}ezzo surfaces via degenerations.
\newblock {\em Amer. J. Math.}, 128(3):751--786, 2006.

\bibitem[Dol04]{dolgachev:on-certain-families-of-elliptic-curves-in-projective-space}
Igor~V. Dolgachev.
\newblock On certain families of elliptic curves in projective space.
\newblock {\em Ann. Mat. Pura Appl. (4)}, 183(3):317--331, 2004.

\bibitem[EH87]{eisenbudH:on-varieties-of-minimal-degree}
David Eisenbud and Joe Harris.
\newblock On varieties of minimal degree (a centennial account).
\newblock In {\em Algebraic geometry, {B}owdoin, 1985 ({B}runswick, {M}aine,
  1985)}, volume~46 of {\em Proc. Sympos. Pure Math.}, pages 3--13. Amer. Math.
  Soc., Providence, RI, 1987.

\bibitem[EL92]{einL:stability-and-restrictions-of-picard-bundles}
Lawrence Ein and Robert Lazarsfeld.
\newblock Stability and restrictions of {P}icard bundles, with an application
  to the normal bundles of elliptic curves.
\newblock In {\em Complex projective geometry ({T}rieste, 1989/{B}ergen,
  1989)}, volume 179 of {\em London Math. Soc. Lecture Note Ser.}, pages
  149--156. Cambridge Univ. Press, Cambridge, 1992.

\bibitem[ELB89]{elencwajg:l-anneau-de-chow-des-triangles-du-plan}
G.~Elencwajg and P.~Le~Barz.
\newblock L' anneau de chow des triangles du plan.
\newblock {\em Compositio Mathematica}, 71(1):85--119, 1989.

\bibitem[EP00]{eisenbudP:the-projective-geometry-of-the-gale-transform}
David Eisenbud and Sorin Popescu.
\newblock The projective geometry of the {G}ale transform.
\newblock {\em J. Algebra}, 230(1):127--173, 2000.

\bibitem[Gro65]{EGAIV.2}
A.~Grothendieck.
\newblock \'{E}l\'ements de g\'eom\'etrie alg\'ebrique. {IV}. \'{E}tude locale
  des sch\'emas et des morphismes de sch\'emas. {II}.
\newblock {\em Inst. Hautes \'Etudes Sci. Publ. Math.}, (24):231, 1965.

\bibitem[Gro66]{EGAIV.3}
A.~Grothendieck.
\newblock \'{E}l\'ements de g\'eom\'etrie alg\'ebrique. {IV}. \'{E}tude locale
  des sch\'emas et des morphismes de sch\'emas. {III}.
\newblock {\em Inst. Hautes \'Etudes Sci. Publ. Math.}, page 255, 1966.

\bibitem[Har10]{Hartshorne:deformation}
Robin Hartshorne.
\newblock {\em Deformation theory}, volume 257 of {\em Graduate Texts in
  Mathematics}.
\newblock Springer, New York, 2010.

\bibitem[hl]{MO:why-is-the-number-of-irreducible-components-upper-semicontinuous-in-nice-situations}
Aaron~Landesman (http://mathoverflow.net/users/75970/aaron landesman).
\newblock Why is the number of irreducible components upper semicontinuous in
  nice situations?
\newblock MathOverflow.
\newblock URL:http://mathoverflow.net/q/225534 (version: 2015-12-08).

\bibitem[HP95]{harrisP:severi-degrees-in-cogenus-3}
Joseph Harris and Rahul Pandharipande.
\newblock Severi degrees in cogenus 3.
\newblock {\em arXiv preprint arXiv:alg-geom/9504003v1}, 1995.

\bibitem[IP99]{iskovskikhP:fano-varieties}
V.~A. Iskovskikh and Yu.~G. Prokhorov.
\newblock Fano varieties.
\newblock In {\em Algebraic geometry, {V}}, volume~47 of {\em Encyclopaedia
  Math. Sci.}, pages 1--247. Springer, Berlin, 1999.

\bibitem[Lan18]{landesman:interpolation-of-varieties-of-minimal-degree}
Aaron Landesman.
\newblock Interpolation of varieties of minimal degree.
\newblock {\em Int. Math. Res. Not. IMRN}, (13):4063--4083, 2018.

\bibitem[Lar16]{larson:interpolation-for-restricted-tangent-bundles-of-general-curves}
Eric Larson.
\newblock Interpolation for restricted tangent bundles of general curves.
\newblock {\em Algebra Number Theory}, 10(4):931--938, 2016.

\bibitem[LP16]{landesman:undergraduate-thesis}
Aaron Landesman and Anand Patel.
\newblock Interpolation in algebraic geometry.
\newblock {\em arXiv preprint arXiv:1605.01117v1}, 2016.

\bibitem[PP15]{pereiraP:an-invitation-to-web-geometry}
Jorge~Vit{\'o}rio Pereira and Luc Pirio.
\newblock {\em An invitation to web geometry}, volume~2 of {\em IMPA
  Monographs}.
\newblock Springer, Cham, 2015.

\bibitem[Ran07]{ran:normal-bundles-of-rational-curves-in-projective-spaces}
Ziv Ran.
\newblock Normal bundles of rational curves in projective spaces.
\newblock {\em Asian J. Math.}, 11(4):567--608, 2007.

\bibitem[Rus03]{russell:counting-singular-plane-curves-via-hilbert-schemes}
Heather Russell.
\newblock Counting singular plane curves via {H}ilbert schemes.
\newblock {\em Adv. Math.}, 179(1):38--58, 2003.

\bibitem[Rus04]{russell:degenerations-of-monomial-ideals}
Heather Russell.
\newblock Degenerations of monomial ideals.
\newblock {\em Math. Res. Lett.}, 11(2-3):231--249, 2004.

\bibitem[Sac80]{sacchiero:normal-bundles-of-rational-curves-in-projective-space}
Gianni Sacchiero.
\newblock Normal bundles of rational curves in projective space.
\newblock {\em Ann. Univ. Ferrara Sez. VII (N.S.)}, 26:33--40 (1981), 1980.

\bibitem[Ser06]{sernesi:deformations-of-algebraic-schemes}
Edoardo Sernesi.
\newblock {\em Deformations of algebraic schemes}, volume 334 of {\em
  Grundlehren der Mathematischen Wissenschaften [Fundamental Principles of
  Mathematical Sciences]}.
\newblock Springer-Verlag, Berlin, 2006.

\bibitem[Sha13]{shafarevich:basic-algebraic-geometry-1}
Igor~R. Shafarevich.
\newblock {\em Basic algebraic geometry. 1}.
\newblock Springer, Heidelberg, third edition, 2013.
\newblock Varieties in projective space.

\bibitem[Ste89]{stevens:on-the-number-of-points-determining-a-canonical-curve}
Jan Stevens.
\newblock On the number of points determining a canonical curve.
\newblock {\em Nederl. Akad. Wetensch. Indag. Math.}, 51(4):485--494, 1989.

\bibitem[Ste96]{stevens:on-the-computation-of-versal-deformations}
J.~Stevens.
\newblock On the computation of versal deformations.
\newblock {\em J. Math. Sci.}, 82(5):3713--3720, 1996.
\newblock Topology, 3.

\bibitem[Ste03]{stevens:deformations-of-singularities}
Jan Stevens.
\newblock {\em Deformations of singularities}, volume 1811 of {\em Lecture
  Notes in Mathematics}.
\newblock Springer-Verlag, Berlin, 2003.

\bibitem[Vak]{vakil:foundations-of-algebraic-geometry}
Ravi Vakil.
\newblock {\itshape MATH 216: Foundations of Algebraic Geometry}.

\end{thebibliography}

\bibliographystyle{alpha}

\end{document}